\documentclass[final,12p,times,reqno]{amsart}

\usepackage[colorlinks=true,linkcolor=black, citecolor=blue, urlcolor=blue]{hyperref}
\usepackage{amsfonts,latexsym}
\usepackage{mdwlist}  
\usepackage{amssymb}
\usepackage{amsmath, times}
\usepackage{gensymb}

\author[ R. Guedri]{Rihab Guedri}
\address{Rihab Guedri \\ Department of  Mathematics \\ Faculty of Sciences of Monastir \\  University of Monastir\\ Monastir-5000, Tunisia}
\email{rihabguedri096@gmail.com}

\author[ N. Attia]{Najmeddine Attia}
\address{Najmeddine Attia \\ Department of  Mathematics \\ Faculty of Sciences of Monastir \\  University of Monastir\\ Monastir-5000, Tunisia}
\email{najmeddine.attia@gmail.com}


\newcommand{\R}{\mathbb R}

\newcommand{\N}{\mathbb N}

\newcommand{\F}{{\mathcal F}}
\newcommand{\X}{{\mathbb X}}
\newcommand{\Y}{{\mathbb Y}}
\newcommand{\HHH}{{\mathcal H}}

\newcommand{\PPP}{{\mathcal P}}

\newcommand{\RRR}{{\mathcal R}}
\newcommand{\QQQ}{{\mathcal Q}}

\newcommand{\wt}{\widetilde}

\newtheorem{theorem}{Theorem}
\newtheorem*{theoremA}{Theorem A}
\newtheorem*{theoremB}{Theorem B}
\newtheorem*{theoremC}{Theorem C}
\newtheorem{lemma}{Lemma}
\newtheorem{proposition}{Proposition}
\newtheorem{corollary}{Corollary}

\newtheorem{definition}{Definition}
\newtheorem{remark}{Remark}

\usepackage{rotating}

\DeclareMathOperator{\supp}{supp} 
 \numberwithin{equation}{section}

\title[ On the  measures of product sets]{ A note on the generalized  Hausdorff and packing measures of product sets in metric space}

\begin{document}

\maketitle

\begin{abstract}
Let $\mu$ and $\nu$ be two 
 Borel probability measures  on  two separable metric spaces $\X$ and $\Y$ respectively. For $h, g$ be two Hausdorff functions and $q\in \R$,  we introduce and investigate  the generalized pseudo-packing measure ${\RRR}_{\mu}^{q, h}$ and the weighted  generalized  packing measure ${\QQQ}_{\mu}^{q, h}$ to give some product inequalities :
 $${\HHH}_{\mu\times \nu}^{q, hg}(E\times F) \le {\HHH}_{\mu}^{q, h}(E) \;  {\RRR}_{\nu}^{q, g}(F) \le {\RRR}_{\mu\times \nu}^{q, hg}(E\times F)$$
and
 $${\PPP}_{\mu\times \nu}^{q, hg}(E\times F) \le {\QQQ}_{\mu}^{q, h}(E) \;  {\PPP}_{\nu}^{q, g}(F)
$$
for all $E\subseteq \X$ and $F\subseteq \Y$, where ${\HHH}_{\mu}^{q, h}$ and ${\PPP}_{\mu}^{q, h}$ is the generalized Hausdorff and packing  measures respectively.  As an application, we prove that under appropriate geometric conditions, there exists a constant $c$ such that

$${\HHH}_{\mu\times \nu}^{q, hg}(E\times F)  \le  c\,  {\HHH}_{\mu}^{q, h}(E) \;  {\PPP}_{\nu}^{q, g}(F) $$ 
  
$${\HHH}_{\mu}^{q, h}(E) \;  {\PPP}_{\nu}^{q, g}(F) \le  c\,  {\PPP}_{\mu}^{q, hg}(E \times F)   $$ 
 
$${\PPP}_{\mu\times \nu}^{q, hg}(E\times F)  \le  c\,  {\PPP}_{\mu}^{q, h}(E) \;  {\PPP}_{\nu}^{q, g}(F). $$ 
These appropriate  inequalities are more refined  than well know results since we do no assumptions on $\mu, \nu, h$ and $g$. 


  \bigskip

\noindent{Keywords}: Generalized  Hausdorff and packing measures, weighted measure, product sets.

\bigskip
\noindent{Mathematics Subject Classification:} 28A78, 28A80.
\end{abstract}

\maketitle
\section{Introduction}
Let $\X$ and $\Y$ two separable metric spaces with metrics $\rho$ and $\rho'$ respectively and that the Cartesian product space $\X \times \Y =\left\lbrace (x,~y);~x \in \X,~y \in \Y \right\rbrace$ is given the metric $\rho \times \rho'$, defined by
$$\rho \times \rho' \Big( (x,~x'),~(y,~y') \Big) = \max \Big\{\rho(x,~x'), \rho'(y,~y')\Big\}.$$ For $\mu\in\mathcal{P}(\X)$, the family of  Borel probability measures on $\X$,  and $a>1$, we write
 $$
P_a(\mu)=\limsup_{r\searrow 0}\left( \sup_{x\in\supp\mu}
\displaystyle\frac{\mu\big(B(x, ar)\big)}{\mu\big(B(x, r)\big)}\right).
 $$
We will now say that the measure $\mu$ satisfies the doubling
condition if there exists $a>1$ such that $P_a(\mu)<\infty$. It is
easily seen that the exact value of the parameter $a$ is
unimportant : $P_a(\mu)<\infty$, for some  $a>1$ if and only if
$P_a(\mu)<\infty$, for all $a>1$. Also, we will write
$\mathcal{P}_D( \X) $ for the family of Borel probability
measures on $\X$ which satisfy the doubling condition.  \\

Let $\mathcal{F}$ denote the family of all Hausdorff functions, that is, the set of all  continuous, increasing functions $h$, defined for $r \geq 0$, with $h(0) = 0$ and $h(r) > 0$, for all $r>0$.  We will say that a Hausdorff function $h$ is of finite order if and only if $h$ satisfies
$$\limsup_{r\searrow 0} \displaystyle \frac{h(2r)}{h(r)} \leq \gamma,$$
for some constant $\gamma$. We denote by $\mathcal{F}_{0}$ the family of all Hausdorff function with finite order.\\

Let $q \in \R$, $h,g \in \F$,  $\mu\in\mathcal{P}(\X)$ and $\nu\in\mathcal{P}(\Y)$. Let $ {\HHH}_{\mu}^{q, h}$ and $ {\PPP}_{\nu}^{q, g}$ denote the generalized Hausdorff and packing measures respectively. When  $h(r) =r^t$, for some $t\ge0$, then $ {\HHH}_{\mu}^{q, h}$  (resp. $ {\PPP}_{\nu}^{q, h} )$ will simply  denoted by $ {\HHH}_{\mu}^{q, t}$ (resp. $ {\PPP}_{\nu}^{q, t} )$. These measures were first  introduced in \cite{Ol1} and then investigated by several authors.  In particular, in  \cite{Ol96} the author  proves that  there exists a number $ c > 0$ such that, for any $E\subseteq \R^n$ and $F\subseteq \R^m$, $n, m\ge 1$,
\begin{equation} \label{eq1}
{\HHH}_{\mu\times \nu}^{q, s+t}(E\times F) \le c\;\;  {\HHH}_{\mu}^{q, s}(E) \;  {\PPP}_{\nu}^{q, t}(F)
\end{equation}
\medskip
\begin{equation} \label{eq2}
{\HHH}_{\mu}^{q, s}(E) \;  {\PPP}_{\nu}^{q, t}(F) \le c\;\;  {\PPP}_{\mu\times \nu}^{q, s+t}(E\times F)
\end{equation}
\medskip
\begin{equation} \label{eq3}
{\PPP}_{\mu\times \nu}^{q, s+t}(E\times F) \le c\;\;  {\PPP}_{\mu}^{q, s}(E) \;  {\PPP}_{\nu}^{q, t}(F),
\end{equation}
\medskip

\noindent provided that $\mu$ and $\nu$ satisfy the doubling condition. When $q=0$,  the measures ${\HHH}_{\mu}^{q, t}$ and ${\PPP}_{\mu}^{q, t}$ do not depend on $\mu$ and they will be denoted by ${\HHH}^{ t}$ and ${\PPP}^{t}$ respectively. The corresponding dimension inequalities for products of these measures are established in \cite{Marstrand54, tricot82, howroyd96}, the reader can be referred also  to \cite{Attia23c, Jiang2017, wei16}. In this case $(q=0)$,    these  three inequalities  are  stated explicitly in \cite{howroyd96, Haase90a, Haase90b, Taylor94}. A special example,  when we take  $
 s=t=\log 2/\log 3$ and  $E=F$ be  the  middle third Cantor
 set, then \cite{Feng2000, Jia2003}
 $$
{\HHH}^{s}(E) \;  {\PPP}^{ t}(F) = 1\times 4^t < {\PPP}^{ s+t}(E\times F)= 4^{s+t} = {\PPP}^{ s}(E) \; {\PPP}^{ t}(F).
 $$
\medskip

 To prove these inequalities the authors in \cite{Ol96, Taylor94} managed to  construct a net measure equivalent to the packing measure and more easy to investigate.  This construction  is similar to that of packing measure   but it uses the class of all half-open Semi-dyadic cubes  in the definition rather than the class of all closed balls. Therefore, we must have doubling  condition to compare these measures and get the desire results. \\
 
In addition, we strong believe that  the inequalities
\eqref{eq1},  \eqref{eq2} and  \eqref{eq3}
 are true by considering a general Hauddorff function $h, g\in \F_0$, that is,
\begin{equation} \label{eq1h}
{\HHH}_{\mu\times \nu}^{q, hg}(E\times F) \le c  {\HHH}_{\mu}^{q, h}(E) \;  {\PPP}_{\nu}^{q, g}(F)
\end{equation}
\medskip
\begin{equation} \label{eq2h}
{\HHH}_{\mu}^{q, h}(E) \;  {\PPP}_{\nu}^{q, g}(F) \le c {\PPP}_{\mu\times \nu}^{q, hg}(E\times F)
\end{equation}
\medskip
\begin{equation} \label{eq3h}
{\PPP}_{\mu\times \nu}^{q, hg}(E\times F) \le c {\PPP}_{\mu}^{q, h}(E) \;  {\PPP}_{\nu}^{q, g}(F),
\end{equation}
 \noindent provided that $\mu$ and $\nu$ satisfy the doubling condition. Similar results were be proved for the Hewitt-Stromberg measures \cite{Attia21, Attia20} ( see \cite{Naj, Attia21a, Attia21b, Olsen18, Attia23a, Attia21/6} for  more details on these measures).\\ 

 
In  this paper we take the conventions $0^q = \infty$ for $q\leq0$ and $0 \times \infty =  0$. We will introduce and investigate the generalized pseudo-packing measure ${\RRR}_{\mu}^{q, h}$ to give some product inequalities similar to the above inequalities but  without any restriction on $h,g, \mu$ and $\nu$. More precisely, our purpose, in section~\ref{th1} is to prove the following theorem.

\medskip

\begin{theoremA}  \label{th_a}
For any $E\subseteq \X$, $F\subseteq \Y$, $\mu \in \mathcal{P}(\mathbb{X})$, $\nu \in \mathcal{P}(\mathbb{Y})$ and $h,g \in \F$,  we have,
 \begin{equation} \label{eq_th_a}
 {\HHH}_{\mu\times \nu}^{q, hg}(E\times F) \le {\HHH}_{\mu}^{q, h}(E) \;  {\RRR}_{\nu}^{q, g}(F) \le {\RRR}_{\mu\times \nu}^{q, hg}(E\times F),
 \end{equation}
provided that the product on the medium side is not of the form $0\times  \infty$ or $ \infty \times 0$.
\end{theoremA}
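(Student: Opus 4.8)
The plan is to treat the two halves of \eqref{eq_th_a} as separate one-sided estimates, both resting on the single geometric fact that makes the max-metric product tractable: a ball in $\X\times\Y$ splits as $B\big((x,y),r\big)=B(x,r)\times B(y,r)$, so that $(\mu\times\nu)\big(B((x,y),r)\big)=\mu\big(B(x,r)\big)\,\nu\big(B(y,r)\big)$ and hence the weight factorises,
\begin{equation*}
(\mu\times\nu)\big(B((x,y),r)\big)^q\,(hg)(2r)=\Big(\mu\big(B(x,r)\big)^q h(2r)\Big)\Big(\nu\big(B(y,r)\big)^q g(2r)\Big).
\end{equation*}
I would work with the finite-scale approximants $\overline{\HHH}{}_{\mu,\delta}^{q,h}$ and $\overline{\RRR}{}_{\mu,\delta}^{q,g}$ underlying the definitions of ${\HHH}_{\mu}^{q,h}$ and ${\RRR}_{\mu}^{q,g}$, passing to the limit $\delta\searrow 0$ only at the very end; the conventions $0^q=\infty$ and $0\times\infty=0$ are invoked precisely to make the factorisation above legitimate, while the excluded forms $0\times\infty$ and $\infty\times 0$ on the medium side are exactly the cases in which this bookkeeping would be ambiguous.

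For the left inequality ${\HHH}_{\mu\times\nu}^{q,hg}(E\times F)\le{\HHH}_{\mu}^{q,h}(E)\,{\RRR}_{\nu}^{q,g}(F)$, an upper bound on a covering quantity, I would manufacture an efficient centered cover of $E\times F$ from a near-optimal cover of $E$ with, superimposed on it, covers of $F$ at matching radii. Fix $\delta,\varepsilon>0$, choose a centered $\delta$-cover $\{B(x_i,r_i)\}_i$ of $E$ with $\sum_i\mu(B(x_i,r_i))^q h(2r_i)\le\overline{\HHH}{}_{\mu,\delta}^{q,h}(E)+\varepsilon$, and for each $i$ choose \emph{any} centered cover $\{B(y_{i,k},r_i)\}_k$ of $F$ by $\Y$-balls of the \emph{same} radius $r_i$. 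Then $\{B((x_i,y_{i,k}),r_i)\}_{i,k}$ is a centered $\delta$-cover of $E\times F$, and by the factorisation its cost equals $\sum_i\big(\mu(B(x_i,r_i))^q h(2r_i)\big)\big(\sum_k\nu(B(y_{i,k},r_i))^q g(2r_i)\big)$. The inner sum is a covering sum of $F$ at a scale $\le\delta$, hence is dominated by the supremum over all such covers, i.e.\ by $\overline{\RRR}{}_{\nu,\delta}^{q,g}(F)$; this is the step where the \emph{sup-over-covers} structure of ${\RRR}$ enters and is exactly what dispenses with any doubling hypothesis. One obtains $\overline{\HHH}{}_{\mu\times\nu,\delta}^{q,hg}(E\times F)\le\big({\HHH}_{\mu}^{q,h}(E)+\varepsilon\big)\,\overline{\RRR}{}_{\nu,\delta}^{q,g}(F)$, and letting $\delta\searrow 0$ (so $\overline{\RRR}{}_{\nu,\delta}^{q,g}(F)\searrow{\RRR}_{\nu}^{q,g}(F)$) and then $\varepsilon\searrow 0$ gives the claim; should ${\HHH}$ carry the usual regularisation by subsets, the same cover applied to the projections of an arbitrary $A\subseteq E\times F$ supplies the bound for each such $A$ and one takes the supremum.

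For the right inequality ${\HHH}_{\mu}^{q,h}(E)\,{\RRR}_{\nu}^{q,g}(F)\le{\RRR}_{\mu\times\nu}^{q,hg}(E\times F)$, a lower bound on a sup-over-covers quantity, I would run the construction in reverse and exhibit a single \emph{wasteful} cover of the product. Fix $\delta$; since ${\RRR}_{\nu}^{q,g}(F)\le\overline{\RRR}{}_{\nu,\delta}^{q,g}(F)$, pick a centered cover $\{B(y_k,s_k)\}_k$ of $F$ with $s_k\le\delta$ and $\sum_k\nu(B(y_k,s_k))^q g(2s_k)\ge{\RRR}_{\nu}^{q,g}(F)-\varepsilon$, and for each $k$ superimpose a centered cover $\{B(x_{k,j},s_k)\}_j$ of $E$ by $\X$-balls of radius $s_k$. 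Again $\{B((x_{k,j},y_k),s_k)\}_{k,j}$ is a centered $\delta$-cover of $E\times F$, with cost $\sum_k\nu(B(y_k,s_k))^q g(2s_k)\big(\sum_j\mu(B(x_{k,j},s_k))^q h(2s_k)\big)$. Here the crucial asymmetry appears: every centered cover of $E$ at scale $s_k\le\delta$ has cost \emph{at least} $\overline{\HHH}{}_{\mu,\delta}^{q,h}(E)$, being bounded below by the defining infimum, so the product cover has cost $\ge\overline{\HHH}{}_{\mu,\delta}^{q,h}(E)\big({\RRR}_{\nu}^{q,g}(F)-\varepsilon\big)$, whence $\overline{\RRR}{}_{\mu\times\nu,\delta}^{q,hg}(E\times F)\ge\overline{\HHH}{}_{\mu,\delta}^{q,h}(E)\big({\RRR}_{\nu}^{q,g}(F)-\varepsilon\big)$. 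Letting $\delta\searrow 0$ (the infimum defining ${\RRR}_{\mu\times\nu}^{q,hg}$ acting on the left, the increasing approximants $\overline{\HHH}{}_{\mu,\delta}^{q,h}(E)$ climbing to $\overline{\HHH}{}_{\mu}^{q,h}(E)\ge{\HHH}_{\mu}^{q,h}(E)$ on the right) and then $\varepsilon\searrow 0$ closes the estimate.

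The main obstacle, and the point needing care, is that the single equal-radius product ball forces the same scale on both factors while the two inequalities pull in opposite directions --- an \emph{infimal} (covering) extremal problem for the ${\HHH}$-factor and a \emph{supremal} one for the ${\RRR}$-factor --- so the argument hinges on interleaving ``$\le\sup$'' and ``$\ge\inf$'' at the matched scale $r_i$ (resp.\ $s_k$) and on commuting these one-sided bounds with the limits $\delta\searrow 0$. Since the $\overline{\HHH}$-approximants increase and the $\overline{\RRR}$-approximants decrease as $\delta\searrow 0$, the limits must be taken in the correct order, and at each stage one must check that no factor of the excluded forms $0\times\infty$ or $\infty\times 0$ is produced. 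It is precisely the replacement of the genuine packing measure by the pseudo-packing measure ${\RRR}$ --- built from covers rather than disjoint packings --- that removes the Vitali/doubling step of \cite{Ol96,Taylor94} and yields \eqref{eq_th_a} with constant $1$ and no hypothesis on $\mu,\nu,h,g$.
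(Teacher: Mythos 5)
Your proposal founders on a misreading of the definition of ${\RRR}$: it is not a ``sup-over-covers'' quantity. By definition ${\RRR}^{q,g}_{\nu,\delta}(F)$ is a supremum over $\delta$-\emph{pseudo-packings}, i.e.\ families $(y_k,r_k)_k$ with $y_k\in F$ and $\rho'(y_k,y_l)>\max(r_k,r_l)$ for $k\neq l$. An arbitrary centered cover is not a pseudo-packing, so in your left inequality the inner sum $\sum_k\nu\big(B(y_{i,k},r_i)\big)^q g(2r_i)$ over ``any centered cover'' of $F$ is \emph{not} dominated by ${\RRR}^{q,g}_{\nu,\delta}(F)$ (a highly redundant cover makes that sum arbitrarily large while ${\RRR}^{q,g}_{\nu,\delta}(F)$ stays fixed); and in your right inequality, exhibiting a ``wasteful cover'' of $E\times F$ gives no lower bound at all for ${\RRR}^{q,hg}_{\mu\times\nu,\delta}(E\times F)$, since the supremum there only sees pseudo-packings. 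Both steps can be repaired, and the repair is exactly the paper's key device (Lemma \ref{maxi}, due to Howroyd): at a \emph{fixed} radius one can choose a \emph{maximal} pseudo-packing $(y_k,r_i)_k$ of the set, and maximality forces it to cover the set. Such a family is simultaneously admissible for the supremum defining ${\RRR}_{\nu,\delta}$ (rescuing your first inequality) and a centered cover whose cost is at least the covering infimum ${\HHH}_{\nu,\delta}$ (rescuing, fiberwise, the second); your matched-radius product construction is then sound, and is indeed the same skeleton the paper uses.

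The second gap is structural: you work only with the pre-measures and the limit $\delta\searrow0$, but the measures carry a second-stage construction, ${\RRR}^{q,h}_{\mu}(E)=\inf\big\{\sum_i{\RRR}^{q,h}_{\mu,0}(E_i):E\subseteq\bigcup_iE_i\big\}$, an infimum over countable decompositions into \emph{arbitrary} sets. In the left inequality your limit produces ${\HHH}^{q,h}_\mu(E)\,{\RRR}^{q,g}_{\nu,0}(F)$, and ${\RRR}^{q,g}_{\nu,0}(F)\ge{\RRR}^{q,g}_{\nu}(F)$ --- the wrong direction; the paper fixes this by first choosing a decomposition $F\subseteq\bigcup_jF_j$ nearly optimal for the measure, running the pre-measure estimate on each $F_j$, and also handling the supremum over subsets built into ${\HHH}$ via the auxiliary subsets $\wt E\subseteq E$, $\wt F_j\subseteq F_j$. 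In the right inequality the omission is fatal rather than cosmetic: to lower-bound the \emph{measure} ${\RRR}^{q,hg}_{\mu\times\nu}(E\times F)$ one must control every countable cover of $E\times F$ by sets $\Gamma_i$ that need not be product sets. This is the actual heart of the paper's proof: Lemma \ref{projection} takes an arbitrary cover $(\Gamma_i)_i$, introduces $E_n=\big\{x\in E:\sum_{i=1}^n{\HHH}^{q,g}_{\nu,\delta}(\Gamma_i^F(x))\ge\alpha\big\}$ built from the fibers $\Gamma_i^F(x)$, covers each fiber by a maximal pseudo-packing at the radius $r_j$ coming from a pseudo-packing of $E_n$, assembles these into pseudo-packings of the $\Gamma_i$ themselves, and lets $E_n\nearrow E$. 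Nothing in your proposal addresses non-product covers, so even after correcting the cover/packing confusion your argument yields only a pre-measure inequality, not the stated one.
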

\hspace{0.4cm}
In general, we have $ \PPP_\mu^{q,h}\le \RRR_\mu^{q,h}.$ Then the first inequality of \eqref{eq_th_a} is more refined than \eqref{eq1h} since we do no assumptions on $\mu, \nu, h$ and $g$. Therefore it is worth to compute sufficient condition to get the equivalence between $\PPP_\mu^{q,h}$  and $\RRR_\mu^{q,h}$, that is, 
$$\RRR_\mu^{q,h} \le \alpha \PPP_\mu^{q,h}, \hspace{0.5cm} (\alpha >0)$$
In this case, the inequality \eqref{eq2h} can be deduced from \eqref{eq_th_a}. It is well known that $\PPP_\mu^{q,h}$ and $\RRR_\mu^{q,h}$ are not equivalent in general \cite{wen2006}. As mentioned above, we have in general $ \PPP_\mu^{q,h}\le \RRR_\mu^{q,h}$. We will construct, in section \ref{Prel}, a compact, separable and totally disconnected metric space  $\mathcal{ X} $ to prove that
$\PPP_\mu^{q,h}(\mathcal{ X})< \RRR_\mu^{q,h}(\mathcal{ X})$ (Theorem \ref{P-neq-R}). This construction is due to Davies \cite{Davis71}  to prove that there  exists a compact metric space, and two distinct probability
Borel measures $\nu$ and  $\mu$ with $\mu(B) = \nu(B)$ for every closed ball $B$. This ultrametric product space $\mathcal{ X}$ was also considered later by others authors, for example in \cite{Edgar07} to prove that strong vitali property fails in general metric space (see section  \ref{application} for the definition of the strong vitali property).  
\\


   In section \ref{application}, we will modify slightly the construction of the  pseudo-packing $h$-measure $\RRR_\mu^{q,h}$ to obtain new fractal measure $r_\mu^{q,h}$ equal to $\PPP_\mu^{q,h}$, in a general metric space which satisfy some appropriate geometric conditions and without any restriction on $h$ and $\mu$.  This new measure is obtained by using the class of all pseudo-packing  such that the intersection of any two balls  of them contains no point of $E$. More precisely, our aim is to prove the following theorem  (see Definition \ref{amenable} for the definition of amenable to packing).

\begin{theoremB} \label{P-r}
Assume that $\X$ is amenable to packing and suppose that every finite Borel measure on $\X$ satisfies the strong-Vitali property. Then, for any $E \subseteq \X$, $\mu \in \mathcal{P}(\X)$ and $h \in \mathcal{F}$, we have,
\begin{equation*}
\PPP_{\mu}^{q,h}(E) = r_{\mu}^{q,h}(E).
\end{equation*}

\end{theoremB}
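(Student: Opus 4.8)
The plan is to prove the equality through the two inequalities $\PPP_\mu^{q,h}(E)\le r_\mu^{q,h}(E)$ and $r_\mu^{q,h}(E)\le\PPP_\mu^{q,h}(E)$ separately. The first is the routine direction and comes from an inclusion of admissible configurations: every centered packing of $E$ by pairwise disjoint closed balls is, a fortiori, a pseudo-packing any two of whose balls meet in a set disjoint from $E$, so every family admissible in the definition of $\PPP_\mu^{q,h}$ is admissible in that of $r_\mu^{q,h}$. Consequently the associated premeasures satisfy $\overline{\PPP}_\mu^{q,h}(A)\le\overline{r}_\mu^{q,h}(A)$ for every $A$, and since both $\PPP_\mu^{q,h}$ and $r_\mu^{q,h}$ are obtained from their premeasures by the same regularization $\inf\{\sum_n(\cdot)(E_n):E=\bigcup_nE_n\}$, the inequality $\PPP_\mu^{q,h}(E)\le r_\mu^{q,h}(E)$ follows at once.

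For the reverse inequality I would exploit the common regularization to reduce matters to the premeasure comparison $\overline{r}_\mu^{q,h}(A)\le\overline{\PPP}_\mu^{q,h}(A)$ for an arbitrary $A\subseteq E$, which together with the easy bound would force $\overline{r}_\mu^{q,h}=\overline{\PPP}_\mu^{q,h}$ and hence the stated identity. Fix $\delta>0$ and an $A$-disjoint $\delta$-pseudo-packing $\{B(x_i,r_i)\}_i$ whose weighted sum $\sum_i\mu(B(x_i,r_i))^q h(2r_i)$ is close to $\overline{r}_{\mu,\delta}^{q,h}(A)$. The defining condition forces the traces $A\cap B(x_i,r_i)$ to be pairwise disjoint, so on $A$ the family behaves like a packing and its only defect is overlap occurring \emph{off} $A$. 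The closed balls centred in $A$ of arbitrarily small radius form a Vitali class, so the strong-Vitali property produces a countable, genuinely disjoint subfamily covering $\mu$-almost all of $A$, while amenability to packing (Definition \ref{amenable}) is what lets this disjoint family be chosen compatibly with the given pseudo-packing and its weights. The crux is then to check that the overlap allowed off $A$ cannot inflate the weighted sum past what a genuine packing achieves: the set of points of $A$ left multiply covered after disjointification is $\mu$-null, and by the conventions $0^q=\infty$ (for $q\le0$) and $0\times\infty=0$ its contribution is annihilated. Letting $\delta\searrow0$ and then passing to the infimum over decompositions yields $r_\mu^{q,h}(E)\le\PPP_\mu^{q,h}(E)$.

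The main obstacle is precisely this disjointification step: dominating a pseudo-packing whose balls may overlap arbitrarily away from $A$ by a genuinely disjoint packing of no smaller weight. This is exactly where the Davies–Edgar ultrametric space of Theorem \ref{P-neq-R} fails—there the strong-Vitali property is violated and $\PPP_\mu^{q,h}<\RRR_\mu^{q,h}$—so both hypotheses are essential: the strong-Vitali property to extract a disjoint subfamily of full $\mu$-measure, and amenability to packing to keep the selection compatible with the weights $\mu(B(x_i,r_i))^q h(2r_i)$. Two technical points must be watched throughout. First, the cases $q\le0$ and $q>0$ should be treated separately, since the effect of the $\mu$-null exceptional set on the weighted sum is dictated by the conventions on $0^q$ and $0\times\infty$. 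Second, the interchange of the limit $\delta\searrow0$ with the Vitali selection must be justified by fixing $\delta$ first and passing to the limit only at the end.
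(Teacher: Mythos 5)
Your first inequality is correct and is exactly the paper's: a $\delta$-packing satisfies $\rho(x_i,x_j)>r_i+r_j$, hence its balls are disjoint and it is a fortiori a $\delta$-weak-pseudo-packing, so $\PPP^{q,h}_{\mu,\delta}\le r^{q,h}_{\mu,\delta}$ and the inequality survives the common regularization. The reverse direction, however, has a genuine gap at precisely the step you call the crux. The strong-Vitali property cannot disjointify a given $\delta$-weak-pseudo-packing: it applies to \emph{fine covers}, i.e.\ families containing, at each point of the set, balls of arbitrarily small radius, whereas your family lives at the single scale $\le\delta$ you have fixed, so no disjoint subfamily ``compatible with the given weights'' can be extracted from it. Your null-set bookkeeping is also against the wrong measure: what must be controlled enters through the weights $\mu(B(x_i,r_i))^q h(2r_i)$ and through $\PPP^{q,h}_{\mu}$, not through $\mu$, so a $\mu$-null exceptional set annihilates nothing, and the conventions $0^q=\infty$ and $0\times\infty=0$ are beside the point here. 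Amenability to packing does not mediate any ``compatible selection'' either; its sole function is a multiplicity bound (at most $K$ balls of a pseudo-packing contain a common point). Finally, the reduction you aim for --- the premeasure identity $r^{q,h}_{\mu,0}=\PPP^{q,h}_{\mu,0}$ on arbitrary sets --- is strictly stronger than the theorem: the hypotheses yield only $\PPP^{q,h}_{\mu,\delta}\le r^{q,h}_{\mu,\delta}\le K\,\PPP^{q,h}_{\mu,\delta}$ via \eqref{P-R2}, and nothing in your sketch removes the constant $K$ at the premeasure level; the paper obtains the comparison only for the measures, after a decomposition.

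The missing idea is the density theorem of Section \ref{density}. From $r^{q,h}_{\mu}\le\RRR^{q,h}_{\mu}\le K\PPP^{q,h}_{\mu}$ one settles the cases $\PPP^{q,h}_{\mu}(E)\in\{0,\infty\}$ and may assume $\PPP^{q,h}_{\mu}(E)<\infty$; setting $\nu={\PPP^{q,h}_{\mu}}_{\llcorner E}$, the strong-Vitali hypothesis gives, via Theorem \ref{densite} and Corollary \ref{c1}, $\underline{D}^{q,h}_{\mu}(x,\nu)=1$ for $\PPP^{q,h}_{\mu}$-a.a.\ $x\in E$ --- this is where Vitali is actually used, inside the density theorem applied to a genuine fine cover, not to your fixed packing. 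For $\beta<1$ one then defines $G_k=\big\{x\in E:\ \PPP^{q,h}_{\mu}(E\cap B(x,r))\ge\beta\,\mu(B(x,r))^q h(2r)\ \text{for all}\ r\le 1/k\big\}$ and $G'_k=E\setminus G_k$, so that $\PPP^{q,h}_{\mu}(G'_k)\to0$. For any $1/k$-weak-pseudo-packing of $G_k$ the traces $G_k\cap B(x_i,r_i)$ are pairwise disjoint by the clause $B(x_i,r_i)\cap B(x_j,r_j)\cap G_k=\emptyset$ built into the definition of $r^{q,h}_{\mu}$ --- it is this clause, not a Vitali selection, that makes the family behave like a packing on the set --- whence $\beta\sum_i\mu(B(x_i,r_i))^q h(2r_i)\le\PPP^{q,h}_{\mu}(G_k)+K\,\PPP^{q,h}_{\mu}(G'_k)$, the factor $K$ coming from amenability because the traces on $G'_k$ may overlap. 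Letting $k\to\infty$ and then $\beta\to1$ gives $r^{q,h}_{\mu}(E)\le\PPP^{q,h}_{\mu}(E)$. None of these three ingredients (finiteness reduction, almost-everywhere density one, $G_k$ decomposition with the error term $K\,\PPP^{q,h}_{\mu}(G'_k)$) appears in your outline, and without them the disjointification step you rely on has no proof.
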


Similarly, we may prove more refined  result than \eqref{eq3h} 
by considering the pseudo-packing measure. More precisely, we have, 
\begin{equation*} 
{\PPP}_{\mu\times \nu}^{q, hg}(E\times F) \le {\RRR}_{\mu}^{q, h}(E) \;  {\PPP}_{\nu}^{q, g}(F), 
\end{equation*}
for any $E\subseteq \X$, $F\subseteq \Y$, $\mu \in \mathcal{P}(\mathbb{X})$, $\nu \in \mathcal{P}(\mathbb{Y})$ and $h,g \in \F$. 
In section \ref{Prel}, we  introduce  the weighted  generalized  packing measure ${\QQQ}_{\mu}^{q, h}\le {\RRR}_{\mu}^{q, h}$ and we will prove, in section \ref{application} the following result. 


\begin{theoremC} \label{thC}
For any $E\subseteq \X$, $F\subseteq \Y$, $\mu \in \mathcal{P}(\mathbb{X})$, $\nu \in \mathcal{P}(\mathbb{Y})$ and $h,g \in \F$,  we have,
\begin{equation*} \label{eq_thC}
{\PPP}_{\mu\times \nu}^{q, hg}(E\times F) \le {\QQQ}_{\mu}^{q, h}(E) \;  {\PPP}_{\nu}^{q, g}(F) , 
\end{equation*}
provided that the product on the right-hand side is not of the form $0\times  \infty$ or $ \infty \times 0$.
\end{theoremC}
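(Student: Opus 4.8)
The plan is to prove the inequality first at the level of the packing premeasures and then transfer it to the regularized measures by a standard countable decomposition. Writing $\overline{\PPP}_\nu^{q,g}$ and $\overline{\QQQ}_\mu^{q,h}$ for the premeasures underlying $\PPP_\nu^{q,g}$ and $\QQQ_\mu^{q,h}$ from Section~\ref{Prel}, recall that the admissible configurations for $\overline{\QQQ}_\mu^{q,h}$ are weighted packings $\{(B(x_i,r_i),c_i)\}$ with centres $x_i\in E$, radii $r_i\le\delta$, and weights obeying the pointwise constraint $\sum_i c_i\,\mathbf 1_{B(x_i,r_i)}\le 1$, the premeasure being the ($\delta\to0$ limit of the) supremum of $\sum_i c_i\,\mu(B(x_i,r_i))^q\,h(2r_i)$. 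The first target is therefore the premeasure estimate
\begin{equation*}
\overline{\PPP}_{\mu\times\nu}^{q,hg}(E\times F)\le \overline{\QQQ}_\mu^{q,h}(E)\;\overline{\PPP}_\nu^{q,g}(F).
\end{equation*}
Granting this, I would finish as follows. For arbitrary countable covers $E\subseteq\bigcup_n E_n$ and $F\subseteq\bigcup_m F_m$, the family $\{E_n\times F_m\}_{n,m}$ covers $E\times F$, so countable subadditivity of the measure $\PPP_{\mu\times\nu}^{q,hg}$, the bound $\PPP\le\overline{\PPP}$, and the premeasure estimate applied termwise give
\begin{equation*}
\PPP_{\mu\times\nu}^{q,hg}(E\times F)\le\sum_{n,m}\overline{\PPP}_{\mu\times\nu}^{q,hg}(E_n\times F_m)\le\Big(\sum_n\overline{\QQQ}_\mu^{q,h}(E_n)\Big)\Big(\sum_m\overline{\PPP}_\nu^{q,g}(F_m)\Big),
\end{equation*}
and taking the infimum over the two families of covers separately yields $\PPP_{\mu\times\nu}^{q,hg}(E\times F)\le\QQQ_\mu^{q,h}(E)\,\PPP_\nu^{q,g}(F)$.

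The heart of the matter is the premeasure estimate, which rests on a fibrewise packing observation. Since the product metric is the maximum metric, every closed ball factorizes as $B\big((x,y),r\big)=B(x,r)\times B(y,r)$ with $(\mu\times\nu)\big(B((x,y),r)\big)=\mu(B(x,r))\,\nu(B(y,r))$, so any disjoint $\delta$-packing $\{B((x_i,y_i),r_i)\}$ of $E\times F$ produces the sum
\begin{equation*}
\Sigma=\sum_i\mu(B(x_i,r_i))^q\,\nu(B(y_i,r_i))^q\,h(2r_i)\,g(2r_i).
\end{equation*}
The decisive point is that for each fixed $x\in E$ the balls $\{B(y_i,r_i):x\in B(x_i,r_i)\}$ are pairwise disjoint: if $x$ lies in both $B(x_i,r_i)$ and $B(x_j,r_j)$ then these $\X$-balls meet, so disjointness of the product balls forces $B(y_i,r_i)\cap B(y_j,r_j)=\emptyset$. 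Hence for fixed $x$ these $\Y$-balls form a $\delta$-packing of $F$, whence $\sum_{i:\,x\in B(x_i,r_i)}\nu(B(y_i,r_i))^q\,g(2r_i)\le\overline{\PPP}_{\nu,\delta}^{q,g}(F)=:N_\delta$. Setting $c_i=\nu(B(y_i,r_i))^q\,g(2r_i)/N_\delta$, this fibrewise bound says precisely that $\{(B(x_i,r_i),c_i)\}$ satisfies $\sum_i c_i\,\mathbf 1_{B(x_i,r_i)}\le 1$, so it is a legitimate weighted packing of $E$ and
\begin{equation*}
\Sigma=N_\delta\sum_i c_i\,\mu(B(x_i,r_i))^q\,h(2r_i)\le N_\delta\;\overline{\QQQ}_{\mu,\delta}^{q,h}(E).
\end{equation*}
Taking the supremum over packings of $E\times F$ at scale $\delta$ and then letting $\delta\to0$, using that all three scale-$\delta$ premeasures are monotone non-increasing as $\delta\searrow0$, delivers the premeasure estimate. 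It is exactly the weighting built into $\QQQ_\mu^{q,h}$ that compensates for the fact that the $\X$-projections of a product packing need not be disjoint.

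The step I expect to require the most care is the degenerate behaviour of the normalizing factor $N_\delta$, which is where the conventions $0^q=\infty$, $0\times\infty=0$, and the hypothesis excluding products of the form $0\times\infty$ and $\infty\times0$ come into play. When $N_\delta=\infty$ the normalization is unavailable, but then $\overline{\PPP}_\nu^{q,g}(F)=\infty$ and, the excluded cases being ruled out, the right-hand side is infinite and there is nothing to prove; when $N_\delta=0$ the fibrewise bound forces every factor $\nu(B(y_i,r_i))^q\,g(2r_i)$ to vanish, so $\Sigma=0$. I would also verify that singularities arising from centres with $\nu(B(y_i,r_i))=0$ when $q\le0$ are consistently absorbed by the convention $0^q=\infty$ on both sides, and that the product of the two monotone $\delta$-limits is genuinely the limit of the products (again barring the excluded $0\times\infty$ situations). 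Apart from this bookkeeping the argument is a clean fibrewise reduction followed by the decomposition step.
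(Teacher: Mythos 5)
Your proposal is correct and takes essentially the same route as the paper: the $\delta$-level premeasure estimate via the fibrewise observation that the $\Y$-components of a product packing over a fixed $x\in E$ form a $\delta$-packing of $F$, normalization of the $\Y$-masses by $\PPP_{\nu,\delta}^{q,g}(F)$ to turn the $\X$-projections into a weighted $\delta$-packing of $E$ (the paper's $(b_i/N, x_i, r_i)$ device, with its finite truncations $a_i, b_i$, is exactly this normalization plus bookkeeping for infinite terms, and your explicit treatment of the degenerate cases $N_\delta\in\{0,\infty\}$ is if anything more careful), followed by the same countable-cover passage from premeasures to measures. One small caveat: the paper defines $\delta$-packings by the separation $\rho(x_i,x_j)>r_i+r_j$ rather than by disjointness of balls, so in the fibre step you should argue that $\rho(x_i,x)\le r_i$, $\rho(x_j,x)\le r_j$ and $\max\bigl(\rho(x_i,x_j),\rho'(y_i,y_j)\bigr)>r_i+r_j$ force $\rho'(y_i,y_j)>r_i+r_j$ --- phrased purely via disjointness you would only bound the relative-packing premeasure $\widetilde\PPP_{\nu}^{q,g}$ of the paper's Remark~1 on the right instead of $\PPP_{\nu}^{q,g}$; with the paper's definition your argument goes through verbatim.
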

Now, we are able to give an interesting application of our study which is a consequence from Theorems~A and C. More precisely, we will prove that under appropriate geometric conditions, the inequalities \eqref{eq1h}, \eqref{eq2h} and \eqref{eq3h} remain true without any restriction on $h,g, \mu$ and $\nu$. In other words, we have the following corollary.
\begin{corollary}
Let $E\subseteq \X$, $F\subseteq \Y$, $\mu \in \mathcal{P}(\mathbb{X})$, $\nu \in \mathcal{P}(\mathbb{Y})$ and $h,g \in \F$. Assume that $\X$ and $\Y$ are amenable to packing, then there exist a constant $c>0$ such that
$${\HHH}_{\mu\times \nu}^{q, hg}(E\times F) \le c {\HHH}_{\mu}^{q, h}(E) \;  {\PPP}_{\nu}^{q, g}(F)$$  
\medskip \vspace{-0.5cm}
$${\HHH}_{\mu}^{q, h}(E) \;  {\PPP}_{\nu}^{q, g}(F) \le c {\PPP}_{\mu}^{q, hg}(E \times F)  $$  
\medskip \vspace{-0.5cm}
$${\PPP}_{\mu\times \nu}^{q, hg}(E\times F) \le c {\PPP}_{\mu}^{q, h}(E) \;  {\PPP}_{\nu}^{q, g}(F),$$  
 
\noindent provided that the product on the right-hand side of the first and the last inequalities is not of the form $0\times  \infty$ or $ \infty \times 0$.
\end{corollary}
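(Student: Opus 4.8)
The plan is to derive all three inequalities from a single geometric comparison fed into Theorems~A and~C. The crucial ingredient, and the only place the hypothesis ``amenable to packing'' (Definition~\ref{amenable}) is used, is the estimate
\begin{equation}\label{eqCor_key}
\RRR_{\mu}^{q,h}(A)\le c\,\PPP_{\mu}^{q,h}(A),
\end{equation}
valid for every $A$, every measure and every $h\in\F$, with a constant $c$ depending only on the ambient space. Combined with the two unconditional bounds $\PPP_{\mu}^{q,h}\le\RRR_{\mu}^{q,h}$ and $\QQQ_{\mu}^{q,h}\le\RRR_{\mu}^{q,h}$ recalled in the introduction, \eqref{eqCor_key} makes the pseudo-packing and weighted packing measures comparable to the genuine packing measure, so that the $\RRR$ and $\QQQ$ factors produced by Theorems~A and~C may be replaced by $\PPP$ at the cost of the factor $c$. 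I would also record that if $\X$ and $\Y$ are amenable to packing then so is $\X\times\Y$, with a constant built from the two individual ones; this is what allows \eqref{eqCor_key} to be applied on the product space.

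For the first inequality I would start from the left-hand part of \eqref{eq_th_a}, namely $\HHH_{\mu\times\nu}^{q,hg}(E\times F)\le\HHH_{\mu}^{q,h}(E)\,\RRR_{\nu}^{q,g}(F)$, and apply \eqref{eqCor_key} on $\Y$ to bound $\RRR_{\nu}^{q,g}(F)\le c\,\PPP_{\nu}^{q,g}(F)$; monotonicity of the product then yields $\HHH_{\mu\times\nu}^{q,hg}(E\times F)\le c\,\HHH_{\mu}^{q,h}(E)\,\PPP_{\nu}^{q,g}(F)$. For the third inequality I would use Theorem~C, $\PPP_{\mu\times\nu}^{q,hg}(E\times F)\le\QQQ_{\mu}^{q,h}(E)\,\PPP_{\nu}^{q,g}(F)$, and estimate $\QQQ_{\mu}^{q,h}(E)\le\RRR_{\mu}^{q,h}(E)\le c\,\PPP_{\mu}^{q,h}(E)$ via \eqref{eqCor_key} on $\X$. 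For the middle inequality I would invoke the right-hand part of \eqref{eq_th_a}, $\HHH_{\mu}^{q,h}(E)\,\RRR_{\nu}^{q,g}(F)\le\RRR_{\mu\times\nu}^{q,hg}(E\times F)$; since $\PPP_{\nu}^{q,g}(F)\le\RRR_{\nu}^{q,g}(F)$ the left side dominates $\HHH_{\mu}^{q,h}(E)\,\PPP_{\nu}^{q,g}(F)$, while \eqref{eqCor_key} applied on the amenable space $\X\times\Y$ gives $\RRR_{\mu\times\nu}^{q,hg}(E\times F)\le c\,\PPP_{\mu\times\nu}^{q,hg}(E\times F)$, so the chain closes.

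The care needed throughout is with the conventions $0^{q}=\infty$ and $0\times\infty=0$ and the excluded forms $0\times\infty$ and $\infty\times0$. Here \eqref{eqCor_key} does double duty: when $\PPP_{\nu}^{q,g}(F)$ is finite it forces $\RRR_{\nu}^{q,g}(F)$ to be finite too, so the provisos of Theorems~A and~C transfer verbatim to the products in which $\RRR$ has been replaced by $\PPP$; a short case analysis on $\HHH_{\mu}^{q,h}(E)\in\{0\}$, $(0,\infty)$, $\{\infty\}$ then shows the stated exceptions are precisely those inherited from Theorems~A and~C, and that the middle inequality needs no proviso because its left-hand product reduces to $0$ in every degenerate case. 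I expect the genuine difficulty to lie entirely in \eqref{eqCor_key}: proving it amounts to thinning an arbitrary admissible pseudo-packing into a pairwise disjoint packing while keeping the packing sum comparable, and it is exactly this refinement, together with its stability under Cartesian products, that the amenable-to-packing condition is designed to guarantee through a Besicovitch/Vitali-type selection of bounded multiplicity.
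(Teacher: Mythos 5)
Your argument is exactly the paper's intended derivation: the corollary is stated there as a direct consequence of Theorems A and C, combined with precisely your key estimate --- which the paper has already proved as the proposition that amenability to packing yields $\RRR_{\mu}^{q,h}\le K\,\PPP_{\mu}^{q,h}$ --- and with the chain $\PPP_{\mu}^{q,h}\le \QQQ_{\mu}^{q,h}\le \RRR_{\mu}^{q,h}$ of Theorem \ref{ralation_R-Q}, so the ``genuine difficulty'' you anticipate is already settled in the paper and your case analysis of the $0\times\infty$ provisos is sound. The one assertion you record without proof, that $\X\times\Y$ inherits amenability (needed to bound $\RRR_{\mu\times\nu}^{q,hg}(E\times F)$ by $\PPP_{\mu\times\nu}^{q,hg}(E\times F)$ in the middle inequality), is likewise left implicit by the paper but is true: if the balls of a product pseudo-packing share a common point, color each pair $i\ne j$ according to the coordinate in which the distance exceeds $\max(r_i,r_j)$; a monochromatic set of indices gives a pseudo-packing in one factor whose balls meet a common point, hence has size at most $K_{\X}$ or $K_{\Y}$, and Ramsey's theorem then bounds the total number of balls by a constant depending only on $K_{\X}$ and $K_{\Y}$.
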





\section{Generalized fractal measures}\label{Prel}

\subsection{Generalized  packing   $h$-measures }
 Let $\mu \in \mathcal{P}(\X)$, $q \in \mathbb{R}$, $h \in \F$ and $E \subseteq \X$.
We start by introducing the generalized  packing measure ${\mathcal P}^{q,h}_{\mu}$ then we define a variant of this measure ${\mathcal R}^{q, h}_{\mu}$.  Let $\delta >0$, a sequence $ (x_i,r_i)_i$, $x_i\in E$ and $ r_i >0$, is a $\delta$-packing of $E$ if, and only if, for all $i,j = 1, 2,\ldots $ we have
$$
i\neq j \implies \rho(x_i, x_j) >r_i + r_j
$$
and $r_i \le \delta$. We denote by $\Upsilon_\delta(E)$ the set of all   $\delta$-packing of $E$.
Now, write, if $E\neq \emptyset$,
\begin{eqnarray*}
{{\PPP}}^{q, h}_{\mu, \delta}(E) &=&\displaystyle\sup\Big\{ \sum_i
\mu\big(B(x_i,r_i)\big)^q h(2r_i);\; (x_i,r_i)_i\in \Upsilon_\delta( E)\Big\}\\
{{\mathcal P}}^{q,h}_{\mu,0 }(E) &=& \displaystyle\inf_{\delta>0} {{\mathcal P}}^{q,h}_{\mu, \delta}(E) = \lim_{\delta\to 0}{{\mathcal P}}^{q,h}_{\mu,\delta }(E).\\
 \end{eqnarray*}
 The function ${{\mathcal P}}^{q,h}_{\mu,0 }$ is increasing but not $\sigma$-additive. By applying now the standard construction \cite{Thomas76, Wen07, Pe}, we obtain the generalized packing $h$-measure defined by
 $$
 {\mathcal P}^{q, h}_{\mu}(E)=\displaystyle\inf\Big\{ \sum_{i=1}^\infty  {{\mathcal P}}^{q,h}_{\mu,0 }(E_i) ; \;\;\; E \subseteq \bigcup_{i=1}^\infty E_i \Big\}.
 $$
If $E=\emptyset $ then  ${{\mathcal P}}^{q,h}_{\mu}(\emptyset) =0$. The function ${\mathcal P}^{q,h}_{\mu}$ is of course a generalization of the packing measure ${\mathcal P}^t$ \cite{Raymond88, Joyce95}. Now, a sequence  $ (x_i,r_i)_i$, $x_i\in E$ and $ r_i  >0$, is a   $\delta$-pseudo-packing of $E$ if and only if, for all $i,j = 1, 2,\ldots $ we have
$$
i\neq j \implies \rho(x_i, x_j) >\max(r_i, r_j)
$$
and $r_i \le \delta$. We denote by $\wt\Upsilon_\delta(E)$ the set of all  $\delta$-pseudo-packing of $E$.
Similarly, the pseudo-packing $h$-measure $ {\mathcal R}^{q, h}_{\mu}$ is defined by
\begin{eqnarray*}
{{\RRR}}^{q, h}_{\mu, \delta}(E) &=&\displaystyle\sup\Big\{ \sum_i
\mu\big(B(x_i,r_i)\big)^q h(2r_i);\; (x_i,r_i)_i\in \wt\Upsilon_\delta(E)\Big\}\\
 {{\mathcal R}}^{q,h}_{\mu,0 }(E) &=& \displaystyle\inf_{\delta>0} {{\mathcal R}}^{q,h}_{\mu, \delta}(E) = \lim_{\delta\to 0}{{\mathcal R}}^{q,h}_{\mu,\delta }(E)\\
 {\mathcal R}^{q, h}_{\mu}(E)&=&\displaystyle\inf\Big\{ \sum_{i=1}^\infty  {{\mathcal R}}^{q,h}_{\mu,0 }(E_i) ; \;\;\; E \subseteq \bigcup_{i=1}^\infty E_i \Big\},
\end{eqnarray*}
 if $E\neq \emptyset$ and ${{\mathcal R}}^{q,h}_{\mu}(\emptyset) =0$. The function ${\mathcal R}^{q,h}_{\mu}$ is of course a generalization of the pseudo-packing measure ${\mathcal R}^h$ \cite{howroyd96, Edgar07}.

\begin{remark}
A  sequence $ \pi=(x_i,r_i)_i$, $x_i\in E$ and $ r_i >0$   is  a $\delta$-relative-packing of $E$ if, and only if, for all $i, j =1,2,\ldots$, $i \neq j \Rightarrow B(x_{i}, r_{i}) \bigcap B(x_{j},r_{j})=\emptyset$  and $r_i \le \delta$.
Note that a $\delta$-packing $\pi$  of a set $E$ may be interpreted in Euclidean space as $\delta$-relative-packing.  But this is not the case in general metric space, then we may consider a new generalized measure ${\widetilde\PPP}^{q, h}_{\mu}$ by using   relative-packing of $E$.
 The function ${\widetilde\PPP}^{q,h}_{\mu}$ is  a  generalization of the $(b)$-packing measure introduced in \cite{Edgar98}. In addition,  we have
\begin{equation}\label{P-relative}
 \PPP_\mu^{q,h}\le \widetilde\PPP_\mu^{q,h}. 
 \end{equation}
 \end{remark}
\begin{flushright}
$ \square $
 \end{flushright}  
 Now,  we will  prove that  the generalized  packing and 
pseudo-packing   $h$-measures can be expressed as Henstock-Thomas "variation" measures (Theorem \ref{full}).

\begin{definition} \label{def}
 Let $E\subseteq \X$, a  sequence $ \pi=(x_i,r_i)_i$, $x_i\in E$ and $ r_i >0$    and $\Delta$ is a gauge function for $E$, that is  a function $\Delta : E \to (0,\infty).$  $\pi$ is said to be  $\Delta$-fine if $r <  \Delta(x)$ for all $(x,r) \in \pi.$
\end{definition}

Let $h$ be a Hausdorff function and  $\Delta$ is a gauge function
for a set $E\subseteq \X$. We write,
$$
{\RRR}^{q, h}_{\Delta,\mu}(E)= \sup\sum_{(x,r) \in \pi}  \mu(B(x,r))^q \; h(2 r),
$$
where the supremum is over all $\Delta$-fine pseudo-packings $\pi$
of $E$. As $\Delta$ decreases pointwise, the value ${\RRR}^{q,
h}_{\Delta,\mu}(E)$  decreases. For the limit, write
$${\RRR}^{q, h}_{\ast,\mu}(E)=\inf_{\Delta}{\RRR}^{q, h}_{\Delta,\mu}(E),$$
where the infimum is over all gauges $\Delta$ for $E$. Similarly, we define
$${\PPP}^{q, h}_{\ast,\mu}(E)=\inf_{\Delta}{\PPP}^{q, h}_{\Delta,\mu}(E) ,$$
where we use in the definition 
of ${\PPP}^{q, h}_{\Delta,\mu}$ 
the $\Delta$-fine packings. 

\begin{proposition}\label{p1}
Let $\mu\in {\mathcal P}(\X)$, $q\in \R$  and $h\in \F$. Then
  ${\PPP}^{q, h}_{\ast,\mu}$ and
${\RRR}^{q, h}_{\ast,\mu}$  
are  metric outer measures on $ \X$ and
then they are  measures on the  Borel algebra.
\end{proposition}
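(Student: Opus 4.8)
The plan is to recognize ${\RRR}^{q,h}_{\ast,\mu}$ and ${\PPP}^{q,h}_{\ast,\mu}$ as Henstock--Thomas variation set functions and to verify directly the two properties that, by Carath\'eodory's criterion, force a set function on a metric space to be a metric outer measure whose measurable sets contain the Borel algebra: that it is an outer measure, and that it is additive on positively separated sets. I would carry out the argument for $\phi:={\RRR}^{q,h}_{\ast,\mu}$; the case of ${\PPP}^{q,h}_{\ast,\mu}$ is word-for-word the same, with $\delta$-pseudo-packings replaced by $\delta$-packings throughout. Throughout, each term $\mu(B(x,r))^q\,h(2r)$ is nonnegative (with the conventions $0^q=\infty$ for $q\le0$), so all the suprema and sums below are taken in $[0,\infty]$, and no regularity of $h$ beyond positivity is needed.

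First I would check the outer measure axioms. One has $\phi(\emptyset)=0$ since the only pseudo-packing of $\emptyset$ is the empty one. For monotonicity, if $A\subseteq B$ then restricting a gauge $\Delta$ on $B$ to $A$ turns every $(\Delta|_A)$-fine pseudo-packing of $A$ into a $\Delta$-fine pseudo-packing of $B$, so that ${\RRR}^{q,h}_{\Delta|_A,\mu}(A)\le{\RRR}^{q,h}_{\Delta,\mu}(B)$ for every gauge $\Delta$ on $B$; since $\phi(A)\le{\RRR}^{q,h}_{\Delta|_A,\mu}(A)$, taking the infimum over $\Delta$ gives $\phi(A)\le\phi(B)$. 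The substantive point is countable subadditivity. Given $E\subseteq\bigcup_n E_n$, set $E_n'=E_n\setminus\bigcup_{m<n}E_m$, fix $\e>0$, and choose for each $n$ a gauge $\Delta_n$ on $E_n$ with ${\RRR}^{q,h}_{\Delta_n,\mu}(E_n)\le\phi(E_n)+\e\,2^{-n}$. Define a single gauge $\Delta$ on $E$ by $\Delta(x)=\Delta_{n(x)}(x)$, where $n(x)$ is the unique index with $x\in E'_{n(x)}$. Any $\Delta$-fine pseudo-packing $\pi$ of $E$ then partitions as $\pi=\bigcup_n\pi_n$ with $\pi_n=\{(x,r)\in\pi:\ x\in E'_n\}$, and each $\pi_n$ is a $\Delta_n$-fine pseudo-packing of $E_n$. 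Summing the contributions and taking the supremum over $\pi$ yields ${\RRR}^{q,h}_{\Delta,\mu}(E)\le\sum_n\phi(E_n)+\e$, hence $\phi(E)\le\sum_n\phi(E_n)$ as $\e\to0$.

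Next I would establish the metric property. Let $A,B\subseteq\X$ with $\delta_0:=\rho(A,B)>0$; subadditivity already gives $\phi(A\cup B)\le\phi(A)+\phi(B)$, so only the reverse inequality is at issue. Fix any gauge $\Delta$ on $A\cup B$ and replace it by the truncated gauge $\Delta'=\min(\Delta,\delta_0/2)$. The key geometric observation is that if $\pi_A$ is a $\Delta'$-fine pseudo-packing of $A$ and $\pi_B$ one of $B$, then $\pi_A\cup\pi_B$ is a $\Delta'$-fine pseudo-packing of $A\cup B$: for a centre $x\in A$ and a centre $y\in B$ one has $\rho(x,y)\ge\delta_0>\delta_0/2>\max(r_x,r_y)$, so the pseudo-packing separation condition across the two families holds automatically. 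Consequently ${\RRR}^{q,h}_{\Delta',\mu}(A)+{\RRR}^{q,h}_{\Delta',\mu}(B)\le{\RRR}^{q,h}_{\Delta',\mu}(A\cup B)\le{\RRR}^{q,h}_{\Delta,\mu}(A\cup B)$, and since $\phi(A)\le{\RRR}^{q,h}_{\Delta',\mu}(A)$ and $\phi(B)\le{\RRR}^{q,h}_{\Delta',\mu}(B)$, taking the infimum over $\Delta$ gives $\phi(A)+\phi(B)\le\phi(A\cup B)$.

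Having shown that $\phi$ is an outer measure that is additive on positively separated sets, Carath\'eodory's criterion yields that $\phi$ is a metric outer measure and that every Borel set is $\phi$-measurable; restricting $\phi$ to the Borel algebra produces a Borel measure, and identically for ${\PPP}^{q,h}_{\ast,\mu}$. I expect the main obstacle to be the subadditivity step: one must manufacture a single gauge on $E$ that simultaneously controls every piece $E_n$, and then confirm that an arbitrary $\Delta$-fine pseudo-packing of $E$ decomposes into admissible pseudo-packings of the individual $E_n$. This rests on the elementary but essential fact that any sub-collection of a pseudo-packing is again a pseudo-packing, so that restricting $\pi$ to the centres lying in $E'_n$ neither violates the separation condition nor enlarges any radius, keeping each $\pi_n$ both $\Delta_n$-fine and admissible.
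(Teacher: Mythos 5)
Your proof is correct, and it is essentially the argument the paper invokes: the paper's proof simply defers to Proposition 3.11 of \cite{Edgar07}, whose standard Henstock--Thomas variation argument is exactly what you carry out (combining gauges via the disjointification $E_n'$ for countable subadditivity, and truncating the gauge below $\rho(A,B)/2$ so that packings of separated sets merge, giving the metric property via Carath\'eodory). You have merely written out in full the details the paper leaves to the reference, including the correct observation that the same truncation handles both the $\max(r_i,r_j)$ and the $r_i+r_j$ separation conditions.
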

\begin{proof}
The proof is straightforward and mimics that in Proposition 3.11 in \cite{Edgar07}.

\end{proof}

Identifying  the  generalized packing (or pseudo-packing)  $h$-measure with the full variation  does not require any assumptions ( such as finite order, doubling condition or Vitali property).
\begin{theorem}\label{full}
Let $\mu\in {\mathcal P}(\X),$ $h\in \F$, $q\in \R$ and $E\subseteq \X$.  Then
$${\PPP}^{q, h}_{\ast,\mu}(E)={\PPP}^{q, h}_{\mu}(E) \;\;\ \text{and}\;\; {\RRR}^{q, h}_{\ast,\mu}(E)={\RRR}^{q, h}_{\mu}(E).$$
\end{theorem}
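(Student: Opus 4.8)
The plan is to prove the two identities ${\PPP}^{q,h}_{\ast,\mu}(E)={\PPP}^{q,h}_{\mu}(E)$ and ${\RRR}^{q,h}_{\ast,\mu}(E)={\RRR}^{q,h}_{\mu}(E)$ in parallel, since the arguments are identical once one replaces packings by pseudo-packings and $\Upsilon_\delta$ by $\wt\Upsilon_\delta$. I will write out the details only for the pseudo-packing case and remark that the packing case follows verbatim. The key conceptual point is that a constant gauge $\Delta\equiv\delta$ is itself a gauge, so there is a natural comparison between the $\delta$-indexed premeasures and the gauge-indexed ones; the whole theorem is essentially the statement that the Method-I construction (infimum over countable covers) applied to the premeasure ${\RRR}^{q,h}_{\mu,0}$ coincides with the Henstock-variation construction, which already produces a measure by Proposition~\ref{p1}.

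First I would establish the inequality ${\RRR}^{q,h}_{\ast,\mu}(E)\le {\RRR}^{q,h}_{\mu}(E)$. For any countable cover $E\subseteq\bigcup_i E_i$, I would use subadditivity of the outer measure ${\RRR}^{q,h}_{\ast,\mu}$ (guaranteed by Proposition~\ref{p1}) to reduce to showing ${\RRR}^{q,h}_{\ast,\mu}(E_i)\le {\RRR}^{q,h}_{\mu,0}(E_i)$ for each piece. For a single set $A$, given any gauge $\Delta$, I would compare a $\Delta$-fine pseudo-packing with a $\delta$-pseudo-packing: a $\Delta$-fine packing need not have uniformly bounded radii, but for the premeasure ${\RRR}^{q,h}_{\mu,0}(A)=\inf_\delta {\RRR}^{q,h}_{\mu,\delta}(A)$ one can always refine $\Delta$ to be bounded by any prescribed $\delta$, so that ${\RRR}^{q,h}_{\Delta,\mu}(A)\le {\RRR}^{q,h}_{\mu,\delta}(A)$, and taking the infimum over $\delta$ gives ${\RRR}^{q,h}_{\ast,\mu}(A)\le {\RRR}^{q,h}_{\mu,0}(A)$. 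Summing over $i$ and taking the infimum over covers yields the desired inequality.

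For the reverse inequality ${\RRR}^{q,h}_{\mu}(E)\le {\RRR}^{q,h}_{\ast,\mu}(E)$, I would exploit that ${\RRR}^{q,h}_{\ast,\mu}$ is a genuine Borel measure, whereas ${\RRR}^{q,h}_{\mu}$ is built from ${\RRR}^{q,h}_{\mu,0}$ by the covering construction. The strategy is to show ${\RRR}^{q,h}_{\mu,0}(A)\le {\RRR}^{q,h}_{\ast,\mu}(A)$ for every $A$, after which the covering-infimum defining ${\RRR}^{q,h}_{\mu}$ can only decrease the value, giving ${\RRR}^{q,h}_{\mu}(E)\le {\RRR}^{q,h}_{\ast,\mu}(E)$ once we invoke that ${\RRR}^{q,h}_{\ast,\mu}$ is countably subadditive on the cover. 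To get ${\RRR}^{q,h}_{\mu,0}(A)\le {\RRR}^{q,h}_{\ast,\mu}(A)$, I would take a constant gauge $\Delta\equiv\delta$, observe that every $\delta$-pseudo-packing is $\Delta$-fine, hence ${\RRR}^{q,h}_{\mu,\delta}(A)\le {\RRR}^{q,h}_{\Delta,\mu}(A)$; but the infimum over all gauges is at most the value at the constant gauge, so ${\RRR}^{q,h}_{\ast,\mu}(A)\le {\RRR}^{q,h}_{\Delta,\mu}(A)$ does not immediately close the loop. The honest route is to pass through the measure: for a Borel set one decomposes $A$ along a gauge-adapted partition and uses the additivity of ${\RRR}^{q,h}_{\ast,\mu}$ to reassemble a near-optimal pseudo-packing realizing ${\RRR}^{q,h}_{\mu,0}$.

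The main obstacle will be exactly this reverse direction, and specifically the interplay between the pointwise (gauge) localization and the uniform ($\delta$) bound. The technical heart is a covering or exhaustion argument showing that an optimal gauge cannot beat the uniform premeasure by more than an arbitrarily small amount; this is where one typically invokes a Vitali-type selection or, as the statement of Theorem~\ref{full} stresses, manages to avoid it entirely. I expect the cleanest formulation mirrors Edgar's treatment (as Proposition~\ref{p1} already does): one proves ${\RRR}^{q,h}_{\ast,\mu}$ is the largest metric outer measure dominated by the premeasure, and ${\RRR}^{q,h}_{\mu}$ is the Method-I outer measure from the same premeasure, and then shows these coincide because the premeasure ${\RRR}^{q,h}_{\mu,0}$ is already finitely subadditive and monotone. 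The packing identity ${\PPP}^{q,h}_{\ast,\mu}(E)={\PPP}^{q,h}_{\mu}(E)$ follows by the identical argument with $\wt\Upsilon_\delta$ replaced by $\Upsilon_\delta$, and I would state this explicitly rather than repeat the proof.
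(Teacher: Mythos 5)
Your first direction is fine and coincides with the paper's: the constant gauge $\Delta\equiv\delta$ shows ${\RRR}^{q,h}_{\ast,\mu}(A)\le {\RRR}^{q,h}_{\mu,\delta}(A)$ for each set $A$, hence ${\RRR}^{q,h}_{\ast,\mu}(A)\le{\RRR}^{q,h}_{\mu,0}(A)$, and countable subadditivity of ${\RRR}^{q,h}_{\ast,\mu}$ (Proposition~\ref{p1}) over an arbitrary cover gives ${\RRR}^{q,h}_{\ast,\mu}(E)\le{\RRR}^{q,h}_{\mu}(E)$. The gap is in the reverse direction, and it is genuine. The intermediate inequality you propose, ${\RRR}^{q,h}_{\mu,0}(A)\le {\RRR}^{q,h}_{\ast,\mu}(A)$ for every $A$, is false in general: the trivial cover always gives ${\RRR}^{q,h}_{\mu}\le {\RRR}^{q,h}_{\mu,0}$, so your inequality combined with the theorem itself would force premeasure and measure to agree on every set, whereas already for $q=0$ and $h(r)=r^{s}$ the (pseudo-)packing premeasure of a countable dense subset of $[0,1]$ is positive (it sees the closure), while the measure is zero. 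So that route cannot be repaired as stated. Your fallback remarks --- ``decompose $A$ along a gauge-adapted partition and reassemble a near-optimal pseudo-packing,'' or that the Method-I and variation constructions coincide ``because the premeasure is finitely subadditive and monotone'' --- name no mechanism: finite subadditivity of ${\RRR}^{q,h}_{\mu,0}$ does nothing to show that the gauge construction dominates the infimum over countable covers, which is exactly what must be proved.

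The missing idea, which is the entire content of the hard direction in the paper, is to slice $E$ by sublevel sets of the gauge. Given a gauge $\Delta$ on $E$, put $E_n=\bigl\{x\in E:\ \Delta(x)\ge 1/n\bigr\}$. Every $(1/n)$-pseudo-packing of $E_n$ is automatically $\Delta$-fine, so
\[
{\RRR}^{q,h}_{\Delta,\mu}(E)\ \ge\ {\RRR}^{q,h}_{\Delta,\mu}(E_n)\ \ge\ {\RRR}^{q,h}_{\mu,1/n}(E_n)\ \ge\ {\RRR}^{q,h}_{\mu,0}(E_n)\ \ge\ {\RRR}^{q,h}_{\mu}(E_n),
\]
and since $E_n\nearrow E$ and ${\RRR}^{q,h}_{\mu}$ satisfies the increasing sets lemma, letting $n\to\infty$ yields ${\RRR}^{q,h}_{\Delta,\mu}(E)\ge {\RRR}^{q,h}_{\mu}(E)$ for \emph{every} gauge $\Delta$, hence ${\RRR}^{q,h}_{\ast,\mu}(E)\ge{\RRR}^{q,h}_{\mu}(E)$. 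Note the comparison is made with the measure of the pieces $E_n$, never with the premeasure of $E$ --- precisely the distinction your sketch blurs. The same argument, with $\wt\Upsilon_\delta$ replaced by $\Upsilon_\delta$, proves ${\PPP}^{q,h}_{\ast,\mu}(E)={\PPP}^{q,h}_{\mu}(E)$, as you correctly anticipated.
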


\begin{proof}
We will only prove the first equality  and the others are similar.
Let $E \subseteq \X$ and  $\delta > 0.$ Then, the constant function
$\Delta(x) = \delta$ is a gauge for $E$. Therefore,
$$ {\PPP}^{q, h}_{\mu,0}(E) =  \inf_{\delta>0} {\PPP}^{q, h}_{\mu, \delta}(E)\ge   {\PPP}^{q, h}_{\ast,\mu}(E).$$
If $E =\bigcup_{n} E_n $ then, since $ {\PPP}^{q, h}_{\ast, \mu} $ is
an outer measure, we have
$$
{\PPP}^{q, h}_{\ast, \mu}(E) \leq \sum_{n=1}^{\infty} {\PPP}^{q,
h}_{\ast, \mu} (E_n) \leq  \sum_{n=1}^{\infty}  {\PPP}^{q, h}_{
\mu,0}(E_n).
 $$
Since, this is true for all countable covers of $E$, we get
$${\PPP}^{q, h}_{\mu}(E) \ge {\PPP}^{q, h}_{\ast,\mu} (E).$$

Now we will prove that ${\PPP}^{q, h}_{\ast,\mu}(E) \ge {\PPP}^{q,
h}_{\mu}(E)$. Let $\Delta$ be  a gauge on a set $E$  and consider,
for each positive integer $n$,  the set
   $$
   E_n =\Big\{ \; x \in E ;\;  \Delta(x) \geq \frac{1}{n} \;\Big\}.
   $$
 For each $n,$
$${\PPP}^{q,h}_{\Delta, \mu}(E) \geq {\PPP}^{q, h}_{\Delta,\mu}(E_n) \geq {\PPP}^{q,h}_{\mu, 1/n}(E_n)
\geq {\PPP}^{q,h}_{\mu,0}(E_n) \geq {\PPP}^{q,h}_{\mu}(E_n).$$
Since $E_n \nearrow  E$ 
 then  ${\PPP}^{q, h }_{\Delta,\mu}(E) \geq {\PPP}^{q, h}_{\mu}(E).$
 This is true for all gauges $\Delta,$ so
 ${\PPP}^{q, h}_{\ast,\mu}(E) \ge {\PPP}^{q, h}_{\mu}(E).$
\end{proof}

\begin{proposition}\label{ralation_R-P}
Let $\mu\in\mathcal{P}_D(\X)$, $q \in\mathbb{R}$ and $h \in \F_0$. Then there exists $\gamma$ such that
 \begin{equation}\label{P-R.}
 \PPP_\mu^{q,h}\le \RRR_\mu^{q,h}\le\gamma \PPP_\mu^{q,h}.
 \end{equation}
 \end{proposition}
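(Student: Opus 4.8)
The plan is to prove the two inequalities in \eqref{P-R.} separately, noting that the left one needs no hypotheses while the right one is where the doubling condition and the finite order enter. For the left inequality I would observe that $r_i+r_j\ge\max(r_i,r_j)$ forces every $\delta$-packing to be a $\delta$-pseudo-packing, so $\Upsilon_\delta(E)\subseteq\wt\Upsilon_\delta(E)$; hence $\PPP^{q,h}_{\mu,\delta}(E)\le\RRR^{q,h}_{\mu,\delta}(E)$ for every $\delta$, and passing to the limit $\delta\searrow 0$ and then to the covering infimum gives $\PPP_\mu^{q,h}\le\RRR_\mu^{q,h}$ (this part is valid for arbitrary $\mu$ and $h$).

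The crux is the right inequality, and the main device is that halving all radii converts a pseudo-packing into a genuine packing. Indeed, if $(x_i,r_i)_i\in\wt\Upsilon_\delta(E)$, then for $i\ne j$,
$$\rho(x_i,x_j)>\max(r_i,r_j)\ge\frac{r_i+r_j}{2}=\frac{r_i}{2}+\frac{r_j}{2},$$
so $(x_i,r_i/2)_i\in\Upsilon_{\delta/2}(E)\subseteq\Upsilon_\delta(E)$. The remaining task is a term-by-term comparison of the pseudo-packing summand $\mu(B(x_i,r_i))^q h(2r_i)$ with the packing summand $\mu(B(x_i,r_i/2))^q h(r_i)$.

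To carry this out I would first fix a threshold. Since $h\in\F_0$, there are $\gamma_1\ge 1$ and $\delta_1>0$ with $h(2r)\le\gamma_1 h(r)$ for all $r\le\delta_1$; since $\mu\in\mathcal{P}_D(\X)$ we have $P_2(\mu)<\infty$, so there are $C\ge 1$ and $\delta_2>0$ with $\mu(B(x,2r))\le C\,\mu(B(x,r))$ for all $x\in\supp\mu$ and $r\le\delta_2$. For the measure factor I would split on the sign of $q$: when $q\ge 0$, doubling gives $\mu(B(x_i,r_i))^q\le C^q\mu(B(x_i,r_i/2))^q$ for centers carrying positive mass, while centers off the support contribute $0$ on both sides; when $q<0$, the monotonicity of $t\mapsto t^q$ together with $B(x_i,r_i/2)\subseteq B(x_i,r_i)$ gives $\mu(B(x_i,r_i))^q\le\mu(B(x_i,r_i/2))^q$ at no cost (the degenerate off-support values being reconciled by the conventions). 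With $\gamma:=\gamma_1\max(C^q,1)$ and $\delta_0:=\min(\delta_1,\delta_2)$, this yields $\mu(B(x_i,r_i))^q h(2r_i)\le\gamma\,\mu(B(x_i,r_i/2))^q h(r_i)$ for every index with $r_i\le\delta_0$.

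Finally, for $\delta\le\delta_0$ I would sum the term-by-term bound over a pseudo-packing and recognize the right-hand side as a sum over the associated packing $(x_i,r_i/2)_i$, giving $\RRR^{q,h}_{\mu,\delta}(E)\le\gamma\,\PPP^{q,h}_{\mu,\delta}(E)$ after taking the supremum over $\wt\Upsilon_\delta(E)$; letting $\delta\searrow 0$ gives $\RRR^{q,h}_{\mu,0}(E)\le\gamma\,\PPP^{q,h}_{\mu,0}(E)$, and applying this piecewise to an arbitrary countable cover of $E$ propagates it through the covering infimum to $\RRR_\mu^{q,h}\le\gamma\,\PPP_\mu^{q,h}$. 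I expect the only delicate point to be the bookkeeping around the sign of $q$ and the degenerate cases covered by the conventions $0^q=\infty$ and $0\cdot\infty=0$ (equivalently, the treatment of centers lying outside $\supp\mu$); away from these, the comparison is routine.
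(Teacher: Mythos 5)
Your proof is correct and takes essentially the same route as the paper's: the left inequality via $\Upsilon_\delta(E)\subseteq\wt\Upsilon_\delta(E)$, and the right by halving radii to turn a $\delta$-pseudo-packing into a $\delta$-packing, with $h\in\F_0$ and $\mu\in\mathcal{P}_D(\X)$ supplying the constant $\gamma$. The paper compresses this into two sentences; you merely make explicit the term-by-term comparison (including the split on the sign of $q$ and the passage through the covering infimum) that it leaves implicit.
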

\begin{proof}
Let $E\subseteq \X$. Since any $\delta$-packing of $E$ is also  a $\delta$-pseudo-packing of $E$, the left side of the  inequality \eqref{P-R.} follows. Now, if $(x_i, r_i)$ is a $\delta$-pseudo-packing of $E$ then $(x_i, r_i/2)$ is a $\delta$-packing of $E$ and we get the right side of the inequality \eqref{P-R.} since $h\in \F_0$ and $\mu\in\mathcal{P}_D(\X)$.
 \end{proof}

In fact,  we can  prove the inequality  \eqref{P-R.}  without any restriction  on $\mu$ and $h$ but with
adding  a suitable geometric assumption on the metric space $\X$.\\
\begin{definition}\label{amenable}
 A metric space $\X$ is said to be amenable to packing  if there exists an integer $K\ge
1$ such that if  $\pi = (x_i, r_i)_i$ is a pseudo- packing of a set
$E\subseteq \X$, $n\in \N$   and   $y\in \X$  satisfying
$$
\rho(y, x_i) \le r_i,
$$
for all $1\le i \le n$  then $n\le K$.
 \end{definition}

 In
particular $\R^d$ equipped with the Euclidean distance satisfies
this condition. Indeed, assume for any   $x_1, \ldots, x_n \in \R^d
$ and positive numbers $r_1, \ldots, r_n$  that
\begin{equation}\label{case_Besi}
\begin{cases}
x_i \notin B(x_j, r_j)& \text{for}\; j\neq i\\
&\\
\displaystyle\bigcap_{i=1}^n B(x_i, r_i) \neq \emptyset&
\end{cases}
\end{equation}
we will prove that $n\le K$. We may assume, without loss of
generality,  that $x_i\neq 0$, $i=1, \ldots, n$, and $0\in
\bigcap_{i=1}^n B(x_i, r_i)$. Therefore, $\| x_i \| \le r_i \le \|
x_i -x_j\|$ for $i\neq j$, where $\| \cdot \|$ denotes the Euclidean
norm.  Hence, using  elementary geometric arguments, we deduce  that
the angle between $x_i$ and $x_j$ for $i\neq j$  is at least
$60^{\circ}$, that is,
$$
\Big\| \frac{x_i}{\|x_i\|} - \frac{x_j}{\|x_j\|} \Big \| \ge 1
$$
for $i\neq j$ \cite[Lemma 2.5]{Mat}. Then the conclusion follows by
compactness of the unit Euclidean sphere.  Moreover \cite[Lemma
10.2]{Raymond88}, one can prove that
\begin{equation}\label{3^d}
K\le 3^d.
\end{equation}


\begin{proposition}\label{amenable}
Let $\mu\in\mathcal{P}(\X)$, $q \in\mathbb{R}$ and $h \in \F$. Assume that $\X$ is amenable to packing. Then, there exists a constant $K$
such that
\begin{equation}\label{P-R2}
\RRR_{\mu}^{q,h}\le K  \PPP_{\mu}^{q,h}.
\end{equation}
\end{proposition}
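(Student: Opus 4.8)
The plan is to peel off the analytic wrapping first and isolate a single combinatorial statement about one packing, and then to read the geometry off the amenability hypothesis. To begin, I would argue that it suffices to establish the premeasure estimate
\[
\RRR_{\mu,\delta}^{q,h}(E)\le N\,\PPP_{\mu,\delta}^{q,h}(E)
\]
for every $\delta>0$ and every $E\subseteq\X$, where $N$ is a constant depending only on the amenability constant $K$. Indeed, letting $\delta\to 0$ then gives $\RRR_{\mu,0}^{q,h}\le N\,\PPP_{\mu,0}^{q,h}$, and since $\RRR_\mu^{q,h}$ and $\PPP_\mu^{q,h}$ are produced from these premeasures by the \emph{same} countable-covering construction, the factor $N$ passes through the infimum over covers termwise and yields \eqref{P-R2}. (Because all summands $\mu(B(x_i,r_i))^q h(2r_i)$ are nonnegative, possibly $+\infty$ under the convention $0^q=\infty$, every manipulation below is legitimate in $[0,\infty]$.)

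The heart of the matter is the following decomposition claim: every $\delta$-pseudo-packing $\pi=(x_i,r_i)_i$ of $E$ can be written as a union $\pi=\pi_1\cup\cdots\cup\pi_N$ of at most $N$ $\delta$-packings of $E$. Granting this, each $\pi_k$ is an admissible $\delta$-packing, so
\[
\sum_{(x_i,r_i)\in\pi}\mu(B(x_i,r_i))^q h(2r_i)=\sum_{k=1}^N\sum_{(x_i,r_i)\in\pi_k}\mu(B(x_i,r_i))^q h(2r_i)\le N\,\PPP_{\mu,\delta}^{q,h}(E),
\]
and taking the supremum over all $\delta$-pseudo-packings gives the premeasure inequality. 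It is enough to treat finite pseudo-packings and then pass to the supremum, so no convergence issues arise. To prove the decomposition I would introduce the conflict graph $G$ on the index set of $\pi$, joining $i$ and $j$ exactly when $\rho(x_i,x_j)\le r_i+r_j$, i.e. precisely when $(x_i,r_i)$ and $(x_j,r_j)$ cannot lie in one $\delta$-packing. An independent set of $G$ is then the same thing as a $\delta$-packing, so the claim is that $G$ is $N$-colourable. I would colour greedily after ordering the balls by decreasing radius, so that the number of colours is bounded by $1$ plus the maximal back-degree, a back-neighbour of a fixed index $i_0$ being an index $j$ with $r_j\ge r_{i_0}$ and $\rho(x_{i_0},x_j)\le r_{i_0}+r_j$.

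The main obstacle is exactly this back-degree bound, and it is here that Definition \ref{amenable} must enter. The difficulty is a mismatch of shapes: for a back-neighbour $j$ the pseudo-packing condition forces $r_j<\rho(x_{i_0},x_j)\le r_{i_0}+r_j$, so $x_{i_0}$ sits in the enlarged closed ball $\overline{B}(x_j,r_{i_0}+r_j)$ but \emph{not} in $\overline{B}(x_j,r_j)$ itself; thus the common-point form of amenability does not apply verbatim and the distance-sum conflict first has to be converted into a genuine overlap. I would aim to resolve this by noting that the back-neighbours $(x_j,r_j)$ still form a pseudo-packing all of whose radii dominate $r_{i_0}$, whence $r_{i_0}+r_j\le 2r_j$ and the enlarged balls $\overline{B}(x_j,r_{i_0}+r_j)$ retain a pseudo-packing-type separation with a fixed comparability constant; applying the bounded-overlap property at the common point $x_{i_0}$ would then cap the number of back-neighbours by a constant depending only on $K$. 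This is the precise abstract analogue of the elementary computation carried out above for $\R^d$ that produced $K\le 3^d$, and it is the only place where the geometry of $\X$ is genuinely used. I expect the delicacy to lie entirely in controlling this passage from the $\max$-separation of the pseudo-packing to a usable overlap of the enlarged balls; everything else is bookkeeping through the covering construction.
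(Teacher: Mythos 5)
Your skeleton is exactly the paper's: reduce \eqref{P-R2} to the fixed-scale premeasure bound $\RRR_{\mu,\delta}^{q,h}(E)\le N\,\PPP_{\mu,\delta}^{q,h}(E)$, and get that bound by splitting a single $\delta$-pseudo-packing into boundedly many $\delta$-packings. The paper simply \emph{asserts} this splitting (``we can distribute the constituents of $\pi$ into $K$ sequences\dots'') with no argument, whereas you try to prove it, and you have correctly located the soft spot: the conflict relation $\rho(x_i,x_j)\le r_i+r_j$ is a pairwise-intersection condition, not a common-point condition, so amenability does not apply verbatim. But your proposed repair does not close the gap. The enlarged family $(x_j,s_j)$ with $s_j=r_{i_0}+r_j$ does contain $x_{i_0}$ in every ball, yet its separation is only $\rho(x_j,x_{j'})>\max(r_j,r_{j'})\ge\tfrac12\max(s_j,s_{j'})$: it is a pseudo-packing only ``up to a factor $2$'', and the amenability hypothesis, as defined, speaks exclusively about exact pseudo-packings. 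Nothing in the hypothesis transfers the constant $K$ to families with a relaxed separation constant, and you supply no lemma doing so.

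Worse, the step you need is actually false, so no bookkeeping can rescue it. Every ultrametric space is amenable with $K=1$: if $\rho(y,x_1)\le r_1$ and $\rho(y,x_2)\le r_2$, then $\rho(x_1,x_2)\le\max(r_1,r_2)$, contradicting the pseudo-packing separation. Now take $x_0,x_1,\dots,x_n$ pairwise at distance $1$ (all distances equal is an ultrametric), with $r_0=0.1$ and $r_j=0.9$ for $j\ge 1$. This is a pseudo-packing ($1>0.9$); every $x_j$ is a back-neighbour of $x_0$, since $r_j\ge r_0$ and $\rho(x_0,x_j)=1\le r_0+r_j$, so the back-degree is $n$ although $K=1$; and $\rho(x_j,x_{j'})=1\le r_j+r_{j'}=1.8$, so the conflict graph on $x_1,\dots,x_n$ is \emph{complete} and the pseudo-packing $(x_j,0.9)_{j=1}^n$ admits no partition into fewer than $n$ packings. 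Hence neither your greedy back-degree bound nor any chromatic bound $f(K)$ can hold, and even the fixed-scale premeasure inequality through which both you and the paper argue fails in amenable spaces: in the $(n+1)$-point space above, with $\mu$ uniform, $q=0$, $h$ strictly increasing and $\delta\in(\tfrac12,1)$, one has $\RRR^{0,h}_{\mu,\delta}(\X)=(n+1)h(2\delta)$ while $\PPP^{0,h}_{\mu,\delta}(\X)\le h(2\delta)+n\,h(1)$, and the ratio exceeds any constant for $n$ large. So your proposal faithfully reproduces, and honestly flags, precisely the step the paper leaves unproved --- but that step is irreparable at a single scale; a correct proof (if the proposition holds in this generality) must either mix scales in the comparison of $\RRR_{\mu}^{q,h}$ with $\PPP_{\mu}^{q,h}$, or strengthen the amenability hypothesis so that it applies to families with $\rho(x_i,x_j)>c\max(r_i,r_j)$, which is what makes your enlargement argument work in $\R^d$.
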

\begin{proof}
Let $\delta>0$ and $\pi$ be a $\delta$-pseudo packing of a set $E\subseteq \X$. Since $\X$ is
amenable to packing, we can distribute the constituents of $\pi$
into $K$ sequences $\pi_i = \{(x_{ik}, r_{ik})\; k\in\N\} \subseteq \pi,
1\le i\le K$ such that each $i$ we have $\pi_i$ is a
$\delta$-packing of $E$ and so
$$
\sum_{(x,r)\in \pi} \mu(B(x,r))^q h(2r) \le
\sum_{i=1}^K\sum_{(x,r)\in \pi_i} \mu(B(x,r))^q h(2r) \le K \PPP_{\mu, \delta}^{q,h} (E).
$$
Therefore $ \RRR_{\mu, \delta}^{q,h} (E)\le K  \PPP_{\mu, \delta}^{q,h} (E)$, from which it follows  \eqref{P-R2}.
\end{proof}
\subsection{Example}
In this section, we will construct a separable and compact metric space $\mathcal{ X}$ such that
$$\PPP_{\mu}^{q, h}(\mathcal{ X}) =0 < 1 \le \RRR_{\mu}^{q, h}(\mathcal{ X}).$$
Fix an integer $N > 1$ and let $G(N)$ be a finite graph where the vertices are labelled as pairs of integers $(i,j)$  with $1\le i \le N$ and $0\le  j \le N$. Vertices $(i,j), j \neq  0$ are called peripheral vertices and vertices $(i,0)$ are called central vertices. A peripheral vertex  $(i, j)$ is joined only to $(i, 0)$ called its central neighbour. The  central vertices are joined to each other.
Let $u, v$ be  two vertices $u, v$.  we  write $u \sim v$ if $u = v$ or $u$ is joined to $v$ by an edge. We will write $u \nsim v$ if not $u \sim v$. \\
For a given sequence $N_1, N_2, \ldots$ and $q\in [0, 1)$ such that $\sum_n \frac{1}{N_n^{1-q}} < \infty$, we  consider the space $ \mathcal{ X} = \prod_{n=1}^\infty G(N_n)$. Let $u = (u_1, u_2, \ldots) \neq  v=(v_1, v_2, \ldots) \in \mathcal{ X}$ with $u_i, v_i\in  G(N_i)$. We denote by $n $ be the least integer such that $u_n \neq  v_n$.  We define the metric $\rho$  as follows :
$$
\begin{cases}
\rho(u, u) =0 &  \text{for every} \;\; u \in \mathcal{X}\\
\rho(u,v) = (1/2)^n & \text{if} \;\; u_n \sim  v_n\\
\rho(u,v) = (1/2)^{n-1} & \text{if} \;\; u_n \nsim  v_n.
\end{cases}
$$
This metric makes $\X$ into a compact, separable and totally disconnected metric space. Given a finite sequence $w_1\in G(N_1)$, $w_2\in G(N_2), \ldots, w_n \in  G(N_n)$ define a cylinder
$$
[w_1, \ldots, w_n] = \big\{u\in \mathcal{ X}\; :\; u_1=w_1, u_2=w_2, \ldots, u_n=w_n \big\}.
$$
The diameter of $[w_1, \ldots, w_n] $ is $1/2^n$. A cylinder will be called peripheral or central  according as the last coordinate is  peripheral or central. For $u\in \mathcal{ X}$ and $r\in ]0, 1[$, we define the closed  ball $B(u, r) $ as follows
$$
B(u, r) =\Big \{ v : \; u_1=v_1, u_2=v_2,\ldots, u_{n-1}=v_{n-1}, u_n \sim v_n \Big\},
$$
where $n$ be the integer such that $(1/2)^{n} \le r < (1/2)^{n-1}$. Therefore,   if $u_n$ is central then $B(u,r)$ is the union of $N_n$ central and $N_n$ peripheral cylinders. Moreover, if $u_n$ is peripheral then $B(u, r)$ is the union of one  central and one peripheral cylinder.\\

Let $ u = (u_1, u_2, \ldots) \in \mathcal{ X}$. $u$ is said to be  peripheral if all of the components $u_i$ are peripheral. Let $\gamma_0 = 1,$ $\gamma_n= \gamma_{n-1}/(N_n(N_n+1))$. We define a set function on the collection of all cylinders as follows:
$$
\mu([w_1, w_2, \ldots, w_n])= \gamma_n,
$$
with $\mu(\mathcal{ X})=\gamma_0=1$.  This set function may be extended to a Borel measure $\mu$ on $\mathcal{ X}$ in the usual manner. Let  $u\in \mathcal{ X}$ and $r\in (0,1)$, we choose $n $ such that $2^{-n}\le  r < 2^{-n+1}$. If $u_n$ is a central vertex then $B(u,r)$ is the union of $2N_n$ cylinders, half central and half peripheral. Then  $\mu(B(u,r))=2N_n\gamma_n$. Moreover, if $u_n$ is peripheral vertex then $B(u,r)$ is the union of two cylinders, one central and one peripheral. Then $ \mu(B(u,r))=2\gamma_n$.


Let $h$  be a Hausdorff function  such that
\begin{equation}\label{h_ex}
 \sum_{n\ge 1} \frac{ h(2^{-n+2})}{(N_n+1)\gamma_n^{1-q}} (2N_n)^q
 \end{equation}
 converge. Under this hypothesis we have the following result.

\begin{theorem}\label{P-neq-R}
Let $h$  be a Hausdorff function satisfying \eqref{h_ex}. Then
$\PPP_{\mu}^{q, h}(\mathcal{ X}) =0$.
In addition, if we choose  $h$ such that $h(2^{-n+2})=\gamma_n^{1-q}$ then  $\RRR_{\mu}^{q, h}(\mathcal{ X}) \ge 1.$
\end{theorem}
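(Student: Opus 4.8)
The plan is to handle the two assertions separately, reducing everything to a single scale $n$ at which the radius $r$ is taken just below $2^{-n+1}$: then $h(2r)$ is as close as we wish to $h(2^{-n+2})$ and $B(x,r)$ is determined by the $n$-th coordinate. The heart of the construction is that a genuine packing and a pseudo-packing behave completely differently at such a scale inside a cylinder $[w_1,\dots,w_{n-1}]$ of level $n-1$, which has diameter exactly $2^{-n+1}$.

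For the first equality I will bound an arbitrary $\delta$-packing of $\mathcal{X}$, splitting its constituents into groups $\pi_n$ according to $r_i\in[2^{-n},2^{-n+1})$. The key observation is that two members of $\pi_n$ require $\rho(x_i,x_j)>r_i+r_j\ge 2^{-n+1}$, which already exceeds the diameter $2^{-n+1}$ of any level-$(n-1)$ cylinder; hence each such cylinder carries at most one member of $\pi_n$, so $|\pi_n|$ is at most the number $1/\gamma_{n-1}$ of level-$(n-1)$ cylinders. Bounding each summand crudely by its largest possible value $(2N_n\gamma_n)^q\,h(2^{-n+2})$ (attained at a central vertex) and using $\gamma_{n-1}=N_n(N_n+1)\gamma_n$, the contribution of $\pi_n$ turns out to be at most $\tfrac1{N_n}$ times the general term of the series \eqref{h_ex}. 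Summing over the scales allowed by $\delta$ leaves a tail of a convergent series, so $\PPP^{q,h}_{\mu,\delta}(\mathcal{X})\to0$ as $\delta\to0$; since $\PPP^{q,h}_{\mu}\le\PPP^{q,h}_{\mu,0}$, this gives $\PPP^{q,h}_{\mu}(\mathcal{X})=0$.

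For the lower bound on $\RRR^{q,h}_{\mu}$ I will prove the stronger pointwise statement $\RRR^{q,h}_{\mu,0}(E)\ge\mu(E)$ for every $E\subseteq\mathcal{X}$; the theorem then follows, since for any countable cover $\mathcal{X}=\bigcup_iE_i$ one gets $\sum_i\RRR^{q,h}_{\mu,0}(E_i)\ge\sum_i\mu(E_i)\ge\mu(\mathcal{X})=1$, and $\RRR^{q,h}_{\mu}(\mathcal{X})$ is the infimum of such sums. To estimate $\RRR^{q,h}_{\mu,0}(E)$ from below, fix a large $n$ with $2^{-n+1}\le\delta$ and choose, in every peripheral cylinder of level $n$ meeting $E$, one center $x\in E$ together with a radius $r$ slightly below $2^{-n+1}$. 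Since distinct peripheral vertices are never joined, two such centers lie at distance exactly $2^{-n+1}>r$ when they share a level-$(n-1)$ cylinder, and at distance $\ge 2^{-n+2}$ otherwise; thus the family is a genuine $\delta$-pseudo-packing of $E$. Each ball has measure $2\gamma_n$ and $h(2r)\approx\gamma_n^{1-q}$, so the sum is essentially $2^q\gamma_n$ times the number of peripheral cylinders meeting $E$, that is $2^q\,\mu\big(\bigcup\{P:P\cap E\ne\emptyset\}\big)\ge 2^q\,\mu\big(E\cap\{x_n\ \mathrm{peripheral}\}\big)$.

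The one delicate point — the step I expect to be the real obstacle — is that $E$ could a priori avoid peripheral cylinders at a given level, so this last quantity must be shown to recover the full mass of $E$ as $n\to\infty$. Here I will use that $\{x:\,x_n\ \mathrm{central}\}$ has $\mu$-measure $N_n\gamma_n/\gamma_{n-1}=1/(N_n+1)$, and that $\sum_n 1/(N_n+1)<\infty$: the summability hypothesis $\sum_n N_n^{-(1-q)}<\infty$ with $0\le q<1$ forces $N_n\to\infty$, whence $1/(N_n+1)\le N_n^{-(1-q)}$ is summable. Consequently $\mu(E\cap\{x_n\ \mathrm{central}\})\le 1/(N_n+1)\to0$, so $\mu(E\cap\{x_n\ \mathrm{peripheral}\})\to\mu(E)$; letting $r\nearrow 2^{-n+1}$ (so that $h(2r)\to h(2^{-n+2})=\gamma_n^{1-q}$) and then $n\to\infty$ yields $\RRR^{q,h}_{\mu,\delta}(E)\ge 2^q\mu(E)$ for every $\delta$, hence $\RRR^{q,h}_{\mu,0}(E)\ge 2^q\mu(E)\ge\mu(E)$ because $q\ge0$. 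Combined with the cover argument this proves $\RRR^{q,h}_{\mu}(\mathcal{X})\ge1$.
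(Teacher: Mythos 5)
Your proof is correct, but it takes a genuinely different route from the paper's in both halves, and the differences are worth recording. For $\PPP_{\mu}^{q,h}(\mathcal{X})=0$, the paper passes to the larger relative-packing premeasure $\widetilde\PPP_{\mu}^{q,h}$ via \eqref{P-relative} and invokes an external counting result (\cite[Proposition 3.4]{Edgar20/21}: inside a level-$(n-1)$ cylinder a family of disjoint balls of scale $n$ contains at most one central ball or at most $N_n$ peripheral ones); you instead exploit the full strength of the packing condition $\rho(x_i,x_j)>r_i+r_j\ge 2^{-n+1}$, which already exceeds the diameter $2^{-n+1}$ of a level-$(n-1)$ cylinder, to get at most \emph{one} ball of scale $n$ per cylinder. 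Your count is self-contained (no detour through $\widetilde\PPP$, no citation) and even gains the factor $1/N_n$ over the general term of \eqref{h_ex}; what the paper's route buys is the strictly stronger conclusion $\widetilde\PPP_{\mu}^{q,h}(\mathcal{X})=0$, i.e.\ vanishing of the disjoint-ball variant as well, which your argument does not give (your one-ball-per-cylinder count uses the separation $>r_i+r_j$ and fails for merely disjoint balls). For $\RRR_{\mu}^{q,h}(\mathcal{X})\ge 1$, the paper goes through the variation characterization $\RRR^{q,h}_{\mu}=\RRR^{q,h}_{\ast,\mu}$ of Theorem \ref{full}, an arbitrary gauge $\Delta$, and \cite[Lemma 3.2]{Edgar20/21} to extract a $\Delta$-fine pseudo-packing of mass at least $1-\epsilon$; you bypass gauges entirely by proving the uniform premeasure bound $\RRR^{q,h}_{\mu,0}(E)\ge\widetilde\mu(E)$ for every $E\subseteq\mathcal{X}$ and feeding it directly into the countable-cover definition of $\RRR^{q,h}_{\mu}$ via subadditivity of $\widetilde\mu$ — a more elementary argument that also yields the quantitatively stronger $\RRR^{q,h}_{\mu}(E)\ge 2^q\,\widetilde\mu(E)$ for all $E$. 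Your treatment of the delicate point is likewise different: where the paper selects, for a given gauge, the set $B$ of points with peripheral $n$-th coordinate and $\Delta(u)>2^{-n+1}$, you use the explicit estimate $\mu(\{x:x_n\ \text{central}\})=N_n\gamma_n/\gamma_{n-1}=1/(N_n+1)\to 0$, which is legitimate since $\sum_n N_n^{-(1-q)}<\infty$ with $q\in[0,1)$ forces $N_n\to\infty$. Two cosmetic slips, neither affecting validity: centers in distinct level-$(n-1)$ cylinders are only guaranteed to be at distance $\ge 2^{-n+1}$ (take the first disagreement at coordinate $n-1$ with adjacent vertices), not $\ge 2^{-n+2}$ as you wrote — but $2^{-n+1}>r$ still holds since you took $r<2^{-n+1}$ strictly, which is also exactly why your limit $r\nearrow 2^{-n+1}$ with continuity of $h$ is the right move; and for non-Borel $E$ you should write the outer measure $\widetilde\mu(E)$ throughout rather than $\mu(E)$, as your cover argument implicitly requires.
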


\begin{proof}
Using \eqref{P-relative} we will prove that $\widetilde\PPP_{\mu}^{q, h}(\mathcal{ X}) =0$. Let $\delta= 2^{-m+1}$, where $m\in \N$ and let $\pi$ be a $\delta$- relative packing of $\X$.  $\pi$ contain, inside a given cylinder $[w_1, \ldots, w_{n-1}]$ among the ball $B(u,r)$, at most one central ball $B(u,r)$ or at most $N_n$ peripheral balls $B(u,r)$ with  $2^{-n}\le  r < 2^{-n+1}$ \cite[Proposition 3.4]{Edgar20/21}. Therefore,
\begin{eqnarray*}
\sum_{(x,r)\in \pi} \mu(B(u,r))^q h(2r)
&\le& \sum_{n=m}^{\infty} \Big( \prod_{k=1}^{n-1} N_k(N_k+1)\Big) N_n \mu(B(u,r))^q h(2r)\\
&\le&   \sum_{n=m}^{\infty} \frac{ h(2r)}{(N_n+1)\gamma_n} (2N_n\gamma_n)^q\\
&\le&   \sum_{n=m}^{\infty} \frac{ h(2^{-n+2})}{(N_n+1)\gamma_n^{1-q}} (2N_n)^q.\\
\end{eqnarray*}
It follows that $\widetilde\PPP_{\mu,\delta}^{q, h}(\mathcal{ X})\le \sum_{n=m}^{\infty} \frac{ h(2^{-n+2})}{(N_n+1)\gamma_n^{1-q}} (2N_n)^q$ (a tail of a convergence series). Thus, as $m\to 0$, we get $ \widetilde\PPP_{\mu}^{q, h}(\mathcal{ X})=0.$

Now, we will prove that $\RRR_{\mu}^{q, h}(\mathcal{ X}) \ge 1$.
Let $\tilde{\mu}$ the outer measure generated by $\mu$. Let $\epsilon >0$ and $\Delta$ be a gauge on $\mathcal{ X}$. Since $\Delta(u) > 0$, for all $u$, we can choose $m \in \mathbb{N}$ so that
$$\widetilde{\mu} \Big\{u \in \mathcal{ X} : \Delta(u) > 2^{-m+1} \Big\} > 1- \epsilon/2.$$
We consider the set
$$B =\Big\{ u : u_n\;\;  \text{is peripheral and }\;\; \Delta(u) >2^{-n+1}\Big\}.$$
Then $\widetilde{\mu}(B) >1-\epsilon$. Let $A_{n}= \Big\{ (w_{1}, \ldots,~w_{n}) : [w_{1}, \ldots,~w_{n}] \cap B \neq \emptyset \Big\}$ and let $M_{n}$ be the number of elements of $A_{n}$. There exists $\Delta$-fine pseudo packing $\pi_n$ of $\mathcal{ X}$ such that $\#  \pi_n = M_n$ and $M_n \gamma_n \ge \widetilde \mu (B) \ge 1-\epsilon$ \cite[Lemma 3.2]{Edgar20/21}. Therefore, for $r= 2^{-n+1}$ we have,
\begin{eqnarray*}
\RRR_{\mu,\Delta}^{q, h}(\mathcal{ X})&\ge& \sum_{(x, r)\in \pi_n} \mu(B(x, r))^q h(2r)\ge M_n\gamma_n^{q} \; \gamma_n^{1-q} \\
&=& M_n\gamma_n \ge 1-\epsilon.
\end{eqnarray*}
Since this is true for all $\epsilon >0$, so $ \RRR_{\ast, \mu}^{q, h}(\mathcal{ X})\ge 1.$

\end{proof}



\subsection{Weighted  generalized   packing $h$-measure}
 
The generalized packing measure is "dual" to generalized Hausdorff measure. Now, we will introduce the weighted  generalized packing $h$-measure which may be "dual" to weigted generalized Hausodorff $h$-measure. The reader can be referred to \cite{Federer69, Kelly73, howroyd94, howroyd96} for more details of weighted Hausdorff mrasure. For $E\subseteq \X$, we say that $(c_i, x_i, r_i)$ with $c_i>0, x_i\in E$ and $r_i >0$ is a weighted $\delta$-packing of $E$ if, and only if, for all $x\in E$ we have,
$$
\sum \Big\{ c_i , \; \rho(x_i, x) \le r_i \Big\}\le 1
$$
and $r_i\le \delta$ for $i=1, 2 \ldots$.   We denote by $\wt{ \wt\Upsilon}_\delta(E)$ the set of all weighted $\delta$-packing of $E$. The weighted  generalized   packing $h$-measure may be defined as follows
\begin{eqnarray*}
{{\mathcal Q}}^{q, h}_{\mu, \delta}(E) &=&\displaystyle\sup\Big\{ \sum_i c_i
\mu\big(B(x_i,r_i)\big)^q h(2r_i);\; (c_i, x_i,r_i)_i\in \wt{\wt\Upsilon}_\delta(E)\Big\}\\
 {{\mathcal Q}}^{q,h}_{\mu,0 }(E) &=& \displaystyle\inf_{\delta>0} {{\mathcal Q}}^{q,h}_{\mu, \delta}(E) = \lim_{\delta\to 0}{{\mathcal Q}}^{q,h}_{\mu,\delta }(E)\\
 {\mathcal Q}^{q, h}_{\mu}(E)&=&\displaystyle\inf\Big\{ \sum_{i=1}^\infty  {{\mathcal Q}}^{q,h}_{\mu,0 }(E_i) ; \;\;\; E \subseteq \bigcup_{i=1}^\infty E_i \Big\},
\end{eqnarray*}
 if $E\neq \emptyset$ and ${{\mathcal Q}}^{q,h}_{\mu}(\emptyset) =0$.

\begin{theorem}\label{ralation_R-Q}
Let $\mu\in\mathcal{P}(\X)$, $q \in\mathbb{R}$ and $h \in \F$. Then
 \begin{equation}\label{P-R}
 \PPP_\mu^{q,h}\le \QQQ_\mu^{q,h}\le \RRR_\mu^{q,h}.
 \end{equation}
 \end{theorem}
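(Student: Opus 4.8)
The plan is to compare the three set functions within the single Method~II construction they all share. Each of $\PPP_{\mu}^{q,h}$, $\QQQ_{\mu}^{q,h}$ and $\RRR_{\mu}^{q,h}$ is built from its premeasure in the same two steps, namely $\,\cdot_{\mu,0}=\inf_{\delta>0}\cdot_{\mu,\delta}$ followed by the cover-infimum $\,\cdot_{\mu}(E)=\inf\{\sum_i\cdot_{\mu,0}(E_i):E\subseteq\bigcup_iE_i\}$. Hence it is enough to prove the scale-$\delta$ inequalities $\PPP_{\mu,\delta}^{q,h}(E)\le\QQQ_{\mu,\delta}^{q,h}(E)\le\RRR_{\mu,\delta}^{q,h}(E)$ for every $E\subseteq\X$ and every $\delta>0$; the monotonicity of the construction then carries them to the limit $\delta\to0$ and to the full measures, and the cases $E=\emptyset$ together with the conventions $0^q=\infty$, $0\times\infty=0$ are handled trivially.

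For the left inequality I would check that a $\delta$-packing is the same thing as a weighted $\delta$-packing with all weights equal to $1$. If $(x_i,r_i)_i\in\Upsilon_\delta(E)$ and some $x\in E$ lay in two distinct balls, $\rho(x_i,x)\le r_i$ and $\rho(x_j,x)\le r_j$ with $i\ne j$, then $\rho(x_i,x_j)\le r_i+r_j$ by the triangle inequality, contradicting the packing condition; so each $x\in E$ meets at most one $B(x_i,r_i)$ and $(1,x_i,r_i)_i\in\wt{\wt\Upsilon}_\delta(E)$. Thus every packing sum is admissible for $\QQQ_{\mu,\delta}^{q,h}$, and taking the supremum over $\Upsilon_\delta(E)$ gives $\PPP_{\mu,\delta}^{q,h}\le\QQQ_{\mu,\delta}^{q,h}$.

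The right inequality is the substantive one, and here I would argue by extracting a genuine pseudo-packing from a weighted one. Fix a weighted $\delta$-packing $(c_i,x_i,r_i)_{i\in I}$; I may assume $I$ finite, the general case following by letting the finite sub-sums increase to the supremum. List the indices in order of \emph{increasing} radius and run the obvious greedy selection, keeping an index whenever its centre conflicts with none of the previously kept ones in the pseudo-packing sense. Let $J$ be the resulting maximal pseudo-packing and put $T_j=\{i\in I:\rho(x_i,x_j)\le r_i\}$. Because every discarded index conflicts with an \emph{earlier}, hence smaller-radius, kept index $j$, the inequality $\rho(x_i,x_j)\le\max(r_i,r_j)=r_i$ forces $i\in T_j$; therefore $I=\bigcup_{j\in J}T_j$. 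On the other hand, reading the defining condition of a weighted packing at the point $x_j\in E$ yields $\sum_{i\in T_j}c_i\le1$ for each $j\in J$. Combining the covering with these normalisations should bound $\sum_{i\in I}c_i\,\mu(B(x_i,r_i))^q h(2r_i)$ by a sum indexed over $J$, which in turn is at most $\RRR_{\mu,\delta}^{q,h}(E)$.

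The delicate point — and the step I expect to be the main obstacle — is this final combination. After the covering, one is comparing the weight $\mu(B(x_i,r_i))^q h(2r_i)$ of a discarded ball with that of the kept ball it is charged to, and in an arbitrary metric space with an arbitrary $\mu\in\mathcal P(\X)$ no doubling or regularity estimate is available to relate the $\mu$-masses of two distinct balls; this is precisely the difficulty that the two preceding propositions sidestepped by assuming a doubling measure or an amenable space. The whole point of the present, hypothesis-free argument is to avoid such a comparison altogether, by arranging the extraction so that each summand is measured against the contribution of its \emph{own} ball — for instance by retaining within each cluster $T_j$ an index realising $\max_{i\in T_j}\mu(B(x_i,r_i))^q h(2r_i)$ — after which $\sum_{i\in T_j}c_i\le1$ closes the estimate with no multiplicative constant. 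Verifying that the indices kept this way still form a pseudo-packing is the part I would write out with the greatest care.
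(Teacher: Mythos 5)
Your reduction to the scale-$\delta$ inequality, the lifting through $\inf_\delta$ and the cover-infimum, and your proof of $\PPP_{\mu,\delta}^{q,h}\le\QQQ_{\mu,\delta}^{q,h}$ are all correct and agree with the paper. The genuine gap is exactly where you flagged it, and the maximizer fix does not repair it. Your clusters $T_j=\{i:\rho(x_i,x_j)\le r_i\}$ are not disjoint: an index with large radius (processed last in your increasing-radius order) can conflict with many kept indices simultaneously and hence lie in $T_j$ for many $j\in J$. If that one index realises the maximum in each of those clusters, the bound $\sum_{i\in I}c_i\,\mu(B(x_i,r_i))^qh(2r_i)\le\sum_{j\in J}\max_{i\in T_j}\mu(B(x_i,r_i))^qh(2r_i)$ charges the same ball once per cluster, while a pseudo-packing may contain it only once; the retained maximizers can coincide or be mutually within $\max(r,s)$ of one another, so they need not form a pseudo-packing, and the estimate loses a factor equal to the number of pairwise non-conflicting centres a single ball can cover. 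In a general metric space that number is unbounded --- bounding it is precisely the amenability-to-packing hypothesis used in Proposition 3 of the paper, which a hypothesis-free proof cannot invoke. Concretely: take $q<0$ and one ball $B(x_0,r_0)$ of tiny $\mu$-mass (hence enormous value $\mu(B(x_0,r_0))^qh(2r_0)$) whose centre lies within $r_0$ of points $x_1,\dots,x_M$ that are mutually far apart relative to their own small radii; with small weights this is a legitimate weighted $\delta$-packing, every cluster is maximized by $i=0$, and your right-hand side is $M$ times the best pseudo-packing sum.

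The paper avoids any one-shot charging by an iterated extraction: approximate the weights from below by rationals $\alpha_i/N$, set multiplicities $m_0(i)=\alpha_i$, and repeatedly remove a maximal pseudo-packing $I_j$ (via Lemma \ref{maxi_fin}) from the indices of positive multiplicity, decrementing $m$ on the removed indices. The counting function $\zeta_j(x)=\sum\{m_j(i):\rho(x_i,x)\le r_i\}$ drops by at least $1$ per round at each centre of positive multiplicity, while the weighted-packing normalisation forces $\zeta_0(x_k)\le N$; hence after $N$ rounds all multiplicity is exhausted, and the weighted sum is exhibited as the average over $j=1,\dots,N$ of $N$ genuine pseudo-packing sums, each at most $\RRR_{\mu,\delta}^{q,h}(E)$. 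This yields $\QQQ_{\mu,\delta}^{q,h}(E)\le\RRR_{\mu,\delta}^{q,h}(E)$ with no multiplicative constant. The missing idea in your proposal is this multi-round averaging: no single extraction round, however the representatives are chosen, can absorb the full weighted sum in general.
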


 \begin{proof}

Let $\delta >0$ and $E\subseteq \X$. Since any $\delta$-packing  is a weighted $\delta$-packing then we obtain the first inequality. Now, we will prove the second inequality, for this, we may assume that $\RRR_\mu^{q,h} (E) <\infty$. Suppose that we have shown
\begin{equation}\label{P-R-delta}
 \QQQ_{\mu, \delta}^{q,h} (E) \le \RRR_{\mu, \delta}^{q,h}(E).
 \end{equation}
 Then, for $\epsilon >0$, choose a sequence of sets $E_i$ such that
 $$
 E\subseteq \bigcup_i E_i \qquad \text{and}\qquad  \sum_i \RRR_{\mu, 0}^{q,h}(E_i) \le  \RRR_{\mu}^{q,h}(E)+\epsilon.
 $$
 It follows, using \eqref{P-R-delta}, that
 $$ \QQQ_{\mu}^{q,h} (E) \le \sum_i  \QQQ_{\mu,0}^{q,h} (E_i) \le \sum_i \RRR_{\mu, 0}^{q,h}(E_i)  \le  \RRR_{\mu}^{q,h}(E)+\epsilon$$
 and we get the desire result by letting  $\epsilon $ to $0$. Let us prove \eqref{P-R-delta}.
 Let $l< {\QQQ}_{\mu, \delta}^{q, h}  (E) .$ Choose
$\{c_i, x_i, r_i\}_i $ a weighted  $\delta$-packing of $E$. By choosing $N$ large enough we may approximate $c_i$ by rational $\alpha_i/N$ such that $\alpha_i/N \le c_i$ and $\sum_{i=1}^{\infty} \alpha_i/N   \mu(B(x_i, r_i))^q h(2r_i)> l$. In addition, by relabelling and choosing $n$ sufficiently large we may assume that
\begin{equation}
\sum_{i=1}^{n} \alpha_i/N   \mu(B(x_i, r_i))^q h(2r_i)> l.
\end{equation}
Now, we define the function $m_0 : \{1, \ldots, n\}\to \N_0$ by $ m_0(i)= \alpha_i$, where $\N_0$ is the set of the natural numbers including $0$. We consider the set of indices
$$
J_1 = \big\{ i\in \{1, \ldots, n\}, \; m_0(i)\ge 1 \big\}.
$$
It follows, using Lemma \ref{maxi_fin},  that we can choose $I_1 \subseteq J_1$ so that
$\{(x_i, r_i), i\in I_1\}$ is maximal pseudo-packing from the family of pairs $\{(x_i, r_i), m_0(i) \ge 1\}$ that covers $\{ x_i, m_0(i) \ge 1\}$. Inductively, for $j\ge 1$, we choose $I_j\subseteq J_j$ and define
$$
m_j(i) =
\begin{cases}
m_{j-1}(i) -1 & \text{if} \;  i\in I_j\\
m_{j-1}(i) & \text{otherwise}
\end{cases}
$$
where
$$
J_j =  \big\{ i\in \{1, \ldots, n\}, \; m_{j-1}(i)\ge 1 \big\}.
$$
Now, we define the function, for $j\ge 0$, $\zeta_j : \X\to \N_0$ by
$$
\zeta_j(x) = \sum \{m_j(i) ; \;\; \rho(x_i, x)\le r_j\}.
$$
It is clear, since $I_j$ covers $\{ x_i, m_{j-1}(i)\ge 1\}$, that, for $i\in J_j$, there exists $k\in I_j$ such that $\rho(x_i, x_k)\le r_k$. It follows that, for each $i\in \{ 1, \ldots, n\}$, we have
$$
\zeta_j(x_i) \le \zeta_{j-1}(x_i)-1,
$$
provides that $m_{j-1}(i) \ge 1$. Suppose that $J_N \neq \emptyset $ and let $k\in J_N\subseteq J_{N-1}\ldots \subseteq J_1.$ Thus
\begin{equation}\label{cont}
\zeta_0(x_k) \ge N + \zeta_N(x_k) \ge N+ m_N(k)\ge N+1.
\end{equation}
By definition of the weighted packing, we have
$$
\zeta_0(x_k) = \sum \Big\{ \alpha_i , \; \rho(x_i, x_k)   \le r_i \Big\}\le N \sum
 \Big\{ \alpha_i , \; \rho(x_i, x) \le r_i \Big\} \le N
$$
contradicting \eqref{cont}. Then $J_N=\emptyset$ and
$$
\sum_{j=1}^N (m_{j-1}(i)-m_j(i)) =\alpha_i.
$$
As a consequence,  since for each $j\ge 1$, $\sum_{i\in I_j} \mu(B(x_i, r_i))^q h(2r_i) \le \RRR_{\mu, \delta}^{q,h}(E) $, we have
\begin{eqnarray*}
l< \sum_{i=1}^{n} \alpha_i/N   \mu(B(x_i, r_i))^q h(2r_i) &=&   \sum_{j=1}^N \sum_{i=1}^{n } \frac{m_{j-1}(i)-m_j(i)}{N}   \mu(B(x_i, r_i))^q h(2r_i) \\
&\le & \sum_{j=1}^N  \frac{1}{N}  \RRR_{\mu, \delta}^{q,h}(E) =  \RRR_{\mu, \delta}^{q,h}(E),
\end{eqnarray*}
as desired to get \eqref{P-R-delta}.

\end{proof}

%

 \subsection{Generalized  Hausdorff    $h$-measures }
  Let  $\mu\in\mathcal{P}(\X)$, $ h \in {\mathcal F}$, $q\in \R$ and $E \subseteq \X$. In the following we define the generalized centered Hausdorff $h$-measure ${\mathcal H}^{q,h}_{\mu}$.
Let $\delta >0$, a sequence of  $(x_i, r_i)_i$  is called a centered $\delta-$cover of a set $E$ if, for all $i\ge 1$, $x_i\in E$,  $0< r_i \le \delta$ and $E\subseteq \bigcup_{i=1}^{\infty} B(x_i, r_i) $. We write
\begin{equation*}
\begin{split}
{{\mathcal H}}^{q,h}_{\mu, \delta}(E) =&\displaystyle\inf\Big\{ \sum_i
\mu\big(B(x_i,r_i)\big)^q h \big( 2 r_i\big);\; (x_i,r_i)_i\;\text{is a centered}\\
&\; \delta\text{-cover of}\; E\Big\},
  \end{split}
 \end{equation*}
 if $E\neq \emptyset$ and ${{\mathcal H}}^{q, h}_{\mu, \delta}(\emptyset) =0$.  Now we define,
 $$
 {{\mathcal H}}^{q,h}_{\mu,0 }(E) = \ \displaystyle\lim_{\delta \to 0} {{\mathcal H}}^{q,h}_{\mu, \delta}(E)=\displaystyle\sup_{\delta>0} {{\mathcal H}}^{q,h}_{\mu, \delta}(E)$$
and
$$
{\mathcal H}^{q,h}_{\mu}(E)=\displaystyle\sup_{F\subseteq E}{{\mathcal H}}^{q,h}_{\mu,0} (F). $$
The function ${\mathcal H}^{q,h}_{\mu}$ is metric outer measure and thus measure on the Borel family of subsets of $\X$. The measure
${\mathcal H}^{q,h}_{\mu}$ is of course a multifractal
generalization of the centered Hausdorff measure ${\mathcal C}^t$ and generalized Hausdorff measure  ${\mathcal H}^{q,t}_{\mu}$ \cite{Raymond88, Ol1}. In addition, if $h\in {\mathcal F}_0$ then  ${\mathcal H}^{q,h}_{\mu}\le {\mathcal P}^{q,h}_{\mu}$  \cite{Ol1,Raymond88}. As a consequence, by Proposition \ref{ralation_R-P}, we get $\HHH_\mu^{q,h}\le \RRR_\mu^{q,h}$. Bellow, in Proposition \ref{ralation_R-H}, we will prove that this inequality is true even $h\notin {\mathcal F}_0$.

 \begin{proposition}\label{ralation_R-H}
Let  $\mu\in\mathcal{P}(\X)$, $q \in\mathbb{R}$ and $h \in \F$. Then
$$ \HHH_\mu^{q, h} \le  \RRR_\mu^{q, h}.$$
\end{proposition}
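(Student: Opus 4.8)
The plan is to establish $\HHH_\mu^{q,h} \le \RRR_\mu^{q,h}$ directly from the definitions, without routing through the finite-order hypothesis $h \in \F_0$ (which was needed in Proposition \ref{ralation_R-P}). The essential point is to compare the \emph{covering} structure used in the Hausdorff measure with the \emph{pseudo-packing} structure used in $\RRR$. First I would reduce to comparing the pre-measures on an arbitrary set $E \subseteq \X$: since both $\HHH_\mu^{q,h}$ and $\RRR_\mu^{q,h}$ are built from their respective $0$-level pre-measures via the same countable-cover infimum construction, it suffices to show ${\HHH}_{\mu,0}^{q,h}(E) \le {\RRR}_{\mu,0}^{q,h}(E)$ for every $E$; the passage to the full measures then follows by monotonicity of the infimum over covers. (For $\HHH$ one also has the inner supremum $\sup_{F \subseteq E}$, but this only decreases $\HHH_{\mu}^{q,h}$, so it poses no obstruction.)

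The key step is to bound a centered $\delta$-cover from the Hausdorff side by an associated pseudo-packing. Given a centered $\delta$-cover $(x_i, r_i)_i$ of $E$, the natural idea is to extract from it, or from a suitably chosen subfamily, a pseudo-packing whose $\mu(B(\cdot))^q h(\cdot)$-sum dominates the cover sum. The cleanest route is a greedy Vitali-type selection: I would process the balls $B(x_i, r_i)$ in order of decreasing radius and retain a ball whenever its center lies outside all previously retained balls, i.e.\ retain $B(x_j, r_j)$ only if $\rho(x_i, x_j) > \max(r_i, r_j)$ holds against every retained index; by construction the retained family is a $\delta$-pseudo-packing in the sense of $\wt\Upsilon_\delta(E)$. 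The heart of the matter is then to argue that discarding the non-retained balls does not lose mass in the relevant sum. Here one exploits that a discarded ball has its center covered by a retained ball of at least equal radius, so along the selection one controls the covered portion of $E$; combined with the fact that $\RRR_{\mu,\delta}^{q,h}(E)$ takes a supremum over all pseudo-packings (hence dominates the sum over any single retained pseudo-packing), one obtains the desired inequality at level $\delta$, and then lets $\delta \to 0$.

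The main obstacle I anticipate is precisely this selection argument: unlike the finite-order case, we cannot absorb a bounded blow-up factor $\gamma$ coming from halving radii, so the comparison must be mass-exact rather than up-to-constant. In a general (possibly non-doubling, non-amenable) metric space a greedy pseudo-packing need not efficiently recapture the Hausdorff sum of the full cover, and one cannot appeal to a bounded multiplicity bound as in Proposition \ref{amenable}. I expect the resolution to go through the variational characterization of Theorem \ref{full}: rather than manipulate covers and packings at fixed scale, one reformulates both sides via gauges, writing $\RRR_\mu^{q,h} = \RRR^{q,h}_{\ast,\mu} = \inf_\Delta \RRR^{q,h}_{\Delta,\mu}$, and shows that for any gauge $\Delta$ a centered cover subordinate to $\Delta$ yields a $\Delta$-fine pseudo-packing with comparable sum. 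This gauge-based bookkeeping is exactly what lets one avoid any doubling or finite-order assumption, and I would structure the final write-up around it.
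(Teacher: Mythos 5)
Your reduction to the pre-measures is workable, but your parenthetical about the inner supremum is backwards: $\HHH_\mu^{q,h}(E)=\sup_{F\subseteq E}\HHH_{\mu,0}^{q,h}(F)$ is taken precisely because the centered pre-measure $\HHH_{\mu,0}^{q,h}$ fails to be monotone (centers of a cover of $E$ need not lie in a subset $F$), so the supremum \emph{increases} the pre-measure rather than decreasing the measure. The step is still repairable, but only because $\RRR_{\mu,0}^{q,h}$ \emph{is} monotone: proving $\HHH_{\mu,0}^{q,h}(\wt E)\le\RRR_{\mu,0}^{q,h}(\wt E)$ for every $\wt E\subseteq E_i$ gives $\HHH_\mu^{q,h}(E_i)\le\RRR_{\mu,0}^{q,h}(E_i)$, after which one sums over a near-optimal countable cover $\{E_i\}$ of $E$ and uses subadditivity of $\HHH_\mu^{q,h}$; this is exactly how the paper organizes the outer layer, via an arbitrary $\wt E_i\subseteq E_i$.

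The genuine gap is in your key step, and you diagnose it yourself without curing it. A greedy selection from an arbitrary centered $\delta$-cover produces a pseudo-packing whose balls need not cover $E$ (a discarded ball can protrude beyond the retained ball containing its center), and since $h\notin\F_0$ you cannot enlarge radii by a factor $3$ or $5$ to restore covering; so "discarding does not lose mass" is false in general, and your fallback through Theorem \ref{full} does not help, because the gauge characterization there applies to $\RRR$ and $\PPP$, not to $\HHH$, and your final claim (a centered cover subordinate to a gauge yields a pseudo-packing of comparable sum) is just the unproven assertion restated. The paper's device runs in the opposite direction and bypasses arbitrary covers entirely: for fixed $\delta$, apply Lemma \ref{maxi} to the family $\{(x,\delta):x\in\wt E\}$ to obtain a maximal pseudo-packing $(x_k,\delta)_k$ with the \emph{common} radius $\delta$. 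Maximality forces $\rho(x,x_k)\le\delta$ for every $x\in\wt E$ and some $k$, so the very same family is simultaneously a centered $\delta$-cover and a $\delta$-pseudo-packing, whence
\begin{equation*}
\HHH_{\mu,\delta}^{q,h}(\wt E)\;\le\;\sum_{k}\mu\big(B(x_k,\delta)\big)^q\,h(2\delta)\;\le\;\RRR_{\mu,\delta}^{q,h}(\wt E),
\end{equation*}
mass-exactly and with no constant, and one concludes by letting $\delta\to0$. This equal-radius maximality trick is the idea missing from your proposal; once you have it, no doubling, finite-order, or gauge machinery is needed.
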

\begin{proof}
Let $E\subseteq \X$, we may assume that $ \RRR_\mu^{q, h}(E)<\infty$. Therefore, for $\epsilon >0$, consider $\big\{ E_i\big\}_i$ such that
\begin{equation} \label{HR}
E\subseteq \bigcup_{i=1}^{\infty} E_i \quad \text{and}\quad \sum_{i=1}^{\infty}  \RRR_{\mu, 0}^{q, h}(E_i) \le  \RRR_\mu^{q, h}(E) +\epsilon.
\end{equation}
For each $i=1, 2, \ldots, $ and $\delta>0$ let $\widetilde E_i \subseteq E_i$ and  choose, by Lemma \ref{maxi}, a maximal pseudo-packing $(x_k, \delta)_{k\ge 1}$ (see definition in section \ref{appendix}) from the family of pairs $\big\{ (x, \delta);\;\; x\in \widetilde E_i\big\}$ that cover $\widetilde E_i$. It follows that
$$
{{\HHH}}^{q, h}_{\mu, \delta}(\wt E_i)\le \sum_{k=1}^{\infty} \mu(B(x, \delta))^q h(2\delta)\le {{\RRR}}^{q, h}_{\mu, \delta}(\wt E_i).
$$
Thereby, by  letting  $\delta \to  0$, we obtain  $
{{\HHH}}^{q, h}_{\mu, 0}(\wt E_i) \le {{\RRR}}^{q, h}_{\mu, 0}(\wt E_i)\le {{\RRR}}^{q, h}_{\mu, 0}(E_i)$ and by  arbitrariness of $\wt E_i$ we get
$${{\HHH}}^{q, h}_{\mu}(E_i) \le {{\RRR}}^{q, h}_{\mu, 0}( E_i).$$
Hence, summing over $i$ and using \eqref{HR}, we have
$${{\HHH}}^{q, h}_{\mu}(E) \le {{\RRR}}^{q, h}_{\mu}( E) + \epsilon$$
and the result follows by letting $\epsilon \to  0$.
\end{proof}



\section{Proof of Theorem A } \label{th1}
 We start by proving the first inequality of Theorem A,
\begin{equation} \label{eq1_thA}
{\HHH}_{\mu\times \nu}^{q, hg}(E\times F) \le {\HHH}_{\mu}^{q, h}(E)\; {\RRR}_{\nu}^{q, g}(F).
\end{equation}
\\We may assume that ${\HHH}_{\mu}^{q, h}(E) < \infty$ and ${\RRR}_{\nu}^{q, g}(F) < \infty$. Let $\epsilon >0$ and we consider a sequence $\{F_j\}_{j\ge 1}$ such that
$$
F\subseteq \bigcup_j F_j \quad \text{and}\quad \sum_{j=1}^{\infty} {\RRR}_{\nu,0}^{q, g}(F_j) \le {\RRR}_{\nu}^{q, g}(F)+\epsilon.
$$
Now,  fix $j\ge 1$. For $\delta >0$,  we consider $(x_i, r_i)_{i\ge 1}$ a $\delta$-cover  of $\wt E \subseteq E$  and we set  $B_i := B(x_i, r_i)$, \; $i\ge 1$. Let $\wt F_j \subseteq F_j$. By Lemma \ref{maxi} we can find a maximal pseudo-packing $(y_k, r_i)_k$ from $ \{ (y, r_i); \; y\in \wt F_j\}$ that covers $\wt F_j$. Therefore,
\begin{eqnarray*}
{\HHH}_{\mu\times \nu, \delta}^{q, hg}(B_i\times  \wt F_j)&\le&  \sum_{k=1}^{\infty} \mu(B_i)^q h(2r_i) \nu(B(y_k, r_i))^q g(2r_i)\\
&\le& \mu(B_i)^q h(2 r_i ) {\RRR}_{ \nu, \delta}^{q, g}(\wt F_j)
\end{eqnarray*}
and then
$$
{\HHH}_{\mu\times \nu, \delta}^{q, hg}(\wt E\times \wt F_j) \le  {\RRR}_{ \nu, \delta}^{q, g}(\wt F_j)\sum_{i=1}^{\infty} \mu(B_i)^q h(2 r_i).$$
Now, by taking the infimum over all $\delta$-covering of $\wt E$ and letting $\delta \to 0$, we get
$$
{\HHH}_{\mu\times \nu,0}^{q, hg}(\wt E\times \wt F_j) \le   {\HHH}_{ \mu,0}^{q, h}(\wt E){\RRR}_{ \nu,0}^{q, g}(\wt F_j) \le   {\HHH}_{ \mu}^{q, h}( E){\RRR}_{ \nu,0}^{q, g}(F_j).
$$
Since $\wt E$ and $\wt F_j$ are arbitrarily we obtain
$$
{\HHH}_{\mu\times \nu}^{q, hg}(E\times  F_j) \le   {\HHH}_{ \mu}^{q, h}( E){\RRR}_{ \nu,0}^{q, g}(F_j)
$$
and the result follows since $E\times F\subseteq \bigcup_{j=1}^\infty E\times F_j$.

\bigskip
Now, we will show the second inequality of Theorem A,
\begin{equation} \label{eq2_thA}
{\RRR}_{\mu}^{q, h}(E)   {\HHH}_{\nu}^{q, g}(F) \le  {\RRR}_{\mu\times \nu}^{q, hg}  (E\times F).
\end{equation}
First , we will prove the following lemma  which will be useful to prove  \eqref{eq2_thA}.
\begin{lemma} \label{projection}
Let $E\subseteq \X$, $F\subseteq \Y,$  $h,g \in \F$ and $(\Gamma_i)_i$ a sequence such that $E\times F\subseteq \bigcup_{i=1}^\infty \Gamma_i. $ For $\alpha <   {\HHH}_{\nu}^{q, g}(F)$ and $\delta>0$ such that $ {\HHH}_{\nu, \delta}^{q, g}(F)>\alpha,$  we consider
$$
E_n = \Big\{ x\in E, \; \sum_{i=1}^n {\HHH}_{\nu,\delta}^{q, g}(\Gamma_i^F(x)) \ge \alpha\Big\}
$$
where $\Gamma_i^F(x)= \big\{y \in F,\; (x, y)\in \Gamma_i\big\}.$ Then,
$$
\sum_{i=1}^n {\RRR}_{\mu\times \nu,0}^{q, hg}(\Gamma_i) \ge \alpha  {\RRR}_{\mu,0}^{q, h}(E_n).
$$
\end{lemma}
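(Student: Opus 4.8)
The plan is to prove $\sum_{i=1}^n {\RRR}_{\mu\times \nu,0}^{q, hg}(\Gamma_i) \ge \alpha\, {\RRR}_{\mu,0}^{q, h}(E_n)$ by transferring a pseudo-packing of $E_n$ into the product space, using the sets $\Gamma_i$ as a bridge. The key observation is that for each point $x \in E_n$, the slices $\Gamma_i^F(x)$ carry a guaranteed amount of the $g$-Hausdorff content in $F$, namely their total $\delta$-content is at least $\alpha$. I would exploit this to manufacture, for each ball centered at a point of $E_n$, a corresponding family of balls in $E\times F$ whose weighted sum is controlled from below by $\alpha$ times the contribution of that single ball in $E$.

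**The main construction.** First I would fix $\delta>0$ and take an arbitrary $\delta$-pseudo-packing $(x_k, \rho_k)_k$ of $E_n$; the goal is to bound $\sum_k \mu(B(x_k,\rho_k))^q h(2\rho_k)$ from above by $\alpha^{-1}\sum_{i=1}^n {\RRR}_{\mu\times\nu,\delta}^{q,hg}(\Gamma_i)$. For each fixed $k$, since $x_k \in E_n$ we have $\sum_{i=1}^n {\HHH}_{\nu,\delta}^{q,g}(\Gamma_i^F(x_k)) \ge \alpha$. The point is that ${\HHH}_{\nu,\delta}^{q,g}$ being an \emph{infimum} over centered covers, I can instead work with the dual inequality: for each $i$ I choose a near-optimal centered $\delta$-cover $(y_{i,l}, s_{i,l})_l$ of the slice $\Gamma_i^F(x_k)$, and then form the product balls $B(x_k,\rho_k)\times B(y_{i,l}, s_{i,l})$, adjusting radii so that the product metric $\rho\times\rho'$ behaves correctly (taking the smaller radius, or the $\min$, so that the product ball sits inside $\Gamma_i$). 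The pseudo-packing condition on the $x_k$ in $E$ must be shown to propagate: if $(x_k,\rho_k)$ is a pseudo-packing in $\X$, then the lifted families in $\X\times\Y$ remain pseudo-packings of the $\Gamma_i$, because the separation $\rho(x_k,x_{k'})>\max(\rho_k,\rho_{k'})$ forces separation in the max-metric regardless of the second coordinate.

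**Assembling the bound.** Once the lifted families are genuine $\delta$-pseudo-packings of the $\Gamma_i$, the multiplicativity $h(2\rho_k)\,g(2s)=(hg)(\cdots)$ together with $\mu(B(x_k,\rho_k))^q\,\nu(B(y,s))^q = (\mu\times\nu)(B(x_k,\rho_k)\times B(y,s))^q$ lets me recognize each term as a legitimate summand in the definition of ${\RRR}_{\mu\times\nu,\delta}^{q,hg}(\Gamma_i)$. Summing over $l$ and using that the chosen cover nearly realizes ${\HHH}_{\nu,\delta}^{q,g}(\Gamma_i^F(x_k))$ yields, for each $k$,
$$
\mu(B(x_k,\rho_k))^q h(2\rho_k)\sum_{i=1}^n {\HHH}_{\nu,\delta}^{q,g}(\Gamma_i^F(x_k)) \le \sum_{i=1}^n {\RRR}_{\mu\times\nu,\delta}^{q,hg}(\Gamma_i) \;(\text{contribution at } x_k),
$$
and invoking $\sum_i {\HHH}_{\nu,\delta}^{q,g}(\Gamma_i^F(x_k))\ge\alpha$ gives $\alpha\,\mu(B(x_k,\rho_k))^q h(2\rho_k)$ on the left. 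Summing over $k$ and taking the supremum over pseudo-packings gives $\alpha\,{\RRR}_{\mu,\delta}^{q,h}(E_n)\le \sum_{i=1}^n {\RRR}_{\mu\times\nu,\delta}^{q,hg}(\Gamma_i)$, and then $\delta\to 0$ finishes the proof.

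**The main obstacle.** The delicate point is that the various $\Gamma_i$ are not disjoint and a single pseudo-packing ball at $x_k$ must be charged simultaneously against all $n$ indices $i$; I must be careful that the product families built for different $i$ do not interfere, and that each separately constitutes a valid pseudo-packing of its own $\Gamma_i$ (this is why the inequality carries the sum $\sum_{i=1}^n$ on both sides rather than a single term). The second subtlety is the interchange of the infimum defining ${\HHH}_{\nu,\delta}^{q,g}$ with the construction: because I need a \emph{lower} bound on the product measure but ${\HHH}$ is defined by infimum, the near-optimal cover must be chosen so that its weighted sum does not undershoot the content by more than a controllable $\e$, and these errors must be summable over $k$ and $i$; handling this uniformly while keeping all radii below $\delta$ is where the technical care concentrates.
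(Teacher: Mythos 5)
Your overall architecture matches the paper's (lift a pseudo-packing of $E_n$ into the product, use $\sum_{i\le n}{\HHH}_{\nu,\delta}^{q,g}(\Gamma_i^F(x_k))\ge\alpha$ fiberwise, sum, take suprema, let the scale go to $0$), but the central device is missing, and your substitute for it breaks down at exactly the point you gesture at. You build the fiber families from near-optimal centered covers $(y_{i,l},s_{i,l})$ of the slices. Two things go wrong. First, in the max metric a ball of $\X\times\Y$ is $B\big((x,y),r\big)=B(x,r)\times B(y,r)$ with the \emph{same} radius in both coordinates; your sets $B(x_k,\rho_k)\times B(y_{i,l},s_{i,l})$ with $\rho_k\ne s_{i,l}$ are not balls, and replacing both radii by the minimum destroys the factorization $(\mu\times\nu)(B)^q\,(hg)(2r)=\mu(\cdot)^q\,\nu(\cdot)^q\,h(\cdot)\,g(\cdot)$ on which your bookkeeping relies. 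Second, and more fatally, a cover is not a packing: your claim that the base separation $\rho(x_k,x_{k'})>\max(\rho_k,\rho_{k'})$ forces separation in the product ``regardless of the second coordinate'' is only true for constituents with \emph{distinct} base points. All the constituents $(x_k,y_{i,l})$, $l=1,2,\dots$, attached to a single $x_k$ share the first coordinate, so their mutual separation must come entirely from the second coordinate --- and the balls of an arbitrary (even near-optimal) centered cover of $\Gamma_i^F(x_k)$ may overlap arbitrarily. Hence the lifted family is in general not a pseudo-packing of $\Gamma_i$, its weight sum is not a competitor in the supremum defining ${\RRR}_{\mu\times\nu,\delta}^{q,hg}(\Gamma_i)$, and the chain collapses there. (The interference you do worry about, between different indices $i$, is harmless, since each $i$ is charged to its own term ${\RRR}_{\mu\times\nu}^{q,hg}(\Gamma_i)$.)

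The paper closes this gap with Lemma \ref{maxi} (Howroyd): for each constituent $(x_j,r_j)$ of the pseudo-packing of $E_n$ and each $i$, it chooses from the family $\big\{(y,r_j):\,y\in\Gamma_i^F(x_j)\big\}$ --- all at the \emph{fixed} radius $r_j$ --- a maximal pseudo-packing $H(i,j)$ that simultaneously \emph{covers} $\Gamma_i^F(x_j)$. This single object does all three jobs at once: the equal radii make each lifted set a genuine ball $B\big((x_j,y),r_j\big)$ whose weight factorizes exactly; the pseudo-packing property within each fiber, combined with the base separation across fibers, makes the union over $j\in L(i)$ a legitimate pseudo-packing of $\Gamma_i$; and the covering property gives $\sum_{(y,r)\in H(i,j)}\nu(B(y,r))^q g(2r)\ge{\HHH}_{\nu,\gamma}^{q,g}(\Gamma_i^F(x_j))\ge{\HHH}_{\nu,\delta}^{q,g}(\Gamma_i^F(x_j))$ with no $\e$-bookkeeping at all --- note that for this lower bound you never needed near-optimality, since \emph{every} centered cover dominates the infimum; what you needed was a cover that is also a pseudo-packing at the prescribed radius. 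A secondary slip: the paper runs two scales, taking a $\gamma$-pseudo-packing of $E_n$ with $0<\gamma\le\delta$ while $E_n$ is defined through the fixed $\delta$ of the statement (using the monotonicity ${\HHH}_{\nu,\gamma}^{q,g}\ge{\HHH}_{\nu,\delta}^{q,g}$), and then lets $\gamma\to0$ to reach ${\RRR}_{\mu\times\nu,0}^{q,hg}$; in your sketch the packing scale and the $\delta$ defining $E_n$ are conflated, and ``letting $\delta\to0$'' is not available because $\delta$ is fixed by the hypothesis ${\HHH}_{\nu,\delta}^{q,g}(F)>\alpha$.
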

\begin{proof}
Let $n$ be a positive integer and $0< \gamma\le  \delta$. We consider $(x_j, r_j)_{j\ge 1}$ a $\gamma$-pseudo-packing of $E_n$ and we define, for each $i=1,2\ldots,$ the set
$$
L(i) = \big\{j\ge 1,\; \Gamma^F_i(x_j) \neq \emptyset\big\}
$$
so that $(x_j, r_j)_{j\in L(i)}$ is a $\gamma$-pseudo-packing of the projection of $\Gamma_i$ onto $\X$. Now, fix $i=1, 2,\ldots,$ then for each $x_j$ such that $j\in L(i)$ we can find, by Lemma \ref{maxi}, a maximal pseudo-packing $H(i,j)$ from $\big\{ (y,r_j),\;  y\in \Gamma_i^F(x_j) \big\}$ that covers $\Gamma_i^F(x_j)$. Doing this for each $j\in L(i)$ provides a pseudo-packing of $\Gamma_i$. Therefore,

$$
\sum_{(y,r)\in H(i,j)} \nu(B(y,r))^q g(2r) \ge  {\HHH}_{ \nu, \gamma}^{q, g}(\Gamma_i^F(x_j))\ge  {\HHH}_{ \nu, \delta}^{q, g}(\Gamma_i^F(x_j))
$$
since $r_j\in (0, \gamma] $ and $r_j =r $ if $(y, r)\in H(i,j)$. Now define
$$
I(j,n)  = \{ i\ge 1, \; x_j \in \Gamma_i \;\text{and }\; i\le n\}.
$$
Then,
\begin{eqnarray*}
\sum_{i=1}^n \sum_{j\in L(i)} \sum_{y\in H(i,j)}  & & \mu(B(x_j, r_j))^q \nu(B(y,r_j))^qh(2r_j)g(2r_j)\\
&=& \sum_{j=1}^{\infty} \sum_{i\in I(j,n)} \sum_{y\in H(i,j)}   \mu(B(x_j, r_j))^q \nu(B(y,r_j))^qh(2r_j)g(2r_j)\\
&\ge & \sum_{j=1}^{\infty} \sum_{i\in I(j,n)}   \mu(B(x_j, r_j))^q {\HHH}_{ \nu, \delta}^{q, g}(\Gamma_i^F(x_j))\\
&\ge& \alpha \sum_{j=1}^{\infty}\mu(B(x_j, r_j))^q h(2r_j).
\end{eqnarray*}
\noindent Thereby,
$$
\sum_{i=1}^n \RRR_{\mu\times \nu, \gamma}^{q, hg} (\Gamma_i) \ge \alpha \sum_{j=1}^{\infty}\mu(B(x_j, r_j))^q h(2r_j)
$$
and by taking the supremum over all $\gamma$-pseudo-packing of $E_n$,
$$
\sum_{i=1}^n \RRR_{\mu\times \nu, \gamma}^{q, hg}(\Gamma_i) \ge \alpha \RRR_{\mu, \delta}^{q, h}(E_n ) \ge \alpha \RRR_{\mu, 0}^{q, h}(E_n ).$$
Finally we get the result, by taking the limit as $\gamma\to 0$.
\end{proof}

\hspace{-0.5cm} Now, we may assume that $ {\RRR}_{\mu\times \nu}^{q, hg}(E\times F) < \infty$ and ${\HHH}_{\nu}^{q, g}(F)>0$. Let $\epsilon >0$ and consider $\{ \Gamma_i\}_i$ of subsets of $\X\times \Y$ such that
 \begin{equation}\label{R-H-R}
 E\times F\subseteq \bigcup_{i=1}^\infty \Gamma_i\quad \text{and}\quad \sum_{i=1}^\infty {\RRR}_{\mu\times \nu,0}^{q, hg}(\Gamma_i) \le {\RRR}_{\mu\times \nu}^{q, hg}(E\times F)+\epsilon.
 \end{equation}
Fix  $\alpha <   {\HHH}_{\nu}^{q, g}(F)$ and choose $\delta>0$ such that $ {\HHH}_{\nu, \delta}^{q, g}(F)>\alpha.$  we consider the set
$$
E_n = \Big\{ x\in E, \; \sum_{i=1}^n {\HHH}_{\nu,\delta}^{q, g}(\Gamma_i^F(x)) \ge \alpha\Big\},
$$
where  $\Gamma_i^F(x)= \big\{y \in F,\; (x, y)\in \Gamma_i\big\}.$ Therefore, we have
$$
\sum_{i=1}^\infty {\HHH}_{\nu,\delta}^{q, g}(\Gamma_i^F(x)) \ge  {\HHH}_{\nu,\delta}^{q, g}(F) > l,
$$
for all $x\in E$. Then, $E_n \nearrow E$ and, by Lemma \ref{projection}, we have
$$
\sum_{i=1}^n {\RRR}_{\mu\times \nu,0}^{q, hg}(\Gamma_i) \ge \alpha  {\RRR}_{\mu,0}^{q, h}(E_n).
$$
Taking the limit as $n\to \infty$, we obtain, using \eqref{R-H-R}, that
$$ {\RRR}_{\mu\times \nu}^{q, hg}(E\times F)+\epsilon \ge \alpha  {\RRR}_{\mu}^{q, h}(E).$$
Since this is true for arbitrarily $\alpha <   {\HHH}_{\nu}^{q, g}(F)$ and $\epsilon >0$ we deduce the desired result.


\section{ Proofs of Theorems B and C}  \label{application}

Let $E \subseteq \X$ and $\beta $ is a collection  of   constituents such that $x\in E$ for each  $(x,r) \in\beta$.  The collection $ \beta$ is said to be   fine cover of $E$ if,  for every $x\in E$ and every $\delta  > 0,$ there exists $r > 0 $ such that $r < \delta$ and $(x, r) \in \beta$.

 \begin{lemma}\cite[Theorem 3.1]{Edgar07} \label{vitali_metric}
 Let $ \X$  be a metric space,  $E\subseteq \X$ and $\beta $ be a fine cover of $E$. Then there exists either
 \begin{enumerate}
 \item an infinite  packing $\{(x_i, r_i)\} \subseteq \beta$ of $E$ such that $\inf r_i>0$,
 \item a countable centered closed ball packing $\{(x_i, r_i)\} \subseteq \beta$ such that for all $n\in \N$,
 $$
 E \subseteq \bigcup_{i=1}^n B(x_i, r_i) \cup \bigcup_{i=n+1}^\infty B(x_i, 3 r_i).
 $$
 \end{enumerate}
 \end{lemma}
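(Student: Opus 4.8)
The plan is to produce the packing by a greedy selection and to read off the dichotomy (1)/(2) from the behaviour of the supremum of the radii that remain available. Concretely, I would build the balls $(x_k,r_k)$ by recursion: having chosen pairwise disjoint balls $(x_1,r_1),\dots,(x_{k-1},r_{k-1})$ from $\beta$, set
$$d_k=\sup\Big\{r:\ (x,r)\in\beta,\ B(x,r)\cap B(x_i,r_i)=\emptyset\ \text{for all}\ i<k\Big\},$$
and pick an available ball $(x_k,r_k)\in\beta$ with $r_k>d_k/2$ (if no ball is available the process stops, which only helps). Since enlarging the family of chosen balls can only shrink the admissible set, the sequence $d_k$ is non-increasing, so $d_\infty:=\lim_k d_k$ exists. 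By construction the chosen balls are pairwise disjoint, i.e. $\rho(x_i,x_j)>r_i+r_j$, so they form a packing in the sense used in this paper, centered at points of $E$.

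I would then split on the value of $d_\infty$. If $d_\infty>0$, then $r_k>d_k/2\ge d_\infty/2>0$ for every $k$, so the process does not terminate and $\{(x_i,r_i)\}$ is an infinite packing with $\inf_i r_i\ge d_\infty/2>0$; this is exactly alternative (1). If $d_\infty=0$ (in particular whenever the process terminates after finitely many steps), I claim the chosen balls satisfy the covering estimate in alternative (2), and it remains to verify it.

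To verify the covering, fix $n\in\N$ and $x\in E$ with $x\notin\bigcup_{i\le n}B(x_i,r_i)$. Each of these finitely many closed balls omits $x$, so $\eta:=\min_{i\le n}\big(\rho(x,x_i)-r_i\big)>0$. Using that $\beta$ is a fine cover, choose $(x,r)\in\beta$ with $r<\eta$; a one-line triangle-inequality check (if $z\in B(x,r)\cap B(x_i,r_i)$ then $\rho(x,x_i)\le r+r_i<\eta+r_i\le\rho(x,x_i)$) shows $B(x,r)$ is disjoint from $B(x_i,r_i)$ for every $i\le n$. Were $B(x,r)$ disjoint from \emph{all} chosen balls, then $(x,r)$ would be admissible at every stage $k$, forcing $d_k\ge r>0$ and contradicting $d_k\to0$; hence $B(x,r)$ meets some chosen ball, and I let $k$ be the least index with $B(x,r)\cap B(x_k,r_k)\neq\emptyset$. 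Minimality forces $k>n$ and shows $(x,r)$ was admissible at stage $k$, so $r\le d_k<2r_k$. Taking $z\in B(x,r)\cap B(x_k,r_k)$ gives $\rho(x,x_k)\le r+r_k<3r_k$, that is $x\in B(x_k,3r_k)$ with $k>n$, which is precisely what alternative (2) demands.

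The main obstacle, and the place where the geometry really enters, is the covering step rather than the selection. The constant $3$ (rather than the usual $5$) is available precisely because the balls are \emph{centered} at points of $E$, so crossing from $x$ to $x_k$ through a common point costs only $r+r_k$; and the reduction to two clean alternatives hinges on the monotonicity of $d_k$, which is what lets us dispense with any doubling or finite-order hypothesis on $\X$. The only genuinely delicate bookkeeping is forcing $k>n$: one must first shrink $r$ below the gap $\eta$ so that the first chosen ball met by $B(x,r)$ cannot lie among the initial $n$, and this is exactly what makes the two unions in (2) dovetail. The countable recursion itself needs only dependent choice and causes no trouble in a general separable metric space.
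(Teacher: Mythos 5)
The paper itself offers no proof of this lemma: it is quoted from Edgar \cite{Edgar07} (Theorem 3.1), whose argument is precisely your greedy half-maximal-radius selection, so your overall architecture (the monotone sequence $d_k$, the dichotomy on $d_\infty$, the $\eta$-trick forcing $k>n$, and $r\le d_k<2r_k$ producing the constant $3$) is the right one. But there is a genuine error, and it sits exactly at the point this paper is most sensitive to: you take admissibility to mean ball-disjointness, $B(x,r)\cap B(x_i,r_i)=\emptyset$, and then assert that pairwise disjointness of the chosen balls is the same as $\rho(x_i,x_j)>r_i+r_j$. In a general metric space this equivalence fails: separation implies disjointness, but disjoint closed balls need not satisfy $\rho(x_i,x_j)> r_i+r_j$. (The converse does hold in normed spaces, where a point on the segment between the centers witnesses it, but not in ultrametric-type spaces such as the Davies product space $\mathcal{X}$ built in Section \ref{Prel}; the paper's remark on relative packings and the inequality $\PPP_\mu^{q,h}\le\widetilde\PPP_\mu^{q,h}$ record precisely this distinction.) So, as written, your recursion only yields a relative packing, not a packing in the sense the lemma asserts and in the sense the paper needs: in the proof of Theorem \ref{densite}(3), the family produced by Lemma \ref{vitali_metric} must satisfy $\sum_i\mu(B(x_i,r_i))^q\,h(2r_i)\le \PPP^{q,h}_{\Delta,\mu}(E)$, which requires the separation condition; with mere disjointness you would only bound the larger quantity attached to $\widetilde\PPP$, and the intended inequality would point the wrong way.

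The repair is a one-line change that turns your argument into Edgar's: define admissibility at stage $k$ by $\rho(x,x_i)>r+r_i$ for all $i<k$, so the selected family is a packing by construction. Monotonicity of $d_k$ is unaffected. In the covering step, $x\notin B(x_i,r_i)$ gives $\rho(x,x_i)>r_i$ for $i\le n$, so choosing $r<\eta=\min_{i\le n}\bigl(\rho(x,x_i)-r_i\bigr)$ yields $\rho(x,x_i)>r+r_i$ for those $i$; hence the least index $k$ at which $(x,r)$ conflicts --- now meaning $\rho(x,x_k)\le r+r_k$ --- satisfies $k>n$, admissibility at stage $k$ gives $r\le d_k<2r_k$, and therefore $\rho(x,x_k)\le r+r_k<3r_k$ directly, with no witness point $z$ needed. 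You should also dispose explicitly of the degenerate case $d_k=\infty$ for all $k$ (pick $r_k>1$ at every stage and land in alternative (1)); the rest of your bookkeeping, including the terminating case, then goes through verbatim.
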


Let  $\nu\in \mathcal{P}(\X)$, we say that $\nu$ has the strong-Vitali property if,   for any Borel set $E \subseteq \X$ with  $\nu(E) < \infty$ and any fine cover $\beta$ of $E,$ there exists a countable packing $\pi \subset \beta$ of $E$  such that
 $$ \nu(E \backslash \bigcup_{(x,r) \in \pi} B(x,r))=0.$$
We say that the metric space $\X$ has the stong-Vitali property if and only if every finite Borel measure on $X$ has the  stong-Vitali property. If $\X$ is the Euclidean space $\R^n$ then every finite Borel measure   has the strong-Vitali property \cite{Besicovitch45, Edgar98}. Unfortenatuly,  the strong Vitali
property fails for some measures in some metric spaces. For this,  we will assume this property when required which  is not a restrictive assumption. The interested reader is referred to \cite{Larman67, Haase90}  for more discussion.\\

Recall that $ \RRR_\mu^{q,h} \le K \PPP_\mu^{q,h} $  \;  if  $\X$ is amenable to packing by Proposition \ref{amenable} or  if    $\mu\in\mathcal{P}_D(\X)$  and $h \in \F_0$ by Proposition \ref{ralation_R-P}.  In the following, we will modify slightly the construction of the  pseudo-packing $h$-measure $\RRR_\mu^{q,h}$ to obtain new fractal measure $r_\mu^{q,h}$ equal to $\PPP_\mu^{q,h}.$  This new measure is obtained by using the class of all pseudo-packing  such that the intersection of any two balls  of them contains no point of $E$.
More precisely,  $ (x_i,r_i)_i$, $x_i\in E$ and $ r_i  >0$, is a   $\delta$-weak-pseudo-packing of $E$ if and only if, for all $i,j = 1, 2,\ldots $,  we have $r_i \le \delta$ and for all $i\neq j$,
 $$\rho(x_i, x_j) >\max(r_i, r_j) \quad \text{and}~~ \quad B(x_i, r_i) \cap B(x_j,r_j) \cap E=\emptyset.$$
We denote by $\wt\Upsilon_\delta'(E)$ the set of all $\delta$-weak-pseudo-packing of $E$. Then, the weak-pseudo-packing $h$-measure $ {r}^{q, h}_{\mu}$ is defined by
\begin{eqnarray*}
{{  r}}^{q, h}_{\mu, \delta}(E) &=&\displaystyle\sup\Big\{ \sum_i
\mu\big(B(x_i,r_i)\big)^q h(2r_i);\; (x_i,r_i)_i\in \wt\Upsilon_\delta'(E)\Big\}\\
 {{ r}}^{q,h}_{\mu,0 }(E) &=& \displaystyle\inf_{\delta>0} {{ r}}^{q,h}_{\mu, \delta}(E) = \lim_{\delta\to 0}{{  r}}^{q,h}_{\mu,\delta }(E)\\
 {  r}^{q, h}_{\mu}(E)&=&\displaystyle\inf\Big\{ \sum_{i=1}^\infty  {{  r}}^{q,h}_{\mu,0 }(E_i) ; \;\;\; E \subseteq \bigcup_{i=1}^\infty E_i \Big\},
\end{eqnarray*}
 if $E\neq \emptyset$ and ${{ r}}^{q,h}_{\mu}(\emptyset) =0$.
The weak-pseudo-packing measure was first adopted in \cite{Raymond88}.
\subsection{Densities}\label{density}

In the following we establish a new version of density theorem with respect to the generalized  packing measure which will be useful to prove or main result in this section (Theorem B).
  Let $\nu,~\mu \in \mathcal{P}(\X)$,~ $x \in supp(\mu)$, $q \in \R$ and $h \in {\mathcal F}$, we define the lower $(q,h)$-density  at $x$ with respect to $\mu$ by
$$\displaystyle\underline{D}_{\mu}^{~q,h}(x,~\nu) = \underset{r \searrow 0}{\liminf}~\displaystyle\frac{\nu  \left( B(x,~r) \right)}{\mu\left( B(x,~r) \right)^{q} h\left( 2r \right)}. $$

\begin{theorem}\label{densite}

Let $(\X, \rho)$ be a metric space, $q\in \R$,  $h \in {\mathcal F}$, $\mu, \nu \in {\mathcal P}(\X)$   and $E $ be a Borel subset of $\supp \mu$.
\begin{enumerate}
\item  We have
\begin{equation} \label{desP_Q}
\PPP^{q,h}_{\mu}(E) \inf_{x\in E} \underline{D}^{q,h}_{\mu}(x,\nu)\leq \nu(E),
\end{equation}
where we take the lefthand side to be $0$ if one of the factors is
zero.
\item If $\nu$ has the strong-Vitali property, then
\begin{equation} \label{desP_RR}
\nu(E) \leq \PPP^{q,h}_{\mu}(E) \; \sup_{x\in E} \underline{D}^{q,h}_{\mu}(x,\nu),
\end{equation}
where we take the righthand side to be $\infty$ if one of the
factors is $\infty$.
\item Assume that  $\mu\in {\mathcal P}_0(\X)$ and $h\in \F_0$, then even $\nu$ fails the strong Vitali-property,
\begin{equation} \label{desP_QQQ}
\nu(E) \leq C \PPP^{q,h}_{\mu}(E) \; \sup_{x\in E}
\underline{D}^{q,h}_{\mu}(x,\nu),
\end{equation}
for some constant $C>0$, where we take the righthand side to be $\infty$ if one of the
factors is $\infty$.
\end{enumerate}
\end{theorem}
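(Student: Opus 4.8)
The plan is to establish the three inequalities separately, in each case reducing the regularised measure $\PPP^{q,h}_\mu$ either to the Henstock--Thomas variation (via Theorem \ref{full}) or to its premeasure $\PPP^{q,h}_{\mu,0}$, and then transferring the density bound ball by ball across a disjoint packing. For \eqref{desP_Q} I would work through the variation characterisation $\PPP^{q,h}_\mu=\PPP^{q,h}_{\ast,\mu}$. Assuming $\inf_{x\in E}\underline{D}^{q,h}_{\mu}(x,\nu)=:d>0$ (otherwise the left side is $0$ and nothing is to prove), fix $0<c<d$ and, by outer regularity of the finite Borel measure $\nu$, an open $U\supseteq E$ with $\nu(U)\le\nu(E)+\e$. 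For each $x\in E$ the definition of the lower density provides $\Delta(x)>0$ with $B(x,\Delta(x))\subseteq U$ and $c\,\mu(B(x,r))^q h(2r)\le\nu(B(x,r))$ for all $r<\Delta(x)$. Since the balls of any $\Delta$-fine packing are pairwise disjoint and contained in $U$, summing gives $c\sum_{(x,r)\in\pi}\mu(B(x,r))^q h(2r)\le\nu(U)\le\nu(E)+\e$, hence $c\,\PPP^{q,h}_{\Delta,\mu}(E)\le\nu(E)+\e$. Taking the infimum over gauges, invoking Theorem \ref{full}, and letting $\e\to0$, $c\nearrow d$ yields \eqref{desP_Q}.

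For \eqref{desP_RR} I would first prove the localised bound $\nu(A)\le c\,\PPP^{q,h}_{\mu,0}(A)$ for every Borel $A\subseteq E$ and every $c>\sup_{x\in E}\underline{D}^{q,h}_{\mu}(x,\nu)$ (the case $\sup=\infty$ being trivial). For fixed $\delta>0$ the family $\beta_\delta=\{(x,r):x\in A,\ r<\delta,\ \nu(B(x,r))<c\,\mu(B(x,r))^q h(2r)\}$ is a fine cover of $A$, because the liminf defining the density is $<c$; the strong-Vitali property then produces a packing $\pi\subseteq\beta_\delta$ of $A$ with $\nu\big(A\setminus\bigcup_{(x,r)\in\pi}B(x,r)\big)=0$, so that $\nu(A)\le\sum_{(x,r)\in\pi}\nu(B(x,r))<c\sum_{(x,r)\in\pi}\mu(B(x,r))^q h(2r)\le c\,\PPP^{q,h}_{\mu,\delta}(A)$. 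Letting $\delta\to0$ gives the localised bound, and a routine covering and monotonicity argument (applied to $A=E\cap E_i$ over any cover $E\subseteq\bigcup_i E_i$) upgrades it to $\nu(E)\le c\,\PPP^{q,h}_\mu(E)$; finally $c\searrow\sup\underline{D}^{q,h}_{\mu}(\cdot,\nu)$.

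Part \eqref{desP_QQQ} follows the same scheme but replaces the strong-Vitali step by the covering dichotomy of Lemma \ref{vitali_metric}, applied to the still-fine cover $\{(x,r):x\in A,\ r<\delta,\ \nu(B(x,3r))<c\,\mu(B(x,3r))^q h(6r)\}$ (fineness holds because the density condition is met along a sequence $r\to0$, hence also along $3r\to0$). In alternative (2) one obtains a packing $\{(x_i,r_i)\}$ with $A\subseteq\bigcup_i B(x_i,3r_i)$, so $\nu(A)\le\sum_i\nu(B(x_i,3r_i))<c\sum_i\mu(B(x_i,3r_i))^q h(6r_i)$; for $\delta$ small the doubling of $\mu$ (through $P_3(\mu)<\infty$) and the finite order of $h$ give $\mu(B(x_i,3r_i))^q h(6r_i)\le C\,\mu(B(x_i,r_i))^q h(2r_i)$ with $C$ depending only on $P_3(\mu)$, $\gamma$ and the sign of $q$, and since $\{(x_i,r_i)\}$ is a $\delta$-packing the right side is $\le cC\,\PPP^{q,h}_{\mu,\delta}(A)$. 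Letting $\delta\to0$ and rerunning the covering argument of part (2) yields \eqref{desP_QQQ} with the constant $C$.

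The main obstacle will be the bookkeeping in part \eqref{desP_QQQ}: controlling the $3r$-enlargement uniformly across both signs of $q$ (for $q<0$ the enlargement is automatic, for $q\ge0$ it consumes the doubling and finite-order constants), and disposing cleanly of alternative (1) of Lemma \ref{vitali_metric}, which produces an infinite $\delta$-packing with radii bounded below. For $q\le0$ this alternative already forces $\PPP^{q,h}_{\mu,\delta}(A)=\infty$ and the inequality is trivial, while for $q>0$ it must be excluded or absorbed using the finiteness of $\mu$ and $\nu$ together with doubling; this is the delicate point of the argument. The secondary, but routine, difficulty in parts (2)--(3) is the passage from the scale-$\delta$ premeasure $\PPP^{q,h}_{\mu,0}$ to the regularised measure $\PPP^{q,h}_\mu$, which is handled by monotonicity of the premeasure on subsets.
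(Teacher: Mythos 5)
Your proposal is correct in substance and follows the paper's strategy in all three parts, so the comparison is mostly about bookkeeping. Part (1) is essentially verbatim the paper's argument: an open $V\supseteq E$ with $\nu(V)<\nu(E)+\e$, a gauge $\Delta$ encoding both the density bound and $\Delta(x)<\rho(x,\X\setminus V)$, disjointness of the balls of a $\Delta$-fine packing, and the identity $\PPP^{q,h}_{\mu}=\PPP^{q,h}_{\ast,\mu}$ of Theorem \ref{full}. In part (2) the paper never descends to the scale-$\delta$ premeasure: it takes the fine cover $\beta$ cut off by an arbitrary gauge $\Delta$, extracts the strong-Vitali packing to get $\nu(E)\le\gamma\,\PPP^{q,h}_{\Delta,\mu}(E)$, and then infimizes over gauges via Theorem \ref{full}; your route through $\nu(A)\le c\,\PPP^{q,h}_{\mu,0}(A)$ plus a countable-cover argument proves the same thing, but costs a measurability patch (the pieces $E\cap E_i$ of an arbitrary cover need not be Borel, while the strong-Vitali property as defined in the paper is stated for Borel sets), which the gauge formulation avoids entirely. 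In part (3) your scheme --- fine cover by the $3r$-enlarged density condition, Lemma \ref{vitali_metric}, then doubling and finite order to get $\mu(B(x,3r))^{q}h(6r)\le C\,\mu(B(x,r))^{q}h(2r)$ with a case split on the sign of $q$ --- is exactly the paper's, which takes $C=\max(C_2,C_1C_2)$. On the one step you leave open, alternative (1) of Lemma \ref{vitali_metric}, the paper does what you sketch: it assumes WLOG $\PPP^{q,h}_{\Delta,\mu}(E)<\infty$, and an infinite $\Delta$-fine packing with $\inf_i r_i>0$ contradicts this when $q\le 0$, since then every term satisfies $\mu(B(x_i,r_i))^{q}h(2r_i)\ge h(2\inf_j r_j)>0$; for $q>0$ this divergence argument does not apply (the factors $\mu(B(x_i,r_i))^{q}$ may tend to $0$), and the paper asserts the exclusion without further justification --- so the ``delicate point'' you flag is genuinely delicate, and is not resolved any more fully in the paper than in your sketch.
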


\begin{proof}
\begin{enumerate}
\item We begin with the proof of \eqref{desP_Q}. Assume that  $\inf_{x\in E} \underline{D}^{q,t}_{\mu}(x,\nu) >0.$  Choose $\gamma $ such that  $0< \gamma <\underline{D}^{q, h}_{\mu}(x,\nu)$ for all $x\in E$ and let $\varepsilon >0$.
Then, there is an open set $V$ such that  $E \subseteq V $  and $\nu(V ) < \nu(E) + \varepsilon.$ For $x \in E,$ let $\Delta(x) > 0$ be so small  such that
$$\frac{\nu(B(x,r))}{\mu(B(x,r))^q \; h(2r)} > \gamma $$
for all $r< \Delta(x)$ and $\Delta(x)< \rho (x, \X \backslash V).$
Then $\Delta$ is a gauge for $E.$ Now, consider $\pi$ to be a $\Delta$-fine packing of $E.$ Then
$\displaystyle\bigcup_{(x,r)\in \pi} B(x,r)$ is contained in $V$ and
$$\sum_{(x,r)\in\pi} \mu(B(x,r))^q \; h(2r) <\frac{1}{\gamma} \sum_{\pi}\nu(B(x,r)) \leq \frac{1}{\gamma} \nu (V).$$
This shows that
$$\PPP^{q,h}_{\mu} (E)\leq \PPP^{q, h}_{\Delta,\mu}(E) \leq \frac{1}{\gamma} \nu(V)\leq \frac{1}{\gamma} (\nu(E)+\varepsilon).$$
Let $\varepsilon \to 0$  to obtain $\gamma \, \PPP^{q, h}_{\mu}(E) \leq \nu(E)$. Since $\gamma $ is arbitrarily small then $\underline{D}^{q, h}_{\mu}(x,\nu)$  we get the desired result.\\
\item Suppose that  $\nu$ has the strong-Vitali property and we will prove \eqref{desP_RR}. For this,
we may assume that $\sup_{x\in E} \underline{D}^{q,h}_{\mu}(x,\nu)< \infty.$ Let $\Delta$ be a gauge on $E$ and  $\gamma<\infty$ such that
$\underline{D}^{q,h}_{\mu}(x,\nu)< \gamma$ for all $x\in E.$
Then
$$\beta =\Big\{ \; (x,r);x\in E, \; r< \Delta(x)\;\; \text{and}\;\; \frac{\nu(B(x,r))}{\mu(B(x,r))^q \; h(2r)} \leq \gamma \; \Big\},$$
is a fine cover of $E.$ By the strong-Vitali property, there is a packing $\pi \subseteq \beta$
of $E$ such that  $\nu\Big(E \backslash \displaystyle\bigcup_{(x,r) \in \pi} B(x,r) \Big)=0$. Therefore,
\begin{eqnarray*}
\nu(E)&=&\nu \Big(E\bigcap \displaystyle\bigcup_{(x,r) \in \pi} B(x,r) \Big) \leq \sum_{\pi} \nu(B(x,r))\\
& \leq&  \gamma\;  \sum_{\pi}  \mu(B(x,r))^q \; h(2r).
\end{eqnarray*}
Thus  $\nu(E) \leq \gamma \PPP^{q,h}_{\Delta,\mu}(E)$ and, by arbitrariness of  $\Delta$, we obtain  $\nu(E) \leq \gamma \PPP^{q, h}_{\mu}(E).$
Since $\gamma $ is arbitrarily large then $\underline{D}^{q, h}_{\mu}(x,\nu)$  we get the desired result.\\
\item Since  $\mu\in {\mathcal P}_D(\X)$ and $h\in \F_0$, then, for small $r$, there exists two positive constants $C_1$ and $C_2$  such that
$$\mu(B(x, 3r))\le C_1 \mu(B(x,r))\quad \text{ and}  \quad h(6r) \le C_2 h (2r).$$
  Assume that $\sup_{x\in E} \underline{D}^{q, h}_{\mu}(x,\nu) < \infty.$ Let $\Delta$ be a gauge on $E$ and $\gamma< \infty$ such that
$\underline{D}^{q, h}_{\mu}(x,\nu) <\gamma $ for all $x \in E.$ We must show that, there exists a constant $C$ such that $\nu(E) \le \gamma \, C\,  \PPP^{q, h}_{\mu}(E)$, for this, we must show that $\nu(E)\le \gamma C\, \PPP^{q, h}_{\Delta,\mu}(E)$. We assume that  $\PPP^{q, t}_{\Delta,\mu}(E)< \infty$ and we consider the set
$$\beta =\Big\{ \; (x,r);x\in E,r< \Delta(x)\;\; \text{and}\;\; \frac{\nu(B(x,3r))}{\mu(B(x,3r))^q \; h(6r)} \leq \gamma \; \Big\}.$$
Since $\beta $ is a fine cover of $E$ and  $\PPP^{q, h}_{\Delta,\mu}(E)< \infty$, it follows, using Lemma \ref{vitali_metric}, that there exists a packing
$\{ \; (x_i,r_i)\}_i \subseteq \beta$  such that
$$E \subseteq \bigcup _{i=1}^{\infty}{B}(x_i,3r_i).
$$
Hence, if $h \in\mathcal{F}_0$ then,
\begin{eqnarray*}
\nu(E)&\leq& \sum_i\nu(B(x_i,3r_i)) \leq  \gamma \sum_i  \mu(B(x_i,3r_i))^q \; h(6r_i)\\
&\le &\gamma \begin{cases}
 C_1 C_2  \sum_i  \mu(B(x_i,r_i))^q \; h(2r_i) &; \;\; q >0\;  \text{and}\;  \mu\in {\mathcal P}_D(\X)\\
 &\\
  C_2  \sum_i  \mu(B(x_i,r_i))^q \; h(2r_i)  &; \;\; q\leq 0.\; \\
 \end{cases}
 \end{eqnarray*}
Take $C= \max( C_2, C_1 C_2)$ to get
 $$
 \nu (E)\leq  \gamma \; C \sum_i \mu(B(x_i, r_i))^q \; h(2r_i).
$$
Thus  $\nu(E)\leq \gamma\; C \PPP^{q, h}_{\Delta,\mu}(E)$. Since $\gamma $ is arbitrarily large then $\underline{D}^{q, h}_{\mu}(x,\nu)$  we get the desired result.
\end{enumerate}
\end{proof}

For a  Borel set $E\subseteq \X$ we denote by ${{\PPP}^{q,
h}_{\mu}}_{\llcorner E}$ the measure ${\PPP}^{q, h}_{\mu}$ restricted
to $E$. We can deduce also the following result.
\begin{corollary}\label{c1}
Let $(\X, \rho)$ be a metric space, $q\in \R,$  $h\in \F$, $\mu \in{\mathcal P}(\X)$ and $E$ be a Borel subset of $\supp \mu $ such that  $\PPP^{q, h}_{\mu}(E)< \infty$. Let $\nu = {\PPP^{q, h}_{\mu}}_{\llcorner E}$.
\begin{enumerate}
\item For $ {\PPP}_{\mu}^{q, h}$-a.a. $ x\in E$, we have $ \underline{D}^{q, h}_{\mu}(x, \nu) \leq 1 $.
\item  If $\nu$ has the strong-Vitali property,  then
$$ \underline{D}_{\mu}^{q, h}(x, \nu) =1, \qquad \PPP^{q, h}_{\mu}\text{-a.a. on } \; E.$$
\item  Assume that  $\mu\in {\mathcal P}_D(\X)$ and $h\in \F_0$,  then
$$1/C \le \underline{D}_{\mu}^{q, h}(x, \nu) \le 1, \quad \PPP^{q, h}_{\mu}-\text{a.a. on } E,$$
where $C$ is the constant defined in \eqref{desP_QQQ}.
\end{enumerate}

\end{corollary}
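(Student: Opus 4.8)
The plan is to derive all three statements as direct consequences of the density estimates \eqref{desP_Q}, \eqref{desP_RR} and \eqref{desP_QQQ} of Theorem \ref{densite}, applied to the tautological choice $\nu = {\PPP^{q,h}_\mu}_{\llcorner E}$. The crux is that for every Borel set $A \subseteq E$ one has $\nu(A) = \PPP^{q,h}_\mu(A)$, so that the factor $\nu(A)$ sitting on one side of each density inequality is literally $\PPP^{q,h}_\mu(A)$; since $\PPP^{q,h}_\mu(E) < \infty$, this common finite quantity can be cancelled, and a strict numerical factor then forces it to vanish. Before starting I would note that each inequality of Theorem \ref{densite} is homogeneous of degree one in $\nu$ (both $\underline{D}^{q,h}_\mu(x,\nu)$ and $\nu(A)$ scale linearly), so there is no loss in applying it to the finite measure $\nu$ rather than to a probability measure: if $\PPP^{q,h}_\mu(E)>0$ one normalises, and if $\PPP^{q,h}_\mu(E)=0$ every assertion is vacuous.

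For (1), I would fix $n \in \N$ and set $A_n = \{x \in E : \underline{D}^{q,h}_\mu(x,\nu) > 1 + \tfrac1n\}$. Applying \eqref{desP_Q} with $E$ replaced by $A_n$ gives $\PPP^{q,h}_\mu(A_n)\,\inf_{x\in A_n}\underline{D}^{q,h}_\mu(x,\nu) \le \nu(A_n) = \PPP^{q,h}_\mu(A_n)$, and since the infimum is at least $1+\tfrac1n$ while $\PPP^{q,h}_\mu(A_n) \le \PPP^{q,h}_\mu(E) < \infty$, this can only hold if $\PPP^{q,h}_\mu(A_n)=0$. Countable subadditivity then yields $\PPP^{q,h}_\mu\big(\{x\in E:\underline{D}^{q,h}_\mu(x,\nu)>1\}\big)=0$, which is exactly (1).

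Parts (2) and (3) are the mirror image, using the opposite inequalities. For (2), assuming $\nu$ has the strong-Vitali property, I would fix $n$ and set $B_n = \{x\in E:\underline{D}^{q,h}_\mu(x,\nu)<1-\tfrac1n\}$; then \eqref{desP_RR} applied to $B_n$ gives $\PPP^{q,h}_\mu(B_n)=\nu(B_n)\le \PPP^{q,h}_\mu(B_n)\,\sup_{x\in B_n}\underline{D}^{q,h}_\mu(x,\nu)\le \big(1-\tfrac1n\big)\PPP^{q,h}_\mu(B_n)$, forcing $\PPP^{q,h}_\mu(B_n)=0$ and hence $\underline{D}^{q,h}_\mu(x,\nu)\ge 1$ almost everywhere; combined with (1) this is the equality in (2). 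For (3), under $\mu\in\mathcal P_D(\X)$ and $h\in\F_0$ I would instead invoke \eqref{desP_QQQ} on the set $\{x\in E:\underline{D}^{q,h}_\mu(x,\nu)<\tfrac1C-\tfrac1n\}$; the same cancellation, now carrying the constant $C$, drives its measure to $0$ and gives $\underline{D}^{q,h}_\mu(x,\nu)\ge 1/C$ almost everywhere, which together with (1) yields $1/C\le \underline{D}^{q,h}_\mu(x,\nu)\le 1$.

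The one point that is not purely formal, and which I expect to be the main obstacle, is the Borel measurability of the density map $x\mapsto \underline{D}^{q,h}_\mu(x,\nu)$: this is needed so that the level sets $A_n$, $B_n$ are genuine Borel subsets of $\supp\mu$ and Theorem \ref{densite} may legitimately be applied to them. I would establish it by writing the $\liminf$ over a fixed countable sequence $r\searrow 0$ and checking that, for each such $r$, the maps $x\mapsto \nu(B(x,r))$ and $x\mapsto \mu(B(x,r))$ are measurable (via semicontinuity of the mass of closed balls in a separable metric space), so that the ratio and its lower limit are measurable. One must also note that the convention $0^q=\infty$ never intervenes in the denominator, since $\mu(B(x,r))>0$ for every $x\in\supp\mu$ and $r>0$, so the density is finite and well defined throughout $E$.
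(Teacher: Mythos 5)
Your proof is correct and follows essentially the same route as the paper: the paper likewise forms the level sets $F_m=\big\{x\in E:\underline{D}^{q,h}_{\mu}(x,\nu)>1+\tfrac1m\big\}$ and $\widetilde F_m=\big\{x\in E:\underline{D}^{q,h}_{\mu}(x,\nu)<1-\tfrac1m\big\}$ (with the analogous set below $1/C$ for part (3)), applies \eqref{desP_Q}, \eqref{desP_RR} and \eqref{desP_QQQ} to $\nu={\PPP^{q,h}_{\mu}}_{\llcorner E}$, cancels the common finite value $\nu(\cdot)=\PPP^{q,h}_{\mu}(\cdot)$, and concludes by countable union. Your two supplementary observations --- normalizing the finite measure $\nu$ so as to fit the probability-measure hypothesis of Theorem \ref{densite}, and the Borel measurability of $x\mapsto\underline{D}^{q,h}_{\mu}(x,\nu)$ needed to make the level sets legitimate inputs to that theorem --- are points the paper passes over silently, and they strengthen rather than alter the argument.
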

\begin{proof}
\begin{enumerate}
\item  Put the set $ F=\Big\{ x\in E;\;\; \underline{D}_{\mu}^{q, h}(x, \nu)>1\Big\},$ and for $m\in\mathbb{N}^*$
 $$
F_m=\left\{ x\in E;\;\; \underline{D}_{\mu}^{q, h}(x, \nu)>1+ \frac1m\right\}.
 $$
Therefore $\displaystyle\inf_{x\in F_m} \underline{D}_{\mu}^{q, h}(x, \nu)\ge 1+\frac1m$.  we deduce from   \eqref{desP_Q} that
 $$
\left(1+ \frac1m\right){\PPP}_{\mu}^{q, h}(F_m)\leq \nu(F_m) ={\PPP}_{\mu}^{q, h}(F_m).
 $$
This implies that ${\PPP}_{\mu}^{q, h}(F_m)=0$. Since  $F=\bigcup_m
F_m$, we obtain ${\PPP}_{\mu}^{q, h}(F)=0$, i.e.
\begin{equation}\label{333bBBB}
\underline{D}_{\mu}^{q, h}(x, \nu)\leq1 \quad \text{for} \quad {\PPP}_{\mu}^{q, h}\text{-a.a.}\; x\in E.
\end{equation}
\item Now consider the set $\widetilde F=\Big\{ x\in E;\;\;
\underline{D}_{\mu}^{q, h}(x, \nu)<1\Big\},$ and for $m\in\mathbb{N}^*$
 $$
\widetilde F_m=\left\{ x\in E;\;\; \underline{D}_{\mu}^{q, h}(x, \nu)<1-
\frac1m\right\}.
 $$
Using \eqref{desP_RR}, we clearly have
 $$
\nu (\widetilde F_m) ={\PPP}_{\mu}^{q, h}(\widetilde F_m)
\le \left(1-\frac1m\right){\PPP}_{\mu}^{q, h}(\widetilde F_m).
 $$
This implies that ${\PPP}_{\mu}^{q, h}(\widetilde F_m)=0$. Since
$F=\bigcup_m \widetilde F_m$, we obtain ${\PPP}_{\mu}^{q, h}(F)=0$,
i.e.
\begin{eqnarray}\label{333bNN}
\underline{D}_{\mu}^{q, h}(x, \nu)) \geq1 \quad \text{for} \quad {\PPP}_{\mu}^{q, h} \text{-a.a.}\; x\in E.
\end{eqnarray}
The statement in $(2)$ now follows from \eqref{333bBBB}  and
\eqref{333bNN}.
\item The proof of this statement is very similar to the statement
$(2)$ when we use the set $\widetilde F=\Big\{ x\in E;\;\;
\underline{D}_{\mu}^{q, h}(x, \nu)<1/C\Big\}$ and the inequality
\eqref{desP_QQQ} instead  of \eqref{desP_RR}.
\end{enumerate}

\end{proof}



\subsection{Proof of Theorem B}
Since any packing $\pi$ is a weak-pseudo-packing, we have the first inequality
$$\PPP_{\mu}^{q,h}(E) \leq r_{\mu}^{q,h}(E).$$
Now, we will prove  the converse inequality. Since $\X$ is amenable to packing we have, using \eqref{P-R2},
$$r_{\mu}^{q,h}(E) \leq K \PPP_{\mu}^{q,h}(E),$$
for some positive constant $K$.
It follows that   $\PPP_{\mu}^{q,h}(E) \leq r_{\mu}^{q,h}(E) \leq K \PPP_{\mu}^{q,h}(E)$  and then
 $$\PPP_{\mu}^{q,h}(E) =0 \Longleftrightarrow r_{\mu}^{q,h}(E) = 0\hspace{0.2cm} \text{and} \hspace{0.2cm} \PPP_{\mu}^{q,h}(E) =\infty \Longleftrightarrow r_{\mu}^{q,h}(E) = \infty.$$
Therefore, we may assume that $\PPP_{\mu}^{q,h}(E) < \infty$ and then, by Corollary  \ref{c1}, we have
$$\displaystyle\underline{D}_{\mu}^{~q,h}(x,~\nu)= 1\;\; \text{ for }\;\; \PPP_{\mu}^{q,h}\;\text{almost every } \;x \in E.$$
For $\beta<1$, we set
$$G_{k}= \Big\{  x \in E,~r\leq 1/k \Rightarrow \PPP_{\mu}^{q,h}(E \cap B(x,~r)) \geq \beta \mu(B(x,~r))^{q} h(2r) \Big\}$$
and let $G'_{k} = E \backslash G_{k}$. Therefore,
$$\underset{k}{\lim}~\PPP_{\mu}^{q,h}(G_{k}) = \PPP_{\mu}^{q,h}(E), \quad \underset{k}{\lim}~r_{\mu}^{q,h}(G_{k}) = r_{\mu}^{q,h}(E)$$
and
$$\underset{k}{\lim}~\PPP_{\mu}^{q,h}(G'_{k}) = 0 = \underset{k}{\lim}~r_{\mu}^{q,h}(G'_{k}).$$
For any $1/k$-weak-pseudo-packing $\pi$ of $G_{k}$,  we have
\begin{eqnarray*}
&& \displaystyle \sum_{(x,r) \in \pi}~\beta \mu(B(x,~r)^q h(2r)\\& \leq& \displaystyle \sum_{(x,r) \in \pi}~\PPP_{\mu}^{q,h}(E ~\cap ~B(x,r)) \\
 &\le& \displaystyle \sum_{(x,r) \in \pi}~\PPP_{\mu}^{q,h}(G_{k} ~\cap~ B(x,r)) + \displaystyle \sum_{(x,r) \in \pi}~\PPP_{\mu}^{q,h}(G'_{k} ~\cap~ B(x, r)).
 \end{eqnarray*}
As $\pi$ is a weak-pseudo-packing of $G_{k}$,  the $(G_{k} \cap B(x,r))$'s are disjoint, therefore
$$\displaystyle \sum_{(x,r) \in \pi}~\PPP_{\mu}^{q,h}(G_{k} ~\cap ~B(x,r)) \leq \PPP_{\mu}^{q,h}(G_{k}).$$
But, the $(G'_{k}~\cap~B)$'s may overlap. Therefore, since $\X$ is amenable to packing, we have
$$\displaystyle \sum_{(x,r) \in \pi}~\PPP_{\mu}^{q,h}(G'_{k} ~\cap ~B(x,r)) \leq K \PPP_{\mu}^{q,h}(G'_{k})$$
and so
$$\beta r_{\mu}^{q,h}(G_{k}) \leq  \PPP_{\mu}^{q,h}(G_{k}) + K \PPP_{\mu}^{q,h}(G'_{k}).$$
Letting $k \rightarrow \infty$ we get
$$ \beta r_{\mu}^{q,h}(E)  \leq \PPP_{\mu}^{q,h}(E).$$
Since $\beta < 1$ was arbitrary, the proof is complete.

\subsection{Proof of Theorem C}
We may assume that ${\QQQ}_{\mu}^{q, h}(E) < \infty$ and ${\PPP}_{\nu}^{q, g}(F) < \infty$. For $\varepsilon>0$, we choose sequences of sets $\lbrace E_{i}\rbrace_{i \geq 1}$ and $\lbrace F_{j}\rbrace_{j \geq 1}$ such that
$$E\subseteq \bigcup_{i=1}^{\infty} E_i \quad \text{and}\quad \sum_{i=1}^{\infty}  \QQQ_{\mu, 0}^{q, h}(E_i) \le  \QQQ_\mu^{q, h}(E) +\epsilon$$
$$F\subseteq \bigcup_{j=1}^{\infty} F_j \quad \text{and}\quad \sum_{j=1}^{\infty}  \PPP_{\mu, 0}^{q, h}(F_i) \le  \PPP_\mu^{q, h}(F) +\epsilon.$$
Now, we will prove that
\begin{equation} \label{eq_Q}
{\PPP}_{\mu\times \nu,\delta}^{q, hg}(E\times F) \le {\QQQ}_{\mu, \delta}^{q, h}(E) \;  {\PPP}_{\nu,\delta}^{q, g}(F).
\end{equation}
Let $\delta >0$ and $l< {\PPP}_{\mu\times \nu,\delta}^{q, hg}(E\times F)$. Choose $\lbrace (x_{i},~y_{i}),r_{i}\rbrace_{i}$ a $\delta$-packing of $E\times F$ such that
\begin{equation}
\sum_{i=1}^{\infty} \mu(B(x_i, r_i))^q \nu(B(y_i,r_i ))^qh(2r_i)g(2r_i) > l.
\end{equation}
Let $N, \eta \in \R$ and, for each $i=1, 2, \ldots,$
$$a_i = N  \mu(B(x_i, r_i))^q h(2r_i) -\eta \quad \text{and} \quad b_i = N  \nu(B(y_i, r_i))^q g(2r_i) -\eta.
$$
We can choose $N$  big enough and $\eta $ small enough, so that
$$
\sum_{i=1}^{\infty} \frac{a_i b_i}{N^2} >l.
$$
In addition, by relabelling and choosing $n$ sufficiently large we may assume that $\sum_{i=1}^{n} \frac{a_i b_i}{N^2} >l$, with $a_i>0$ and $b_i>0$. Let $x\in E$, then
$$\Big\{ (y_i, r_i),\;\; \rho(x_i, x)\le r_i\Big\}
$$
is a $\delta$-packing of $F$. It follows that
\begin{eqnarray*}
\sum_i \{ b_i,\;\;  \rho(x_i, x)\le r_i \}& \le& \sum_i N  \nu(B(y_i, r_i))^q g(2r_i)\\
&\le & N \PPP_{\nu,\delta}^{q,g}(F).
\end{eqnarray*}
Thus, $(b_{i}/N,~x_{i},~r_{i})$ is a weighted $\delta$-packing of $E$. Hence (\ref{eq_Q}) follows. Therefore,  for all $i,j=1,2, \cdots$
\begin{equation}
{\PPP}_{\mu\times \nu, 0}^{q, hg}(E_{i}\times F_{j}) \le {\QQQ}_{\mu, 0}^{q, h}(E_{i}) \;  {\PPP}_{\nu, 0}^{q, g}(F_{j}).
\end{equation}
Thus summing over $i$ and $j$, we have
 \begin{eqnarray*}
\sum_{i,j} \PPP_{\mu \times \nu, 0}^{q,h g}(E \times F) & \le& \sum_{i,j} \QQQ_{\mu,0}^{q,h}(E_{i}) \PPP_{\nu,0}^{q,g}(E_{i})\\
&\le &  \left( \QQQ_\mu^{q, h}(E) +\epsilon \right) \left( \PPP_\mu^{q, h}(E) +\epsilon \right).
\end{eqnarray*}
The result follows on letting $\epsilon \rightarrow 0$.





\section{appendix}\label{appendix}
 \begin{definition}
 A set $G$ is said to be a maximal pseudo-packing from ${\mathcal A} = \big\{ (x, r), \; x\in \X, \;  r>0\big\}$ if, and only if,
 \begin{itemize}
 \item $G\subseteq {\mathcal A} $,
 \item for all $(x, r)\neq (y,s) \in G$ we have $\rho(x, y)> \max\{r,s\}$,
 \item for all $(x,r)\in {\mathcal A}$, there exists $(y,s)\in G$ such that $\rho(x, y) \le  \max\{r,s\}$.
  \end{itemize}
  That is, the set $G$ is a maximal subset of $\mathcal A$ such that $G$ is a pseudo-packing of $\big\{x,  (x, r)\in{\mathcal A}\big\}$.
  \end{definition}
  \begin{lemma}[lemma 4, \cite{howroyd96}]\label{maxi}
  Let $\delta>0$, $E\subseteq \X$ and ${\mathcal A} $ be the family of pairs $\big\{ (x, \delta);\;\; x\in E\big\}$. Then, there exists $G$ a maximal pseudo-packing from ${\mathcal A}$ that covers $E$.
  \end{lemma}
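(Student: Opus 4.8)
The plan is to realize $G$ as a maximal $\delta$-separated subset of $E$. Since every pair in $\mathcal{A}$ carries the same radius $\delta$, the condition $\rho(x,y)>\max\{r,s\}$ defining a pseudo-packing collapses to the requirement that distinct chosen centres lie at distance strictly greater than $\delta$, while the covering clause asks that every point of $E$ be within distance $\delta$ of some chosen centre. Thus the whole statement is equivalent to producing a subset $S\subseteq E$ with $\rho(x,y)>\delta$ for all distinct $x,y\in S$ that is maximal with respect to this property, and then taking $G=\{(x,\delta):x\in S\}$.

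First I would introduce the collection $\mathcal{S}$ of all subsets $S\subseteq E$ satisfying $\rho(x,y)>\delta$ whenever $x,y\in S$ are distinct, partially ordered by inclusion. The key verification is that $\mathcal{S}$ is closed under unions of chains: if $(S_\lambda)_\lambda$ is a chain in $\mathcal{S}$ and $x,y$ are two distinct points of $\bigcup_\lambda S_\lambda$, then both belong to a single $S_\lambda$ (as the family is totally ordered), whence $\rho(x,y)>\delta$. Hence every chain has an upper bound in $\mathcal{S}$, and Zorn's lemma furnishes a maximal element $S$.

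It then remains to check the three defining properties of a maximal pseudo-packing from $\mathcal{A}$. The inclusion $G\subseteq\mathcal{A}$ and the separation condition are immediate from $S\subseteq E$ and from $S\in\mathcal{S}$. The covering property is precisely where maximality is used: were there some $x\in E$ with $\rho(x,y)>\delta$ for every $y\in S$, then $S\cup\{x\}$ would again lie in $\mathcal{S}$ and strictly contain $S$, contradicting maximality; therefore each $x\in E$ admits $y\in S$ with $\rho(x,y)\le\delta$, which is exactly the covering clause since all radii equal $\delta$ (equivalently, the closed balls $B(y,\delta)$, $y\in S$, cover $E$). Finally, to match the sequential indexing used throughout the paper I would observe that $E$, as a subspace of the separable space $\X$, is separable, and any $\delta$-separated set is countable: the open balls of radius $\delta/2$ centred at the points of $S$ are pairwise disjoint, and in a separable space such a disjoint family is countable. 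Hence $G$ may be enumerated as a sequence.

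The entire content is the observation that maximality for the separation relation forces the covering property; there is no genuine obstacle here. The only point worth flagging is that, unlike in $\R^d$, no compactness is available, so the existence of $S$ must be secured by Zorn's lemma (rather than by a finite extraction), with separability supplying the countability needed for the constructions elsewhere in the paper.
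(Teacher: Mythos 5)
Your proof is correct. The paper itself gives no argument for this lemma (it is quoted verbatim from Howroyd's Lemma 4), and your reduction of the constant-radius case to a maximal $\delta$-separated subset of $E$, obtained by Zorn's lemma with maximality forcing the covering clause, is exactly the standard proof; your closing observation that separability of $\X$ makes any $\delta$-separated set countable (disjoint open balls of radius $\delta/2$ each meeting a countable dense set) correctly supplies the sequential enumeration that the paper's definition of a pseudo-packing presupposes.
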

   \begin{lemma}[lemma 5, \cite{howroyd96}]\label{maxi_fin}
Let $E\subset \X$ and let $F$ be a family containing finitely many pairs $(x, r)$ with $x\in \X$ and $r>0$ such that $E\subseteq \big\{x ;  (x, r) \in F \, \text{for some }\, r>0 \big\}$. Then, there exists $G$ a maximal pseudo-packing from $F$ that covers $E$.
  \end{lemma}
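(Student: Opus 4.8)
The plan is to construct $G$ explicitly by a finite greedy procedure that orders the pairs of $F$ by non-increasing radius, rather than to invoke an abstract maximality argument. Since $F$ contains only finitely many pairs, I would write $F=\{(x_1,r_1),\ldots,(x_m,r_m)\}$ with $r_1\ge r_2\ge\cdots\ge r_m$, and build $G$ inductively: process the pairs in this order and place $(x_i,r_i)$ into $G$ exactly when $\rho(x_i,y)>\max\{r_i,s\}$ for every pair $(y,s)$ already selected. By construction $G\subseteq F$, and since the inequality defining a pseudo-packing is symmetric in the two pairs, any two distinct members of $G$ satisfy $\rho(x,y)>\max\{r,s\}$. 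Thus the first two clauses of the definition of a maximal pseudo-packing from $F$ hold automatically.

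Next I would verify maximality, i.e. the third clause. Let $(x,r)\in F$. If $(x,r)\in G$, take $(y,s)=(x,r)$, so that $\rho(x,y)=0\le\max\{r,s\}$. If $(x,r)\notin G$, then at the step where it was examined it was rejected, which means some already selected $(y,s)\in G$ satisfies $\rho(x,y)\le\max\{r,s\}$; this is precisely the required inequality. Hence $G$ is a maximal pseudo-packing from $F$.

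It remains to show that $G$ covers $E$, and this is the only delicate point: an inclusion-maximal pseudo-packing need \emph{not} cover $E$ in general (for instance, with $F=\{(x,1),(y,10)\}$ and $\rho(x,y)=5$, the set $\{(x,1)\}$ is maximal yet contains $y$ in none of its balls), so the non-increasing ordering of the radii is essential. Fix $x\in E$; by hypothesis there is some $r>0$ with $(x,r)\in F$. If $(x,r)\in G$, then $x\in B(x,r)$. Otherwise $(x,r)$ was rejected because of some $(y,s)\in G$ chosen earlier in the ordering; since the pairs are processed by non-increasing radius, $s\ge r$, whence $\max\{r,s\}=s$ and the rejection inequality $\rho(x,y)\le\max\{r,s\}$ sharpens to $\rho(x,y)\le s$, i.e. $x\in B(y,s)$. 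Thus every point of $E$ lies in some ball of $G$.

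The main obstacle, as highlighted above, is securing the covering property: the third clause of maximality only yields $\rho(x,y)\le\max\{r,s\}$, which is too weak, and it is exactly the decreasing-radius order that upgrades this to $\rho(x,y)\le s$. The rest of the argument consists of the routine verifications of the pseudo-packing and maximality clauses, which are immediate from the greedy construction.
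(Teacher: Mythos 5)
Your greedy construction is correct: processing the finitely many pairs in non-increasing order of radius and keeping a pair exactly when it is strictly separated from all previously kept ones yields the three clauses of maximality directly, and your observation that the rejection inequality $\rho(x,y)\le\max\{r,s\}$ upgrades to $\rho(x,y)\le s$ precisely because $s\ge r$ is the key point that secures the covering property (your two-ball example rightly shows an arbitrary maximal pseudo-packing need not cover). The paper itself gives no proof, quoting Lemma 5 of Howroyd, and your argument is essentially the standard one behind that lemma, so there is nothing to flag.
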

\begin{theorem}[Besicovitch covering Theorem]\label{BCT}\cite{Mat}.\\
There exists an integer $\xi \in\N$ such that, for any subset $A$ of
$\R^n$ and any sequence $(r_x)_{x\in A}$ satisfying
\begin{enumerate}
\item $r_x>0$, \quad $\forall\; x\in A$,
\item $\displaystyle\sup_{x\in A} r_x < \infty$.
\end{enumerate}

Then, there exists $\gamma$ countable finite families $B_1, \ldots,
B_\gamma$ of $\big\{ B_x(r_x), \;\; x\in A \big\}$, such that
\begin{enumerate}
\item $A\subset \bigcup_i \bigcup_{B\in B_i} B$.
\item $B_i$ is a family of disjoint sets.
\end{enumerate}
\end{theorem}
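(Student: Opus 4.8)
The plan is to run the classical Besicovitch selection argument in $\R^n$, whose only genuinely Euclidean ingredient is the sphere-packing count already used in \eqref{case_Besi}; everything else is a greedy construction followed by a colouring.

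First I would select balls greedily. Put $A_1=A$ and, having chosen centres $a_1,\ldots,a_{k-1}$, set $A_k=A\setminus\bigcup_{j<k}B(a_j,r_{a_j})$; if $A_k\neq\emptyset$ pick $a_k\in A_k$ with $r_{a_k}>\tfrac12\sup\{r_x:x\in A_k\}$. Writing $r_k=r_{a_k}$ and $B_k=B(a_k,r_k)$, the process exhausts the (at most countably many) centres, so that $A\subseteq\bigcup_k B_k$. By construction $a_k\notin B_j$ for $j<k$, hence $|a_k-a_j|>r_j$; moreover, since $a_k\in A_j$ whenever $j<k$, the choice of $r_j$ forces $r_j>\tfrac12 r_k$. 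From these two facts the suitably shrunken balls $B(a_k,r_k/3)$ are pairwise disjoint.

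The heart of the matter is a bounded-overlap estimate: there is an integer $P=P(n)$ such that every $z\in\R^n$ lies in at most $P$ of the balls $B_k$. Suppose $z\in B_{k_1}\cap\cdots\cap B_{k_m}$. For any two indices $i<l$ among them one has $a_{k_l}\notin B_{k_i}$, so $|a_{k_i}-a_{k_l}|>r_{k_i}$, while $|z-a_{k_i}|\le r_{k_i}$ and $|z-a_{k_l}|\le r_{k_l}<2r_{k_i}$; an elementary estimate via the law of cosines then bounds the angle at $z$ between the directions towards $a_{k_i}$ and $a_{k_l}$ below by a fixed $\theta_0>0$, in the spirit of the $60^\circ$ argument recorded just after \eqref{case_Besi}. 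Compactness of the unit sphere $S^{n-1}$ bounds the number of such directions, hence $m\le P(n)$. This is precisely where the statement uses that the ambient space is $\R^n$ rather than an arbitrary metric space, and I expect it to be the main obstacle.

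Finally I would convert the bounded overlap into disjoint families by a greedy colouring. Enumerate the selected balls and assign to $B_k$ the least colour carried by no earlier ball that meets $B_k$. Any earlier $B_j$ meeting $B_k$ satisfies $r_j>\tfrac12 r_k$ and $|a_j-a_k|<r_j+r_k<3r_j$, so the disjoint balls $B(a_j,r_j/3)$ all lie in a bounded neighbourhood of $a_k$; a volume (equivalently, the same sphere) count bounds their number, whence only boundedly many colours, say $\gamma\le\xi(n)$, are ever used. The colour classes are the desired pairwise disjoint subfamilies $B_1,\ldots,B_\gamma$, and since every selected ball receives a colour they jointly cover $A$. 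Once the overlap bound of the previous paragraph is in hand, this colouring step is routine combinatorics.
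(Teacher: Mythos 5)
Your overall skeleton (greedy selection with the radius-halving rule, a bounded-overlap/bounded-degree count, then a greedy colouring) is the right classical route; note that the paper itself offers no proof at all --- it quotes the theorem from \cite{Mat} --- so the comparison is with the standard Besicovitch argument you are reconstructing. Against that argument your sketch has two genuine gaps. First, coverage: the $\omega$-indexed greedy process need not cover an unbounded $A$. Take $A=\R$ and $r_x=\tfrac12$ for all $x$; your rule permits the choices $a_k=k$, and then every point of $(-\infty,\tfrac12)$ lies in $A_k$ for all $k$, so $A\not\subseteq\bigcup_k B_k$. Moreover ``the process exhausts the centres'' is not the mechanism even for bounded $A$, since $A$ may be uncountable: there, coverage holds because the disjoint balls $B(a_k,r_k/3)$ sit in a fixed bounded set, forcing $r_k\to0$, which contradicts the halving rule at any uncovered point $x$ with $r_x>0$ once $2r_k<r_x$. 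To handle general $A$ you must either run the selection transfinitely (countability of the selected family then follows from separability and the disjointness of the shrunken balls) or perform the standard reduction: partition $A$ into pieces of diameter comparable to $\sup_{x\in A}r_x$, apply the bounded case to each piece, and merge the colour classes of pieces that are far apart.

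Second, and more seriously, the two counting claims fail as stated. The pairwise angle bound is false: take $z=0$, $a_{k_i}=0.1\,e_1$ with $r_{k_i}=1$, and $a_{k_l}=1.5\,e_1$ with $r_{k_l}=1.5$. Then $|a_{k_i}-a_{k_l}|=1.4>r_{k_i}$, $z$ lies in both balls, and $r_{k_l}<2r_{k_i}$, i.e.\ every hypothesis you list holds, yet the two directions from $z$ coincide (angle $0$); the $60^{\circ}$ lemma \cite[Lemma 2.5]{Mat} needs the two-sided separation $|a_{k_i}-a_{k_l}|\geq\max\bigl(|z-a_{k_i}|,|z-a_{k_l}|\bigr)$, which your one-sided condition does not provide. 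Similarly, your colouring count collapses: an earlier ball $B_j$ meeting $B_k$ satisfies only $r_j>r_k/2$, with no upper bound on $r_j$, so the disjoint balls $B(a_j,r_j/3)$ are not contained in any ball of radius $O(r_k)$ and the volume comparison bounds nothing; and a pointwise overlap bound alone does not bound the chromatic number of the intersection graph, since pairwise intersecting balls need not share a common point. The genuine proof (Mattila's Theorem 2.7, or Evans--Gariepy's treatment) repairs both counts by a case split that your sketch compresses away: earlier balls meeting $B_k$ of comparable radius (say $r_j\leq 3r_k$) are counted by a true packing/volume argument, while the much larger ones are counted by showing the directions from $a_k$ to their centres are uniformly angle-separated, using both exclusions $a_k\notin B_j$, $a_j\notin B_i$ and carefully chosen numerical constants. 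That case analysis is the actual content of the theorem, and without it the proposal does not close.
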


\newpage

\end{document}